\newcommand{\bm}[1]{\boldsymbol{#1}}
\newcommand{\bmr}[1]{\bm{\mr{#1}}}
\newcommand{\lj}{[ \hspace{-2pt} [}
\newcommand{\rj}{] \hspace{-2pt} ]}
\newcommand{\mb}[1]{\mathbb{#1}}
\newcommand{\mc}[1]{\mathcal{#1}}
\newcommand{\mr}[1]{\mathrm{#1}}
\newcommand{\jump}[1]{\lj #1 \rj}
\newcommand{\aver}[1]{ \{#1\}  }
\newcommand{\tr}[1]{\ifmmode \mathrm{tr}\left( #1 \right) \else 
\text{tr} \left( #1 \right) \fi }
\renewcommand{\d}[1]{\mathrm d \boldsymbol{#1}}
\newcommand\DGnorm[1]{\| #1 \|_{{\mathrm{DG}}}}
\newcommand\DGenorm[1]{|\!|\!| #1 |\!|\!|_{{\mathrm{DG}}}}
\newcommand\DGsnorm[1]{ \| #1 \|_{*}}
\newcommand\DGesnorm[1]{ |\!|\!| #1 |\!|\!|_{*}}
\newcommand\comment[1]{}
\def\MTh{\mc{T}_h}
\def\MThi{\mc{T}_h^i}
\def\MThic{\mc{T}_h^{i, \circ}}
\def\MTho{\mc{T}_h^0}
\def\MThoc{\mc{T}_h^{0, \circ}}
\def\MThl{\mc{T}_h^1}
\def\MThlc{\mc{T}_h^{1, \circ}}
\def\MEh{\mc{E}_h}
\def\MEhb{\mc{E}_h^b}
\def\MEhc{\mc{E}_h^\circ}
\def\MEho{\mc{E}_h^0}
\def\MEhoc{\mc{E}_h^{0, \circ}}
\def\MEhl{\mc{E}_h^1}
\def\MEhlc{\mc{E}_h^{1, \circ}}
\def\MThG{\mc{T}_h^{\Gamma}}
\def\MEhG{\mc{E}_h^{\Gamma}}
\def\un{\bm{\mr{n}}}
\newtheorem{assumption}{Assumption}
\newtheorem{theorem}{Theorem}
\newtheorem{lemma}{Lemma}
\newtheorem{remark}{Remark}
\begin{document}

\title
{An unfitted finite element method by direct
extension for elliptic problems on domains with curved boundaries
and interfaces}

\author
{Fanyi Yang \thanks{ Email: yangfanyi@scu.edu.cn } }
\author
{Xiaoping Xie \thanks{Corresponding author. Email: xpxie@scu.edu.cn} }

\affil{School of Mathematics, Sichuan University, Chengdu 610064, China}
\date{}

\maketitle

\begin{abstract}
  We propose and analyze an unfitted finite element method for solving
  elliptic problems on domains with curved boundaries and
  interfaces.  The approximation space on the whole domain is obtained
  by the direct extension of the finite element space defined on
  interior elements, in the sense that there is no degree of freedom
  locating in boundary/interface elements. The boundary/jump
  conditions  are imposed in a weak sense in the scheme.  The method
  is shown to be stable without any mesh adjustment or any special
  stabilization. Optimal convergence rates under the $L^2$ norm and
  the energy norm are derived.  Numerical results in both two and
  three dimensions are presented to illustrate the accuracy and the
  robustness of the method.

\noindent \textbf{keywords}: elliptic   problems; curved boundary;
 interface problems; finite element method; Nitsche's method;
unfitted mesh
\end{abstract}

\section{Introduction}
\label{sec_introduction}
In recent two decades,  unfitted finite element methods have become
widely used tools in the numerical analysis of problems with
interfaces  and  complex geometries 
\cite{babuska2011Stable,belytschko2009review,Bordas2017geometrically,
Gurkan2019stabilized, Hansbo2002unfittedFEM,Lehrenfeld2018analysis,
lizhilin1998immersed,lin2015partially,
strouboulis2000design,Thomas2010The}. 
For such kinds of problems, the generation of the body-fitted meshes
is usually a very {challenging} and
time-consuming task, especially in three dimensions. 
The unfitted  methods avoid the task to generate  high quality meshes
for representing the domain geometries accurately, due to the use of
meshes independent of the interfaces and  domain boundaries and   the
use of certain enrichment of finite element basis functions
characterizing the solution singularities or discontinuities.

%

%

In \cite{Hansbo2002unfittedFEM}, Hansbo and Hansbo proposed an
unfitted finite element method for  elliptic interface problems. The
numerical solution comes from two separate linear finite element
spaces and the jump conditions are weakly enforced by Nitsche's
method. This idea has been a popular discretization for interface
problems and has also been applied to many other interface problems,
see \cite{Burman2012ficticious,Burman2014fictitious, Hansbo2014cut}
and the references therein for further advances. This method can also
be written into the framework of  extended finite element method by a
Heaviside enrichment
\cite{Areias2006letter,belytschko2009review,Thomas2010The}.  We note
that for penalty methods, the small cuts of the mesh have to be
treated carefully, which may adversely effect the conditioning of the
method and even hamper the convergence \cite{Burman2021unfitted,
Prenter2018note}.  In \cite{Johansson2013high}, Johansson and Larson
proposed an unfitted high-order discontinuous Galerkin method on
structured grids, where they constructed large extended elements to
cure the issue of the small cuts and obtain the stability near the
interface. Similar ideas of merging elements for interface problems
can also be found in \cite{Burman2021unfitted,Huang2017unfitted,
Li2018interface}.  Another popular unfitted method is the cut finite
element method \cite{Burman2015cutfem}, which is a variation of the
extended finite element method. This method involves the ghost penalty
technique \cite{Burman2010ghost} to guarantee the stability of the
scheme. In addition, Massing and G\"urkan develop a framework
combining the cut finite element method and the discontinuous Galerkin
method \cite{Gurkan2019stabilized}. We refer to
\cite{Bordas2017geometrically, Burman2015cutfem, Burman2020stable,
Gurkan2020stabilized, 
HanChenWangXie2019Extended,Massing2014stabilized} and the references
therein for some recent applications of the cut finite element method.
In \cite{Lehrenfeld2016high}, Lehrenfeld introduced a high order
unfitted finite element method based on isoparametric mappings, where
the piecewise interface is mapped approximately onto the zero level
set of a high-order approximation of the level set function. We refer
to \cite{
Lehrenfeld2018analysis} for an analysis
of more details to this method.

In this article, we propose a new unfitted finite element method for
second order elliptic problems on domains with curved boundaries
and  interfaces. The novelty of this method lies in that the
approximation space is obtained by the direct extension of a common
finite element space.  We first define a standard finite element space
on the set of all interior elements which are not cut by the domain
boundary/interface. Then an extension operator is introduced for this
space. This operator defines the polynomials on cut elements by
directly extending the polynomials defined on some interior
neighbouring elements. Then the approximation
space is obtained from the extension operator. 
In the discrete schemes, a
symmetric interior penalty method is adopted, and the boundary/jump
conditions on the interface are weakly satisfied by Nitsche's method.
We derive optimal error estimates   under the energy norm and
the $L^2$ norm, and we give upper bounds of the condition numbers of the final linear
systems. The curved boundary and the interface are allowed to intersect
the mesh arbitrarily in our method.  We note that the idea of
associating elements that have small intersections with neighbouring
interior elements can also be found in, for example,
\cite{Johansson2013high, Gurkan2019stabilized, Huang2017unfitted}. But
different from the previous methods,   the proposed method has no
degrees of freedom locating in cut elements, and does not need  any
mesh adjustment or  extra stabilization mechanism. The implementation
of our method is very simple and the method can easily achieve
high-order accuracy.  We conduct a series of numerical experiments in
two and three dimensions to illustrate the convergence behaviour.

The rest of this article is organized as follows. In Section
\ref{sec_preliminaries}, we introduce   notations   and   prove some
basic properties for the approximation space.  We show the unfitted
finite element method for   the elliptic  problem on a curved domain
and  the elliptic interface problem in Sections \ref{sec_elliptic} and
\ref{sec_interface}, respectively,  {and we} derive  
optimal error estimates, and  give    upper bounds of the condition
numbers of the discrete systems.  
In Section \ref{sec_numericalresults}, we perform some numerical tests
to   confirm the optimal convergence rates and show the robustness of
the proposed method. Finally, we make a conclusion in Section
\ref{sec_conclusion}.

\section{Preliminaries} 
\label{sec_preliminaries}
Let $\Omega \subset \mb{R}^d(d = 2, 3)$ be a convex polygonal
(polyhedral) domain with boundary $\partial \Omega$. Let $\Omega_0
\Subset \Omega$ be an open subdomain with $C^2$-smooth or convex
polygonal (polyhedral) boundary. We denote by $\Gamma := \partial
\Omega_0 $ the topological boundary. Let $\MTh$ be a background mesh
which is 
a quasi-uniform 
triangulation of the domain
$\Omega$ into simplexes (see Fig.~\ref{fig_MThMTh0} for the example
that $\Gamma$ is a circle). We denote by $\MEh$ the collection of all
$d - 1$ dimensional faces in $\MTh$. We further decompose $\MEh$ into
$\MEh = \MEhb \cup \MEhc$, where $\MEhb$ and $\MEhc$ consist of
boundary faces and interior faces, respectively.  For any element $K
\in \MTh$ and any face $e \in \MTh$, we denote by $h_K$ and $h_e$ 
their diameters, respectively. The mesh size $h$ is defined as $h :=
\max_{K \in \MTh} h_K$. The quasi-uniformity of $\MTh$ is in the sense
of that there exists a constant $C$ such that $h \leq C\rho_K$ for any
element $K$, here $\rho_K$ is the radius of the largest ball inscribed
in $K$.

Since $\Omega_0$ can be a curved domain, we set (see Fig.~\ref{fig_MThMTh0})
\begin{displaymath}
  \MTho := \{ K \in \MTh \ | \ K \cap \Omega_0 \neq \varnothing\},
  \quad \MThoc :=  \{ K \in \MTho \ | \ K \subset \Omega_0 \}.
\end{displaymath}
Clearly, $\MTho$ is the minimal subset of $\MTh$ that just covers the
domain $\overline{\Omega}_0$, and $\MThoc$ is the set of elements
which are inside the domain $\Omega_0$.
\begin{figure}[htp]
  \centering
  \begin{minipage}[t]{0.46\textwidth}
    \centering
    \begin{tikzpicture}[scale=2.2]
      \input{figure/MTh.tex}
      \draw[blue, thick] (0, 0) circle [radius=0.6]; 
    \end{tikzpicture}
  \end{minipage} 
  \begin{minipage}[t]{0.46\textwidth}
    \centering
    \begin{tikzpicture}[scale=2.2]
      \input{figure/MTh0.tex}
      \draw[blue, thick] (0, 0) circle [radius=0.6]; 
    \end{tikzpicture}
  \end{minipage}
  \caption{The background mesh $\MTh$ (left) / the mesh $\MTh^0$ that
  covers the domain $\Omega_0$ (right) and  the elements in $\MThG$
  (orange).}
  \label{fig_MThMTh0}
\end{figure}
For the set $\MEh$, we let
\begin{displaymath}
  \MEho := \{ e \in \MEh \ | \ e \cap \Omega_0 \neq \varnothing\}, 
  \quad \MEhoc :=  \{ e \in \MEho \ | \ e \subset \Omega_0 \}
\end{displaymath}
be the sets of $d-1$ dimensional faces corresponding to $\MTho$ and
$\MThoc$, respectively.  Moreover, we denote by $\MThG$ and $\MEhG$
the sets of the elements and faces that are cut by $\Gamma$ (see
Fig.~\ref{fig_MThMTh0}), respectively: 
\begin{displaymath}
  \MThG := \{K \in \MTh \ | \ K \cap \Gamma \neq \varnothing \}, 
  \quad \MEhG := \{ e \in \MEh \ | \ e \cap \Gamma \neq \varnothing
  \}.
\end{displaymath}
Obviously, we have $\MThG = \MTho \backslash \MThoc$ and $\MEhG =
\MEho \backslash \MEhoc$. For any element $K \in \MThG$, we define
the curve $\Gamma_K := K \cap \Gamma$. 

We make following natural geometrical assumptions on the background
mesh: 
\begin{assumption}
  For any cut face $e \in \MEhG$, the intersection $e \cap \Gamma$ is
  simply connected; that is, $\Gamma$ does not cross a face multiple
  times. 
  \label{as_mesh1}
\end{assumption}
\begin{assumption}
  For any element $K \in \MThG$, there is an element $K^{\circ}
  \in \Delta(K) \cap \MThoc$, where $\Delta(K)
  := \{ K' \in \MTh \ | \ \overline{K'} \cap \overline{K} \neq
  \varnothing\}$ denotes the set of elements that touch $K$.
  \label{as_mesh2}
\end{assumption}
\begin{remark}
The above assumptions are widely used in   interface problems
\cite{Hansbo2002discontinuous, Massing2014stabilized, Wu2012unfitted},
which ensure the curved boundary $\Gamma$ is well-resolved by the
mesh.  We note that if the mesh is fine enough,  Assumptions
\ref{as_mesh1} and \ref{as_mesh2} can always be fulfilled. 
\end{remark}

From the quasi-uniformity of the mesh, there
exists a constant $C_{\Delta}$ independent of $h$ such that for any
element $K \in \MTh$, there is a ball $B(\bm{x}_K, C_\Delta h_K)$
satisfying $\Delta(K) \subset B(\bm{x}_K, C_\Delta h_K)$, where
$\bm{x}_K$ is the barycenter of $K$ and $B(\bm{z}, r)$ denotes the
ball centered at $\bm{z}$ with radius $r$. Moreover, 
let  $\Omega^*$ be an open bounded domain, independent of the mesh size $h$ and
$\Gamma$, which includes the union of
all balls $B(\bm{x}_K, C_\Delta h_K)$ $(\forall K \in \MTh)$, that is,
$B(\bm{x}_K, C_\Delta h_K) \in \Omega^*$ for any $K \in \MTh$.

Next, we introduce the     jump and   average operators   which are
widely used in the discontinuous Galerkin framework.  Let $e \in
\MEhc$ be any interior face   shared by two neighbouring elements
$K^+$ and $K^-$, with the unit outward normal vectors $\un^+$ and
$\un^-$ along $e$, respectively. For any piecewise smooth
scalar-valued function $v$ and  piecewise smooth vector-valued
function $\bm{q}$, the jump operator $\jump{\cdot}$ is defined as 
\begin{displaymath}
  \jump{v}|_e := (v|_{K^+})|_e \un^+ + (v|_{K^-})|_e \un^-, \quad
  \jump{\bm{q}}|_e := (\bm{q}|_{K^+})|_e \cdot \un^+ +( \bm{q}|_{K^-})|_e \cdot
  \un^-, 
\end{displaymath}
and the average operator $\aver{\cdot}$ is defined as 
\begin{displaymath}
  \aver{v}|_e := \frac{1}{2}\left((v|_{K^+} )|_e+ (v|_{K^-})|_e\right), \quad
  \aver{\bm{q}}|_e := \frac{1}{2}\left((\bm{q}|_{K^+})|_e +( \bm{q}|_{K^-})|_e\right). 
\end{displaymath}
On a boundary face $e \in \MEhb$  
with the unit outward normal vector $\un$, we define 
\begin{displaymath}
  \aver{v}|_e := v|_e, \quad \jump{v}|_e := v|_e \un, \quad
  \aver{\bm{q}}|_e := \bm{q}|_e, \quad \jump{\bm{q}}|_e := \bm{q}|_e \cdot
  \un.
\end{displaymath}
We will also employ the jump operator $\jump{\cdot}$ and the average
$\aver{\cdot}$ on $\Gamma$, that is,
\begin{equation}
  \jump{v}|_{\Gamma}, \quad \aver{v}|_{\Gamma},
  \quad \jump{\bm{q}}|_{\Gamma}, \quad \aver{\bm{q}}|_{\Gamma},
  \label{eq_traceG}
\end{equation}
and their definitions will be given later for specific problems.

Throughout this paper,  we denote by $C$ and $C$ with subscripts
the generic positive constants that may vary between lines but are
independent of the mesh size $h$ and how $\Gamma$ cuts the
mesh $\MTh$. For a bounded domain $D$, we follow the standard
notations of the Sobolev spaces $L^2(D)$, $H^r(D)(r \geq 0)$ and their
corresponding inner products, norms and semi-norms. For the partition
$\MTh$, the notations of broken Sobolev spaces $L^2(\MTh)$,
$H^1(\MTh)$ are also used as well as their associated inner products
and broken Sobolev norms.

We follow three steps to  give the definition of the approximation
space $V_{h, 0}^m$ with respect to the partition $\MTho$. 

Step 1. Let $V_{h, 0}^{m, \circ}$ be the space of piecewise polynomials
  of degree $m \geq 1$ on   $\MThoc$. Here $V_{h,
0}^{m, \circ}$ can be the standard $C^0$ finite element space or the
discontinuous finite element space, i.e.
\begin{displaymath}
  V_{h, 0}^{m, \circ} = \{ v_h \in C(\MThoc) \ | \ v_h|_K \in
  \mb{P}_m(K), \ \forall K \in \MThoc \}, 
\end{displaymath}
or
\begin{displaymath}
  V_{h, 0}^{m, \circ} = \{ v_h \in L^2(\MThoc) \ | \ v_h|_K \in
  \mb{P}_m(K), \ \forall K \in \MThoc \},
\end{displaymath}
where  $\mb{P}_m(K)$ denotes the set of polynomials of degree $m$
defined on $K$.
  
Step 2.  We extend the space $V_{h, 0}^{m, \circ}$ to the mesh $\MTho$
by introducing an extension operator $E_{h, 0}$. To this end, for
every element $K \in \MTh$, we define a local extension operator
\begin{equation}
  \begin{aligned}
    E_K: \mb{P}_m(K) &\rightarrow \mb{P}_m(B(\bm{x}_K, C_{\Delta} h_K)
    ), \\
    v & \mapsto E_K v, \\
  \end{aligned} \quad v|_K = (E_K v)|_K.
  \label{eq_Ek}
\end{equation}
For any $v \in \mb{P}_m(K)$, $E_Kv$ is a polynomial defined on the
ball  $B(\bm{x}_K, C_{\Delta} h_K)$ and has the same expression as
$v$. Then the operator $E_{h, 0}$ is defined in a piecewise manner:
for any   $K \in \MTho$ and $v_h \in V_{h, 0}^{m, \circ}$,
\begin{equation}
  (E_{h, 0} v_h)|_K := \begin{cases}
    v_h|_K, & \forall K \in \MThoc, \\
    (E_{K^{\circ}} v)|_{K}, & \forall K \in \MThG, \\
  \end{cases} 
  \label{eq_Eh}
\end{equation}
 where $K^\circ$ is defined in
Assumption \ref{as_mesh2}. Note that for any cut element $K \in \MThG$, the
operator $E_{h, 0}$ extends  polynomials of degree $m$ from the assigned interior
element $K^\circ$ to   $K$.

Step 3.  We define the approximation space $V_{h, 0}^m$   as the image space of
the operator $E_{h, 0}$,
\begin{displaymath}
V_{h, 0}^m := \{
E_{h, 0} v_h\ | \ \forall v_h \in V_{h, 0}^{m, \circ} \}.
\end{displaymath}

From \eqref{eq_Eh}, it can be seen that $V_{h, 0}^m$ is a
piecewise polynomial space and shares the same degrees of freedom of
the space $V_{h, 0}^{m, \circ}$, which implies that all degrees of freedom
of $V_{h, 0}^m$ locate inside the domain $\Omega_0$. 

Let   $I_{h, 0}$ be
the corresponding Lagrange interpolation operator of the space $V_{h,
0}^{m, \circ}$ and recall that $\Omega^*$ is an open bounded domain  including the union of
all balls $B(\bm{x}_K, C_\Delta h_K)$ $(\forall  K \in \MTh)$.  Then the following lemma shows   the
  approximation property of the space $V_{h, 0}^m$.
\begin{lemma}
  For any element $K \in \MThoc$, there exists a constant $C$ such
  that 
  \begin{equation}
    \|u - I_{h, 0} u\|_{H^q(K)} \leq C h_K^{m + 1 - q} \| u
    \|_{H^{m+1}(K)}, \quad q = 0, 1, \quad  \forall u \in
    H^{m+1}(\Omega^*),
    \label{eq_Vhoaperror1}
  \end{equation}
  and for any element $K \in \MThG$, there exists a constant $C$ such
  that 
  \begin{equation}
    \|u - E_{h, 0}(I_{h, 0} u)\|_{H^q(K)} \leq C h_K^{m + 1 - q} \| u
    \|_{H^{m+1}(B(\bm{x}_{K^\circ}, C_\Delta h_{K^{\circ}}))}, \quad q
    = 0, 1, \quad  \forall u \in H^{m+1}(\Omega^*).
    \label{eq_Vhoaperror2}
  \end{equation}
  \label{le_Vhoaperror}
\end{lemma}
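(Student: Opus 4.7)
The plan is to handle the two estimates separately. Estimate~\eqref{eq_Vhoaperror1} on an interior element $K \in \MThoc$ is just the classical local Bramble--Hilbert estimate for the Lagrange interpolation $I_{h, 0}$ on a quasi-uniform mesh; I would simply invoke this, noting that the (possibly enlarged) Lagrange patch of $K$ is contained in $\Omega^*$, so the $H^{m+1}$ norm can be taken over $K$ (or the patch of $K$, which is harmless after summing).

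For the cut element estimate~\eqref{eq_Vhoaperror2}, write $B := B(\bm{x}_{K^\circ}, C_\Delta h_{K^\circ})$. By Assumption~\ref{as_mesh2} and the symmetry of the touching relation, together with quasi-uniformity, $K$, $K^\circ$, and the Lagrange patch of $K^\circ$ all sit inside $B$ (enlarging $C_\Delta$ once and for all if necessary), and $h_K \simeq h_{K^\circ}$. Apply the Bramble--Hilbert lemma on the convex ball $B$ to produce $\tilde{p} \in \mb{P}_m$ with
\begin{equation*}
  \|u - \tilde{p}\|_{H^q(B)} \leq C\, h_K^{m+1-q}\, |u|_{H^{m+1}(B)}, \qquad q = 0, 1.
\end{equation*}
Then decompose
\begin{equation*}
  u - E_{h, 0}(I_{h, 0} u)\big|_K \;=\; (u - \tilde{p}) \;+\; \bigl(\tilde{p} - E_{K^\circ}(I_{h, 0} u|_{K^\circ})\bigr).
\end{equation*}
The first summand is controlled on $K$ by the Bramble--Hilbert bound since $K \subset B$. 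The second summand is the restriction to $K$ of a polynomial of degree $\leq m$ living on $B$; since $E_{K^\circ}(I_{h,0}u|_{K^\circ})|_{K^\circ} = I_{h,0}u|_{K^\circ}$, its values on $K^\circ$ coincide with $\tilde{p} - I_{h, 0} u$, and a triangle inequality on $K^\circ$ yields
\begin{equation*}
  \|\tilde{p} - I_{h, 0} u\|_{H^q(K^\circ)} \leq \|u - \tilde{p}\|_{H^q(K^\circ)} + \|u - I_{h, 0} u\|_{H^q(K^\circ)} \leq C\, h_K^{m+1-q}\, |u|_{H^{m+1}(B)},
\end{equation*}
using~\eqref{eq_Vhoaperror1} on $K^\circ$ (whose Lagrange patch lies in $B$) and the Bramble--Hilbert estimate on $B$.

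The remaining and main technical point is a polynomial norm equivalence between $K^\circ$ and $B$: for every $p \in \mb{P}_m$,
\begin{equation*}
  \|p\|_{H^q(B)} \leq C\, \|p\|_{H^q(K^\circ)},
\end{equation*}
with $C$ independent of $h$ and of how $\Gamma$ cuts the mesh. The key observation is that the shape-regular simplex $K^\circ$ has inradius $\simeq h_K$ and sits inside a ball of radius $C_\Delta h_K$; mapping to a reference configuration via the affine map of $K^\circ$ reduces the inequality to a norm equivalence on the finite-dimensional space $\mb{P}_m$ between a fixed reference simplex and a ball of fixed radius, with a constant depending only on $m$, the shape-regularity constant, and $C_\Delta$. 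Once this equivalence is established, combining it with the decomposition above transfers the $H^q(K^\circ)$ estimate on the polynomial difference to an $H^q(B)$ estimate, hence to an $H^q(K)$ estimate, and collecting the two summands yields~\eqref{eq_Vhoaperror2}. The main obstacle is precisely this scaling/norm-equivalence step, which must be carried out with uniform constants so that the final bound is insensitive to the position of $\Gamma$.
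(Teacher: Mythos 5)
Your proposal is correct and follows essentially the same route as the paper: Bramble--Hilbert on the ball $B(\bm{x}_{K^\circ}, C_\Delta h_{K^\circ})$, a triangle-inequality splitting through the intermediate polynomial, and a scaling-based norm equivalence for $\mb{P}_m$ that transfers the bound from $K^\circ$ back to the ball (the paper routes this through the inscribed ball $B(\bm{x}_{K^\circ}, \rho_{K^\circ}) \subset K^\circ$, which is only a cosmetic difference from your reference-configuration argument). No gaps.
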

\begin{proof}
  It is sufficient to verify the estimate \eqref{eq_Vhoaperror2}, since
  the estimate \eqref{eq_Vhoaperror1} is standard. For the ball
  $B(\bm{x}_{K^\circ}, C_\Delta h_{K^{\circ}})$, there exists a
  polynomial $v_h \in \mb{P}_m(B(\bm{x}_{K^\circ}, C_\Delta
  h_{K^{\circ}}))$ such that \cite{Brenner2007mathematical} 
  \begin{displaymath}
    \|u - v_h \|_{H^q(B(\bm{x}_{K^\circ}, C_\Delta h_{K^{\circ}}))}
    \leq C h_{K^\circ}^{m + 1 - q} \| u
    \|_{H^{m+1}(B(\bm{x}_{K^\circ}, C_\Delta h_{K^{\circ}}))}. 
  \end{displaymath}
  Thus, we have that 
  \begin{displaymath}
    \begin{aligned}
      \|u - E_{h, 0}(I_{h, 0} u)\|_{H^q(K)} \leq \|u - v_h \|_{H^q(K)}  +
      \|v_h - E_{h, 0}(I_{h, 0} u) \|_{H^q(K)}. 
    \end{aligned}
  \end{displaymath}
  From the mesh regularity, there exists a constant $C_1$ such that
  $$C_\Delta \leq (C_\Delta h_{K^\circ}) / \rho_{K^\circ} \leq C_1.$$
  {Considering the norm equivalence between $\|
  \cdot \|_{L^2(B(\bm{x}_{K^\circ}, C_1))}$ and $ \| \cdot
  \|_{L^2(B(\bm{x}_{K^\circ}, 1))}$ for the space $\mb{P}_m(\cdot)$ and
  the affine mapping from $B(\bm{x}_{K^\circ}, 1)$
  to the $B(\bm{x}_{K^\circ}, \rho_{K^\circ})$,} there holds 
  \begin{displaymath}
    \|q_h \|_{H^q(B(\bm{x}_{K^\circ}, C_\Delta h_{K^\circ}))}
    \leq C \|q_h \|_{H^q(B(\bm{x}_{K^\circ},
    \rho_{K^\circ}))}, \quad \forall q_h \in \mb{P}_m(
    B(\bm{x}_{K^\circ}, C_\Delta h_{K^\circ})).
  \end{displaymath}
  Combining $h_{K^\circ} \leq C h_K$, the above result brings us that
  \begin{displaymath}
    \begin{aligned}
      \|v_h - E_{h, 0}(I_{h, 0} u) \|_{H^q(K)} & = \|E_{h, 0}( v_h -
      I_{h, 0} u) \|_{H^q(K)} \leq \|E_{h, 0}( v_h - I_{h, 0} u)
      \|_{H^q( B(\bm{x}_{K^\circ}, C h_{K^{\circ}}) )} \\
      & \leq C \|v_h - I_{h, 0} u \|_{H^q( B(\bm{x}_{K^\circ},
      \rho_{K^{\circ}}) )} \leq C \|v_h - I_{h, 0} u \|_{H^q( K^\circ
      )} \\
      & \leq C \left( \|u - v_h \|_{H^q(K^\circ)} + \|u - I_{h, 0} u
      \|_{ H^q(K^\circ)} \right) \\
      & \leq C h_K^{m+1 - q} \| u \|_{H^{m+1}(B(\bm{x}_{K^\circ}, C
      h_{K^{\circ}}))},
    \end{aligned}
  \end{displaymath}
  which completes the proof. 
\end{proof}
We have given the definition and the basic property of the
approximation space. The implementation of the space is the same as
the common finite element spaces, which is very simple and does not
need any strategy for adjusting the mesh to eliminate the effects of
the small cuts. Thus the curve $\Gamma$ is allowed to intersect the
partition in an arbitrary fashion. In next two sections, we will apply
the space $V_{h, 0}^m$ to solve the elliptic  problem on a curved
domain and the elliptic   interface problem, respectively.

\section{Approximation to Elliptic   Problem on  Curved Domain}
\label{sec_elliptic}
In this section, we are concerned with the model boundary problem
defined on the curved domain $\Omega_0$: seek $u$ such that 
\begin{equation}
  \begin{aligned}
    -\Delta u & = f, && \text{in } \Omega_0, \\
    u & = g, && \text{on } \Gamma. \\
  \end{aligned}
  \label{eq_elliptic}
\end{equation}
We assume $f \in L^2(\Omega_0)$ and $g \in H^{3/2}(\Gamma)$. Then the
problem \eqref{eq_elliptic} admits a unique solution $u \in
H^2(\Omega_0)$ from the standard regularity result
\cite{Girault1986finite}. 

For this problem, the mesh $\MTh$ can be
regarded as a background mesh that entirely covers the domain
$\overline{\Omega}_0$, and $\MTho$ is the minimal subsets of $\MTh$
covering $\Omega_0$. The trace operators in \eqref{eq_traceG} for this
problem are specified as 
\begin{displaymath} \aver{v}|_{\Gamma_K} := v|_{\Gamma_K}, \quad
  \jump{v}|_{\Gamma_K} := v|_{\Gamma_K} \un, \quad \aver{\bm{q}}|_{\Gamma_K} :=
  \bm{q}|_{\Gamma_K}, \quad \jump{\bm{q}}|_{\Gamma_K} := \bm{q}|_{\Gamma_K} \cdot \un, 
\end{displaymath}
for any $K \in \MThG$, where $\un$ denotes the unit outward normal
vector on $\Gamma$. 

We solve the problem \eqref{eq_elliptic} by the space
$V_{h, 0}^m$, and the numerical solution is sought by the following
discrete variational form: find $u_h \in V_{h, 0}^m$
such that
\begin{equation}
  a_h(u_h, v_h) = l_h(v_h), \quad \forall v_h \in V_{h, 0}^m,
  \label{eq_dvariation}
\end{equation}
where the bilinear form $a_h(\cdot, \cdot)$ takes the form
\begin{eqnarray} \label{eq_bilinear}
    a_h(u_h, v_h) &:= & \sum_{K \in \MTho} \int_{K \cap \Omega_0} 
    \nabla u_h \cdot \nabla v_h \d{x}  \\
    &&- \sum_{e \in \MEho} \int_{e \cap \Omega_0} \left(\aver{\nabla
    u_h} \cdot \jump{v_h}   
    {+}  \aver{\nabla v_h} \cdot \jump{u_h}  
    {-}
    \frac{\mu}{h_e} \jump{u_h} \cdot \jump{v_h} \right)d{s}
    \nonumber\\
     &&- \sum_{K \in \MThG} \int_{\Gamma_K} \left(\aver{\nabla u_h} \cdot
     \jump{v_h} 
     {+} \aver{\nabla
     v_h} \cdot \jump{u_h} 
     {-}
    \frac{\mu}{h_K} \jump{u_h} \cdot \jump{v_h} \right)d{s} , \nonumber
 \end{eqnarray}
where $\mu$ is the positive penalty parameter and will be specified
later on. The linear form $l_h(\cdot)$ reads
\begin{equation}
  \begin{aligned}
    l_h(v_h) := \sum_{K \in \MTho} \int_{K \cap \Omega_0} fv_h d{x}  -
    \sum_{K \in \MThG} \int_{\Gamma_K} \aver{\nabla v_h} \cdot \un
   {g} \d{s} + \sum_{K \in \MThG} \int_{\Gamma_K}
    \frac{\mu}{h_K} \jump{v_h} \cdot \un g\d{s}.
  \end{aligned}
  \label{eq_linearform}
\end{equation}
Notice that   \eqref{eq_bilinear} is suitable for both cases that 
$V_{h, 0}^{m, \circ}$ is the discontinuous piecewise polynomial space
or the $C^0$ finite element space.
If $V_{h, 0}^{m, \circ}$ is chosen to be the continuous space,
$a_h(\cdot, \cdot)$ can be further simplified, see Remark
\ref{re_c0space}.
\begin{remark}
 For the $C^0$ finite element space $V_{h, 0}^{m, \circ}$, the terms
 defined on $\MEho$ 
 \begin{displaymath}
   \sum_{e \in \MEho} \int_{e \cap \Omega_0} \aver{\nabla u_h} \cdot
   \jump{v_h} \d{s}, \quad \sum_{e \in \MEho} \int_{e \cap \Omega_0}
   \aver{\nabla v_h} \cdot \jump{u_h} \d{s}, \quad \sum_{e \in \MEho}
   \int_{e \cap \Omega_0} \mu h_e^{-1} \jump{u_h} \cdot \jump{v_h}
   \d{s},
 \end{displaymath}
 are reduced to 
 \begin{displaymath}
   \sum_{e \in \MEhG} \int_{e \cap \Omega_0} \aver{\nabla u_h} \cdot
   \jump{v_h} \d{s}, \quad \sum_{e \in \MEhG} \int_{e \cap \Omega_0}
   \aver{\nabla v_h} \cdot \jump{u_h} \d{s}, \quad \sum_{e \in \MEhG}
   \int_{e \cap \Omega_0} \mu h_e^{-1} \jump{u_h} \cdot \jump{v_h}
   \d{s},
 \end{displaymath}
 respectively.
 \label{re_c0space}
\end{remark}
{
\begin{remark}
  The scheme \eqref{eq_dvariation} 
  can be termed as a symmetric
  interior penalty finite element method since $a_h(v_h, w_h) =
  a_h(w_h, v_h)$. In fact,  if we replace the trace terms $$\int_{e \cap \Omega_0}
  \aver{\nabla v_h} \cdot \jump{u_h} \d{s} \ \text{ and } \ \int_{\Gamma_K}
  \aver{\nabla v_h} \cdot \jump{u_h} \d{s}$$ 
  in \eqref{eq_bilinear} with 
  $$\int_{e \cap \Omega_0} -
  \aver{\nabla v_h} \cdot \jump{u_h} \d{s} \ \text{ and } \  \int_{\Gamma_K} -
  \aver{\nabla v_h} \cdot \jump{u_h} \d{s},$$
   respectively, then the  modified  scheme corresponds to a
  non-symmetric interior penalty method \cite{Riviere1999improved}.
  The   analysis for the error estimate \eqref{eq_DGerror} under the
  energy norm can be easily adapted for this case. Particularly, the
  non-symmetric method has a parameter-friendly feature, i.e. $\mu$
  can be selected as any positive number.
  \label{re_nipg}
\end{remark}
}

Next, we focus on the well-posedness of the discrete problem
\eqref{eq_dvariation}. For this goal, we introduce an energy norm
$\DGnorm{\cdot}$  defined by 
\begin{displaymath}
  \begin{aligned}
    \DGnorm{v_h}^2 := \sum_{K \in \MTho}  \|\nabla v_h &\|_{L^2(K \cap
    \Omega_0)}^2 + \sum_{e \in \MEho} h_e \| \aver{ \nabla v_h}
    \|_{L^2(e \cap \Omega_0)}^2 + \sum_{e \in \MEho} h_e^{-1} \|
    \jump{ v_h} \|_{L^2(e \cap \Omega_0)}^2 \\
    +& \sum_{K \in \MThG}  h_K \|\aver{ \nabla v_h}
    \|_{L^2(\Gamma_K)}^2 + \sum_{K \in \MThG} h_K^{-1} \| \jump{ v_h}
    \|_{L^2(\Gamma_K)}^2, \\
  \end{aligned}
\end{displaymath}
for any $v_h \in V_{h, 0} := V_{h, 0}^m + H^2(\Omega_0)$.

We give the following discrete trace estimate and  inverse estimate on
$\Gamma$, which are crucial in the forthcoming analysis.
\begin{lemma}
  There exists a   constant $C$ such that for any $K \in \MThG$ and
  $\alpha = 0, 1$,  it holds
  \begin{equation}
    \| D^\alpha v_h\|_{L^2( (\partial K)^0)} \leq C h_K^{-1/2} \|
    D^\alpha v_h \|_{L^2(K^\circ)}, \quad \forall v_h \in V_{h, 0}^m,  
      \label{eq_dtrace}
  \end{equation}
  \begin{equation}
    \| D^\alpha v_h \|_{L^2(K \cap \Omega_0)} \leq C \| D^\alpha v_h
    \|_{L^2(K^\circ)},  \quad \forall v_h \in V_{h, 0}^m,  
    \label{eq_dinverse}
  \end{equation}
  \label{le_dtrace}
   where $(\partial K)^0 = (\partial K \cap \Omega_0) \cup
  \Gamma_K$.
\end{lemma}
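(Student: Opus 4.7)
The critical observation is that by the definition of the extension operator $E_{h,0}$ in \eqref{eq_Eh}, the restriction $v_h|_K$ equals the restriction to $K$ of the polynomial $p := E_{K^\circ}(v_h|_{K^\circ}) \in \mb{P}_m$, which is defined on the entire ball $B := B(\bm{x}_{K^\circ}, C_\Delta h_{K^\circ})$. By Assumption \ref{as_mesh2} and the symmetry $K^\circ \in \Delta(K) \Leftrightarrow K \in \Delta(K^\circ)$, the choice of $C_\Delta$ ensures $K^\circ \cup K \subset B$, and in particular $(\partial K)^0 \subset \overline B$. Consequently, both claims reduce to comparing norms of the single polynomial $p$ (or of $D^\alpha p$, also a polynomial) on various subsets of $B$.

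My plan is a standard scaling argument. I would pull $B$ back to the unit ball $\hat B$ by the affine map $\hat x = (x - \bm{x}_{K^\circ})/h_{K^\circ}$, on which $\mb{P}_m$ is finite-dimensional so that all relevant seminorms are equivalent with constants depending only on $m$ and $d$. Using the uniform lower bound $\hat\rho := \rho_{K^\circ}/h_{K^\circ} \ge 1/C_1$ from quasi-uniformity, together with the inclusion $B(\bm{x}_{K^\circ},\rho_{K^\circ}) \subset K^\circ$, I would obtain the master inequality
\[
\|D^\alpha \hat p\|_{L^\infty(\hat B)} \;\le\; C\,\|D^\alpha \hat p\|_{L^2(\hat B(0,\hat\rho))} \;\le\; C\,\|D^\alpha \hat p\|_{L^2(\hat K^\circ)}.
\]

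For \eqref{eq_dinverse} I would then bound $\|D^\alpha\hat p\|_{L^2(\hat K\cap\hat\Omega_0)} \le |\hat K|^{1/2}\|D^\alpha\hat p\|_{L^\infty(\hat B)} \le C\|D^\alpha\hat p\|_{L^2(\hat K^\circ)}$ and scale back; since both sides of \eqref{eq_dinverse} are $L^2$ norms on $d$-dimensional sets of derivatives of the same order, the scaling produces a common factor $h_{K^\circ}^{d/2-\alpha}$ that cancels. For \eqref{eq_dtrace} the analogous surface bound is
\[
\|D^\alpha\hat p\|_{L^2(\hat\Sigma)} \;\le\; |\hat\Sigma|^{1/2}\,\|D^\alpha\hat p\|_{L^\infty(\hat B)} \;\le\; C\,\|D^\alpha\hat p\|_{L^2(\hat K^\circ)},
\]
where $\hat\Sigma$ is the rescaled image of $(\partial K)^0$. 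Here the surface measure scales by one fewer power of $h$ than the volume measure, which produces exactly the factor $h_K^{-1/2}$ in \eqref{eq_dtrace} after rescaling and using $h_{K^\circ} \asymp h_K$.

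The one nontrivial geometric ingredient is the uniform bound $|\hat\Sigma|\le C$, equivalently $|(\partial K)^0|\le C h_K^{d-1}$. The piecewise planar part $|\partial K|\le C h_K^{d-1}$ is immediate; the curved part $|\Gamma_K|\le C h_K^{d-1}$ is the only delicate point and will be the main obstacle. I would control it by parametrising $\Gamma$ locally as a graph over a suitable hyperplane, which is valid for $h$ sufficiently small because $\Gamma$ is $C^2$; Assumption \ref{as_mesh1} then prevents $\Gamma$ from folding back across faces of $K$, so the projection of $\Gamma_K$ onto the hyperplane is essentially one-to-one and the surface area is comparable to the projected area, itself bounded by $C h_K^{d-1}$. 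This is the only place the geometry of $\Gamma$ enters; everything else is finite-dimensional linear algebra on $\mb{P}_m$.
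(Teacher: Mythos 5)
Your proposal is correct and follows essentially the same route as the paper's proof: the same $L^\infty$--$L^2$ inverse inequality for $\mb{P}_m$ on the ball $B(\bm{x}_{K^\circ}, C_\Delta h_{K^\circ})$ obtained by affine scaling to a reference ball, the inclusion $B(\bm{x}_{K^\circ},\rho_{K^\circ})\subset K^\circ$ from quasi-uniformity, and the measure bounds $|K\cap\Omega_0|\le Ch_K^{d}$ and $|(\partial K)^0|\le Ch_K^{d-1}$ to convert the $L^\infty$ bound into the stated $L^2$ estimates with the correct powers of $h_K$. The only difference is that you sketch a graph-parametrisation argument for $|\Gamma_K|\le Ch_K^{d-1}$, whereas the paper simply cites \cite{Wu2012unfitted} for this fact; both are acceptable.
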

\begin{proof}
  Clearly, $(\partial K)^0 \in B(\bm{x}_{K^\circ}, C_\Delta
  h_{K^\circ})$ and the ball $B(\bm{x}_{K^\circ}, \rho_{K^\circ}) \in
  K^{\circ}$.  By \eqref{eq_Eh}, $(D^\alpha v_h)|_{ (\partial
  K)^0}$ has the same expression as $(D^\alpha v_h)|_{K^\circ}$, 
  and we deduce that 
  \begin{displaymath}
    \begin{aligned}
      \| D^\alpha v_h \|_{L^2( (\partial K)^{0})} &\leq |(\partial
      K)^0 |^{1/2} \| D^\alpha v_h \|_{L^\infty(B(\bm{x}_{K^\circ},
      C_\Delta h_{K^\circ}))} \\
      & \leq C |(\partial K)^0|^{1/2} |B(\bm{x}_{K^\circ},
      C_\Delta h_{K^\circ})|^{-1/2} \| D^\alpha v_h \|_{L^2(B(\bm{x}_{K^\circ},
      \rho_{K^\circ}))} \\
      & \leq C |(\partial K)^0|^{1/2} |B(\bm{x}_{K^\circ},
      C_\Delta h_{K^\circ})|^{-1/2}\| D^\alpha v_h \|_{L^2(K^\circ)} \\
      & \leq C h_K^{-1/2} \|D^\alpha v_h \|_{L^2(K^\circ)},
    \end{aligned}
  \end{displaymath}
 where in  the second inequality we have used the mesh regularity
 $C_\Delta h_K/ \rho_K \leq C$  and a scaling argument that  applies
 the inverse inequality $\| q_h \|_{L^\infty(B(0, 1))} \leq C \| q_h
 \|_{L^2(B(0, \rho_K/ (C_\Delta h_K)))}$ for any $q_h \in
 \mb{P}_m(B(0, 1))$ and the pullback with the bijective affine map
 from the ball $B(\bm{x}_{K^\circ}, C_\Delta h_{K^\circ})$ to $B(0,
 1)$, and in  the last inequality, we have used the mesh regularity
 $C_1 h_K \leq h_{K^\circ} \leq C_2 h_K$ and the estimate $| (\partial
 K)^0| \leq C h_K^{d - 1}$ \cite{Wu2012unfitted} due to the fact that
 $\Gamma$ is $C^2$-smooth or polygonal. Thus  the estimate
 \eqref{eq_dtrace} holds.  
  
  Similarly, we can obtain the estimate \eqref{eq_dinverse}. This 
  completes the proof.
\end{proof}

Based on above results, we are ready to prove that the bilinear form
$a_h(\cdot, \cdot)$ is bounded and coercive under the energy norm
$\DGnorm{\cdot}$.
\begin{lemma}
  Let $a_h(\cdot, \cdot)$ be defined as \eqref{eq_bilinear}, there
  exists a constant $C$ such that 
  \begin{equation}
    |a_h(u_h, v_h) |  \leq C \DGnorm{u_h} \DGnorm{v_h}, \quad \forall u_h, v_h
    \in 
    {V_{h, 0}}, 
    \label{eq_bounded}
  \end{equation}
  and with a sufficiently large $\mu$, there exists a constant $C$
  such that 
  \begin{equation}
    a_h(v_h, v_h) \geq C \DGnorm{v_h}^2, \quad \forall v_h \in V_{h,
    0}^m.
    \label{eq_coercive}
  \end{equation}
  \label{le_bc}
\end{lemma}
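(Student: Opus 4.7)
The plan is to derive both inequalities from Cauchy--Schwarz, Young's inequality, and the discrete trace/inverse estimates of Lemma~\ref{le_dtrace}, together with the classical inverse trace inequality for polynomials on a simplex. For the boundedness \eqref{eq_bounded}, I would apply Cauchy--Schwarz term by term in \eqref{eq_bilinear}: the volume term is controlled by the first summand of $\DGnorm{\cdot}^2$; each cross term $\int_{e\cap\Omega_0}\aver{\nabla u_h}\cdot\jump{v_h}\,ds$ is split as the pairing of $h_e^{1/2}\aver{\nabla u_h}$ with $h_e^{-1/2}\jump{v_h}$ and matched against the face-average and face-jump summands; the penalty term pairs $h_e^{-1/2}\jump{u_h}$ with $h_e^{-1/2}\jump{v_h}$; and the $\Gamma_K$ integrals are treated identically with $h_K$ replacing $h_e$.

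For the coercivity \eqref{eq_coercive}, I would expand $a_h(v_h,v_h)$ using symmetry and apply Young's inequality with a small parameter $\epsilon>0$ to each cross term, producing lower bounds of the form
\begin{displaymath}
-2\int_{e\cap\Omega_0}\aver{\nabla v_h}\cdot\jump{v_h}\,ds\geq -\epsilon\, h_e\|\aver{\nabla v_h}\|_{L^2(e\cap\Omega_0)}^2-\epsilon^{-1} h_e^{-1}\|\jump{v_h}\|_{L^2(e\cap\Omega_0)}^2,
\end{displaymath}
and the analogous bound on each $\Gamma_K$. The discrete trace estimate \eqref{eq_dtrace}, together with the classical inverse trace inequality on simplices and the inverse estimate \eqref{eq_dinverse}, controls each $h_e\|\aver{\nabla v_h}\|_{L^2(e\cap\Omega_0)}^2$ and each $h_K\|\aver{\nabla v_h}\|_{L^2(\Gamma_K)}^2$ by $C\|\nabla v_h\|_{L^2(K^\circ)}^2$ for an interior neighbour $K^\circ$. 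Mesh quasi-uniformity combined with Assumption~\ref{as_mesh2} ensures that the map $K\mapsto K^\circ$ has bounded multiplicity, so summing these local bounds yields a constant multiple of $\sum_{K\in\MTho}\|\nabla v_h\|_{L^2(K\cap\Omega_0)}^2$. Choosing $\epsilon$ small to absorb the $\epsilon$-weighted average-gradient pieces into the volume term, and then $\mu$ sufficiently large to absorb the $\epsilon^{-1}$-weighted jump pieces into the penalty, leaves a positive lower bound by the volume-gradient and jump parts of $\DGnorm{v_h}^2$; the two remaining $h$-weighted face-average pieces of the energy norm are then added back via the same trace chain, each bounded by a constant multiple of the volume term, closing the argument.

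The main obstacle will be calibrating $\epsilon$ and $\mu$ consistently while handling the face and interface contributions uniformly. The critical mechanism is the discrete trace estimate \eqref{eq_dtrace}, which is valid only because the definition of $E_{h,0}$ makes $v_h$ on a cut element $K$ coincide, as a polynomial, with $v_h$ on its designated interior neighbour $K^\circ$; without this rigid extension the average-gradient norms on cut faces and on $\Gamma_K$ could not be controlled by any interior volume norm, and coercivity would fail. The threshold on $\mu$ is ultimately determined by the product of the constant in \eqref{eq_dtrace} and the bounded multiplicity of $K\mapsto K^\circ$, after which the standard absorption argument closes the estimate.
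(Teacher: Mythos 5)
Your proposal is correct and follows essentially the same route as the paper: Cauchy--Schwarz for boundedness, and for coercivity the $\varepsilon$-Young absorption combined with the discrete trace estimate \eqref{eq_dtrace} (for cut elements) and the standard trace estimate (for interior elements) to dominate the $h$-weighted average-gradient terms by the volume gradient term, the paper merely packaging this as an equivalence between $\DGnorm{\cdot}$ and the weaker norm $\DGsnorm{\cdot}$ on $V_{h,0}^m$ before absorbing. Your explicit remark about the bounded multiplicity of the map $K\mapsto K^\circ$ makes precise a summation step the paper leaves implicit, but it is the same argument.
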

\begin{proof}
  By Cauchy-Schwarz inequality, we directly have 
  \begin{displaymath}
    \begin{aligned}
      - 2 \int_{e \cap \Omega_0} \aver{\nabla u_h} \cdot \jump{v_h}
      \d{s} \leq h_e \| \aver{\nabla u_h} \|_{L^2(e \cap \Omega_0)} +
      h_e^{-1} \| \jump{v_h} \|_{L^2(e \cap \Omega_0)}.
    \end{aligned}
  \end{displaymath}
  The other terms in \eqref{eq_bilinear} can  be bounded analogously. Thus,  the boundedness
  \eqref{eq_bounded} follows  from the
  definition of $\DGnorm{\cdot}$.

  The rest is to prove the coercivity \eqref{eq_coercive}. We
  introduce a weaker norm $\DGsnorm{\cdot}$, which is more
  {natural} for analysis, 
  \begin{displaymath}
    \DGsnorm{w_h}^2 := \sum_{K \in \MTho}  \|\nabla w_h \|_{L^2(K \cap
    \Omega_0)}^2 + \sum_{e \in \MEho} h_e^{-1} \| \jump{ w_h}
    \|_{L^2(e \cap \Omega_0)}^2 + \sum_{K \in \MThG} h_K^{-1} \|
    \jump{ w_h} \|_{L^2(\Gamma_K)}^2, 
  \end{displaymath}
  for $\forall w_h \in V_{h, 0}^m$. Then we state the equivalence
  between the norm $\DGnorm{\cdot}$ and the weaker norm
  $\DGsnorm{\cdot}$ restricted on the approximation space $V_{h,
  0}^m$. Obviously, it suffices to prove $\DGnorm{w_h} \leq C
  \DGsnorm{w_h}$.  To this end, we are required to bound the summation
  $\sum_{e \in \MEho} h_e \| \aver{ \nabla w_h} \|_{L^2(e \cap
  \Omega_0)}^2$ and $\sum_{K \in \MThG} h_K \| \aver{ \nabla w_h}
  \|_{L^2(\Gamma_K)}^2$ of $\DGnorm{w_h}$. By the trace estimate
  \eqref{eq_dtrace}, we obtain that 
  \begin{displaymath}
    \begin{aligned}
      \sum_{K \in \MThG} h_K \| \aver{\nabla w_h} \|_{L^2(\Gamma_K)}^2
      & \leq \sum_{K \in \MThG} C \| \nabla w_h \|_{L^2(K^\circ)}^2
      \leq C \sum_{K \in \MTho} \| \nabla w_h \|_{L^2(K \cap
      \Omega_0)}^2 \\
      & \leq C  \DGsnorm{w_h}^2. \\
    \end{aligned}
  \end{displaymath}
  Further, we consider the trace term defined on $e \in \MEho$. Let
  $e$ be shared by two neighbouring elements $K_1$ and $K_2$, we
  deduce that 
  \begin{displaymath}
    \begin{aligned}
      h_e \| \aver{\nabla w_h} \|_{L^2(e \cap \Omega_0)}^2 & \leq C
      h_e \left(\| \nabla w_h \|_{L^2(\partial K_1 \cap \Omega_0)}^2 +
      \| \nabla w_h \|_{L^2(\partial K_2 \cap \Omega_0)}^2 \right) \\
      & \leq C h_{K_1} \| \nabla w_h \|_{L^2(\partial K_1 \cap
      \Omega_0)}^2+ C h_{K_2} \| \nabla w_h \|_{L^2(\partial K_2
      \cap \Omega_0)}^2. \\
    \end{aligned}
  \end{displaymath}
  If $K_i \in \MThG  (i = 1 \ \text{or} \ 2)$ is a cut element, from the trace
  inequality \eqref{eq_dtrace}, there holds 
  \begin{displaymath}
    \begin{aligned}
      h_{K_i} \| \nabla w_h \|_{L^2(\partial K_i \cap
      \Omega_0)}^2  \leq C \| \nabla w_h \|_{L^2( (K_i)^{\circ})}^2.
    \end{aligned}
  \end{displaymath}
  If $K_i \in \MThoc$ is a non-interface element, the standard trace
  estimate directly gives that 
  \begin{displaymath}
    h_{K_i} \| \nabla w_h \|_{L^2(\partial K_i \cap \Omega_0)}^2 \leq
    C \| \nabla w_h \|_{L^2(K_i)}^2.
  \end{displaymath}
  Combining the above estimates shows that
  \begin{equation}
     \sum_{e \in \MEho} h_e \| \aver{ \nabla w_h} \|_{L^2(e \cap
     \Omega_0)}^2 \leq C \sum_{K \in \MTho} \| \nabla w_h \|_{L^2(K
     \cap \Omega_0)}^2 \leq C \DGsnorm{w_h}^2,
     \label{eq_averupper}
  \end{equation}
  which implies $\DGnorm{w_h} \leq C \DGsnorm{w_h}$, and also the
  equivalence between $\DGsnorm{\cdot}$ and  $\DGnorm{\cdot}$. This
  fact inspires us to prove the coercivity under the norm
  $\DGsnorm{\cdot}$.  From the Cauchy-Schwarz inequality and the
  estimate \eqref{eq_averupper}, for any $\varepsilon > 0$ there holds 
  \begin{displaymath}
    \begin{aligned}
      - 2\sum_{e \in \MEho} \int_{e \cap \Omega_0} \aver{\nabla v_h}
      \cdot \jump{v_h} \d{s} & \geq - \sum_{e \in \MEho} \varepsilon
      h_e \| \aver{\nabla v_h} \|_{L^2(e \cap \Omega_0)}^2 - \sum_{e
      \in \MEho}\varepsilon^{-1} h_e^{-1} \| \jump{v_h} \|_{L^2(e \cap
      \Omega_0)}^2 \\
      & \geq -C \varepsilon \sum_{K \in \MTho} \| \nabla v_h \|_{L^2(K
      \cap \Omega_0)}^2 - \sum_{e \in \MEho}\varepsilon^{-1} h_e^{-1}
      \| \jump{v_h} \|_{L^2(e \cap \Omega_0)}^2.  \\
    \end{aligned}
  \end{displaymath}
  The terms defined on $\Gamma_K$ can be bounded similarly, i.e.
  \begin{displaymath}
     - 2\sum_{K \in \MThG} \int_{\Gamma_K} \aver{\nabla v_h} \cdot
     \jump{v_h} \d{s} \geq -C \varepsilon \sum_{K \in \MTho} \| \nabla
     v_h \|_{L^2(K \cap \Omega_0)}^2 - \sum_{K \in
     \MThG}\varepsilon^{-1} h_K^{-1} \| \jump{v_h}
     \|_{L^2(\Gamma_K)}^2.
  \end{displaymath}
  By collecting all above results, we conclude that there exist
  constants $C_1$, $C_2$ and $C_3$ such that for any $\varepsilon > 0$
  there holds
  \begin{displaymath}
    \begin{aligned}
      a_h(v_h, v_h) \geq (1 - \varepsilon C_1)\sum_{K \in \MTho}
      \|\nabla v_h & \|_{L^2(K \cap \Omega_0)}^2 + (\mu -
      C_2/\varepsilon) \sum_{e \in \MEho} h_e^{-1} \| \jump{ v_h}
      \|_{L^2(e \cap \Omega_0)}^2 \\
      & + (\mu - C_3/\varepsilon) \sum_{K \in \MThG} h_K^{-1} \|
      \jump{ v_h} \|_{L^2(\Gamma_K)}^2.
    \end{aligned}
  \end{displaymath}
  Choose $\varepsilon = 1/(2C_1)$ and take a sufficiently large $\mu$,
  we arrive at $a_h(v_h, v_h) \geq C \DGsnorm{v_h}^2$, which gives the
  estimate \eqref{eq_coercive} and completes the proof.
\end{proof}
The Galerkin orthogonality also holds for the bilinear form
$a_h(\cdot, \cdot)$ and $l_h(\cdot)$.
\begin{lemma}
  Let $u \in H^2(\Omega)$ be the exact solution to  problem
  \eqref{eq_elliptic}, and let $u_h \in V_h^m$ be the numerical
  solution to   problem \eqref{eq_dvariation}, there holds
  \begin{equation}
    a_h(u - u_h, v_h) = l_h(v_h), \quad \forall v_h \in V_{h, 0}^m.
    \label{eq_Galerkinorth}
  \end{equation}
  \label{le_Galerkinorth}
\end{lemma}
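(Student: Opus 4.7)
The plan is to first verify the consistency identity $a_h(u, v_h) = l_h(v_h)$ for the exact solution $u$, and then to combine it with the discrete equation \eqref{eq_dvariation} through bilinearity of $a_h$ in order to conclude \eqref{eq_Galerkinorth}.

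For consistency, I would apply Green's formula element-wise on each $K \cap \Omega_0$ with $K \in \MTho$, invoking the PDE $-\Delta u = f$ in $\Omega_0$. Summing over $K$, the boundary contributions assemble into integrals over $\MEho$ and over $\Gamma_K$ for $K \in \MThG$. Since $u \in H^2(\Omega_0)$, both $u$ and its normal flux are single-valued across any face $e \in \MEho$, so $\jump{u}|_e = 0$ and $\aver{\nabla u}|_e = \nabla u|_e$; the integration-by-parts contributions along $\MEho$ then reduce to $\sum_{e \in \MEho} \int_{e \cap \Omega_0} \aver{\nabla u}\cdot\jump{v_h}\,\d{s}$, which cancels exactly the matching symmetric term in $a_h(u, v_h)$, while the remaining face contributions $\aver{\nabla v_h}\cdot\jump{u}$ and $(\mu/h_e)\jump{u}\cdot\jump{v_h}$ vanish outright because $\jump{u}|_e = 0$.

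On each cut element $K \in \MThG$, the Dirichlet condition gives $\jump{u}|_{\Gamma_K} = g\un$; substituting this into the two $\Gamma_K$ Nitsche terms $-\aver{\nabla v_h}\cdot\jump{u}$ and $(\mu/h_K)\jump{u}\cdot\jump{v_h}$ in $a_h(u, v_h)$ produces exactly the data-dependent boundary integrals that appear in $l_h(v_h)$, while the symmetric $\aver{\nabla u}\cdot\jump{v_h}$ contribution along $\Gamma_K$ cancels the integration-by-parts boundary flux there. Combined with the volume residue $\sum_{K\in\MTho} \int_{K \cap \Omega_0} f v_h\,\d{x}$, this reassembles $l_h(v_h)$ and establishes $a_h(u, v_h) = l_h(v_h)$. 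Then by bilinearity, $a_h(u - u_h, v_h) = a_h(u, v_h) - a_h(u_h, v_h)$, and combining this with the consistency relation and the discrete equation \eqref{eq_dvariation} yields the identity \eqref{eq_Galerkinorth} as asserted.

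The main technical obstacle is the bookkeeping of trace contributions on $\MEho$ and on each $\Gamma_K$: each of the three Nitsche ingredients (the consistent $\aver{\nabla u}\cdot\jump{v_h}$, the symmetric $\aver{\nabla v_h}\cdot\jump{u}$, and the penalty $\mu h^{-1}\jump{u}\cdot\jump{v_h}$) must be matched either against the integration-by-parts boundary flux produced by the PDE or against a data term in the definition of $l_h$. The fact that $V_{h,0}^m$ may be chosen as a fully discontinuous space makes the $\MEho$ contributions genuinely present in $a_h$, so their cancellation via the $H^2$-regularity of $u$ (continuity of $u$ and its normal derivative across interior faces of $\MTho$) cannot be skipped and must be tracked term by term.
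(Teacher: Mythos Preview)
Your approach is correct and essentially identical to the paper's: both establish consistency $a_h(u,v_h)=l_h(v_h)$ by element-wise integration by parts, using $\jump{u}|_e=0$ on interior faces from $H^2$-regularity and $u|_{\Gamma}=g$ on the boundary pieces $\Gamma_K$, and then subtract the discrete equation. Note that the displayed identity \eqref{eq_Galerkinorth} in the statement is evidently a typo for $a_h(u-u_h,v_h)=0$; your argument (and the paper's own proof) both yield this corrected form.
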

\begin{proof}
  From the regularity of $u$, we have $\jump{u} |_e= 0$ for any face $e \in
  \MEh^0$. We bring $u$ into the bilinear form $a_h(\cdot, \cdot)$ and get
  \begin{displaymath}
    \begin{aligned}
      a(u, v_h) - l(v_h) & = \sum_{K \in \MTho}  \int_{K \cap
      \Omega_0} (\nabla u \cdot \nabla v_h - f v_h ) \d{x} \\
      & - \sum_{e \in \MEho} \int_{e \cap \Omega_0} \nabla u \cdot
      \jump{v_h} \d{s} - \sum_{K \in \MThG} \int_{\Gamma_K} \nabla u
      \cdot \jump{v_h} \d{s}.  \\
    \end{aligned}
  \end{displaymath}
  Applying integration by parts leads to
  \begin{displaymath}
    \begin{aligned}
      \sum_{K \in \MThoc} \int_{K} (\nabla u \cdot \nabla v_h - f v_h)
      \d{x} = & \sum_{e \in \MEhoc} \int_e \nabla u \cdot \jump{v_h}
      \d{s}, \\
      \sum_{K \in \MThG} \int_{K \cap \Omega_0} (\nabla u \cdot \nabla
      v_h - f v_h) \d{x} =  & \sum_{e \in \MEhG} \int_{e\cap
      \Omega_0} \nabla u \cdot \jump{v_h} \d{s} + \sum_{K \in \MThG}
      \int_{\Gamma_K} \nabla u \cdot \jump{v_h} \d{s}, \\
    \end{aligned}
  \end{displaymath}
  which indicate $a(u, v_h) - l(v_h) = 0$ and the Galerkin
  orthogonality \eqref{eq_Galerkinorth}. This completes the proof.
\end{proof}
The approximation error estimation under the error
measurement $\DGnorm{\cdot}$ requires the following trace inequality \cite{Hansbo2002discontinuous,Huang2017unfitted, Wu2012unfitted}:
\begin{lemma}
  There exists a constant $h_0$ independent of $h$ such that if $0 < h
  \leq h_0$,  there exists a constant
  $C$ such that 
  \begin{equation}
    \|w \|_{L^2(\Gamma_K)}^2 \leq C \left( h_K^{-1} \|w \|_{L^2(K)}^2
    + h_K \|w \|_{H^1(K)}^2 \right), \quad \forall w \in H^1(K), \quad
    \forall K \in \MThG.
    \label{eq_H1trace}
  \end{equation}
  \label{le_H1trace}
\end{lemma}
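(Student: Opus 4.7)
The plan is to prove this classical trace-type inequality by combining a careful use of the divergence theorem with the $C^{2}$-regularity (resp.\ flatness) of $\Gamma$ and a standard scaling argument. The uniformity of the constant $C$ with respect to how $\Gamma$ cuts $K$ is the point that must be treated with care, and it is precisely why the smallness condition $h\le h_0$ is needed.

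First, I would fix $h_0$ as follows. Since $\Gamma=\partial\Omega_0$ is compact and either $C^{2}$-smooth or polygonal, the principal curvatures of $\Gamma$ (away from corners, in the polygonal case) are uniformly bounded by some constant $M>0$. Choose $h_0$ so that $Mh_0\le 1/4$, and in the polygonal case so small that any $K\in\MThG$ meets only a single flat face of $\Gamma$ (which is possible by mesh regularity together with the fact that corners of $\Gamma$ form a set of zero $d{-}1$-measure). Then for every $K\in\MThG$ with $h_K\le h_0$, picking a reference point $\bm x_\Gamma\in\Gamma_K$, the set $\Gamma_K$ can be represented in a coordinate system aligned with the tangent plane at $\bm x_\Gamma$ as the graph of a $C^{2}$ function $\phi$ with $\phi(0)=0$, $\nabla\phi(0)=0$ and $\|\nabla\phi\|_{L^{\infty}}\le 1/2$. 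In particular, the unit outward normal $\un_{\Gamma}$ on $\Gamma_K$ satisfies $\un_{\Gamma}\cdot\un_{\Gamma}(\bm x_\Gamma)\ge 1/2$ pointwise on $\Gamma_K$.

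Second, introduce the constant vector field $\bm\beta:=\un_{\Gamma}(\bm x_\Gamma)$. Applying the divergence theorem to $w^{2}\bm\beta$ on the Lipschitz domain $K\cap\Omega_0$ yields
\[
\int_{\Gamma_K} w^{2}\,(\bm\beta\cdot\un_{\Gamma})\,ds
=\int_{K\cap\Omega_0}2w\,\bm\beta\cdot\nabla w\,dx
-\int_{\partial K\cap\Omega_0} w^{2}\,(\bm\beta\cdot\un_K)\,ds.
\]
By the pointwise lower bound $\bm\beta\cdot\un_\Gamma\ge 1/2$, the left-hand side dominates $\frac{1}{2}\|w\|_{L^{2}(\Gamma_K)}^{2}$. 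The volume integral is bounded by Cauchy--Schwarz by $2\|w\|_{L^{2}(K)}\|\nabla w\|_{L^{2}(K)}$. The remaining boundary integral lives on the flat pieces of $\partial K$, and is controlled by the standard trace inequality applied on the simplex $K$, whose shape is uniformly regular by the quasi-uniformity of $\MTh$.

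Finally, a routine scaling from $K$ to a reference simplex of unit diameter, together with Young's inequality
\[
2\|w\|_{L^{2}(K)}\|\nabla w\|_{L^{2}(K)}\le h_K^{-1}\|w\|_{L^{2}(K)}^{2}+h_K\|\nabla w\|_{L^{2}(K)}^{2},
\]
produces the desired bound with the stated $h_K$-scaling. The hardest part is ensuring that the constants from both the divergence-theorem step and the trace step on the flat faces of $\partial K\cap\Omega_0$ are uniform in how $\Gamma$ cuts $K$; this relies crucially on the smallness of $h_0$ (so that $\Gamma_K$ is uniformly close to flat and the field $\bm\beta$ can be chosen uniformly) and on the mesh regularity (so that the trace constants on the flat faces of $K$ remain bounded).
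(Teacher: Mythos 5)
The paper does not actually prove Lemma \ref{le_H1trace}: it is quoted from the literature \cite{Hansbo2002discontinuous,Huang2017unfitted,Wu2012unfitted}, so there is no in-paper argument to compare against. Your divergence-theorem proof is essentially the standard one behind those references: integrate $\nabla\cdot(w^{2}\bm{\beta})$ over $K\cap\Omega_0$ with $\bm{\beta}$ a fixed unit vector transversal to $\Gamma_K$, use the near-flatness of $\Gamma_K$ for $h\le h_0$ (controlled by the curvature bound of the compact $C^{2}$ surface $\Gamma$) to get $\bm{\beta}\cdot\un_\Gamma\ge 1/2$ pointwise, bound the volume term via Cauchy--Schwarz and Young's inequality with weight $h_K$, and absorb the contribution on $\partial K\cap\Omega_0$ by the scaled trace inequality on the shape-regular simplex $K$. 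For $C^{2}$ boundaries this is correct and yields a constant independent of the cut position, modulo the routine density argument needed to justify applying the divergence theorem to $w^{2}\bm{\beta}$ when $w$ is merely in $H^{1}(K)$.

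The one genuine defect is your treatment of the polygonal case. No choice of $h_0$ can ensure that every cut element meets only a single flat face of $\Gamma$: any element containing a vertex (or, in 3D, an edge or vertex) of the polygonal boundary necessarily meets at least two faces, and the remark that the corner set has zero $(d-1)$-measure does not remove such elements from $\MThG$. The argument can still be carried out, but for a different reason: the paper assumes $\Omega_0$ is a \emph{convex} polygon or polyhedron, so the outward normals of the faces meeting at any corner lie in a common open half-space, and one can choose a single unit vector $\bm{\beta}$ (for instance the normalized sum of those normals) satisfying $\bm{\beta}\cdot\un_\Gamma\ge c_0>0$ on all of $\Gamma_K$, with $c_0$ depending only on the interior angles of $\Gamma$ and not on $K$. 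For a nonconvex corner the constant-field argument would genuinely fail, which is consistent with the convexity hypothesis in the paper. You should replace the ``only one face'' claim with this convexity-based choice of $\bm{\beta}$.
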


Moreover, we need to use the
Sobolev extension theory \cite{Adams2003sobolev} in the approximation analysis: there exists an
extension operator $E_0: H^s(\Omega_0) \rightarrow H^s(\Omega^*)(s
\geq 2)$ such that for any $w\in H^s(\Omega_0)$,
\begin{displaymath}
  (E_0 w)|_{\Omega_0} = w, \quad \|E_0 w \|_{H^q(\Omega^*)} \leq C \|
  w \|_{H^q(\Omega_0)}, \quad 2 \leq q \leq s.
\end{displaymath}
Combining Lemma \ref{le_Vhoaperror}, Lemma \ref{le_H1trace} and the
extension operator $E_0$, we give the following approximation estimate with
respect to $\DGnorm{\cdot}$:
\begin{theorem}
  For $0 < h \leq h_0$, there exists a constant $C$ such that 
  \begin{equation}
    \inf_{v_h \in V_{h, 0}^m} \DGnorm{u - v_h} \leq C h^m \|u
    \|_{H^{m+1}(\Omega_0)},  \quad \forall u \in H^{m+1}(\Omega_0).
    \label{eq_DGaperror}
  \end{equation}
  \label{th_DGaperror}
\end{theorem}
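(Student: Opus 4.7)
The plan is to construct an explicit approximant $v_h \in V_{h,0}^m$ from the operators already at hand and bound each of the five pieces of $\DGnorm{u-v_h}$ separately. First I would apply the Sobolev extension $E_0$ to lift $u$ to $\tilde u := E_0 u \in H^{m+1}(\Omega^*)$ with $\|\tilde u\|_{H^{m+1}(\Omega^*)} \leq C \|u\|_{H^{m+1}(\Omega_0)}$, so that $\tilde u$ is available on every ball $B(\bm{x}_{K^\circ}, C_\Delta h_{K^\circ})$ touched by $E_{h,0}$. The natural candidate is then
\[
v_h := E_{h,0}(I_{h,0} \tilde u) \in V_{h,0}^m.
\]

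For the volume contribution $\sum_{K \in \MTho} \|\nabla(u-v_h)\|_{L^2(K \cap \Omega_0)}^2$, Lemma \ref{le_Vhoaperror} with $q=1$ gives the required $O(h_K^{2m})$ bound on both $K \in \MThoc$ (via \eqref{eq_Vhoaperror1}) and $K \in \MThG$ (via \eqref{eq_Vhoaperror2}); the cut-element right-hand sides sum to $C\|\tilde u\|_{H^{m+1}(\Omega^*)}^2$ because the enlarged balls $B(\bm{x}_{K^\circ}, C_\Delta h_{K^\circ})$ have uniformly bounded overlap thanks to mesh quasi-uniformity. For the face and curved-interface trace terms I would apply, on each adjacent $K \in \MTho$, the classical scaled trace inequality on $e \subset \partial K$ and Lemma \ref{le_H1trace} on $\Gamma_K$: for any $w \in H^2(K)$,
\[
h_K^{-1} \|w\|_{L^2(\partial K)}^2 + h_K \|\nabla w\|_{L^2(\partial K)}^2 \leq C\bigl( h_K^{-2} \|w\|_{L^2(K)}^2 + \|\nabla w\|_{L^2(K)}^2 + h_K^2 |w|_{H^2(K)}^2 \bigr),
\]
and analogously on $\Gamma_K$. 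Taking $w = u - v_h$ reduces every jump and average contribution to local $H^q(K)$ errors with $q \in \{0,1,2\}$; summing over $\MTho$ and $\MEho$ and using the bound $\|\tilde u\|_{H^{m+1}(\Omega^*)} \leq C \|u\|_{H^{m+1}(\Omega_0)}$ delivers the claimed $C h^m \|u\|_{H^{m+1}(\Omega_0)}$ estimate.

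The main technical obstacle is that Lemma \ref{le_Vhoaperror} is stated only for $q = 0, 1$, while the trace bound above also requires control of $|u - v_h|_{H^2(K)}$ on cut elements $K \in \MThG$. I would remove this obstacle by extending Lemma \ref{le_Vhoaperror} to $q = 2$ (in fact to any $q \leq m+1$): the Brenner--Scott polynomial approximation of $\tilde u$ on $B(\bm{x}_{K^\circ}, C_\Delta h_{K^\circ})$ furnishes a $p_h \in \mb{P}_m$ with $\|\tilde u - p_h\|_{H^q(B)} \leq C h_{K^\circ}^{m+1-q} \|\tilde u\|_{H^{m+1}(B)}$ for every $q \leq m+1$, and the norm-equivalence together with the affine-scaling argument used in the original proof of Lemma \ref{le_Vhoaperror} carries over verbatim since $E_{h,0}(I_{h,0}\tilde u) - p_h$ remains a polynomial of degree $\leq m$ on the same ball. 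With this mild strengthening of Lemma \ref{le_Vhoaperror} in hand, the summation over elements and faces is routine and yields \eqref{eq_DGaperror}.
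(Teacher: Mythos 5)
Your proposal is correct and follows essentially the same route as the paper: the approximant is $v_h = E_{h,0}(I_{h,0} E_0 u)$, the volume terms are handled by Lemma \ref{le_Vhoaperror}, and the face and $\Gamma_K$ terms by scaled trace inequalities and Lemma \ref{le_H1trace}. Your explicit observation that the trace bounds require the $q=2$ analogue of \eqref{eq_Vhoaperror2}, and that the Bramble--Hilbert plus affine-scaling argument extends verbatim to supply it, is a point the paper's proof uses implicitly (its $\|\nabla E_0 u - \nabla v_h\|_{H^1(K)}$ bound on cut elements) without spelling out; your version is the more careful one.
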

\begin{proof}
  Let $I_{h, 0} u$ be the Lagrange interpolant of $u$ into the space
  $V_{h, 0}^{m, \circ}$ and consider $v_h = E_{h, 0}(I_{h, 0} u)$.
  From Lemma \ref{le_Vhoaperror}, we have that 
  \begin{displaymath}
    \sum_{K \in \MTho} \|u - v_h \|_{H^q(K \cap \Omega_0)} \leq C
    h^{m+1-q} \|E_0 u \|_{H^{m+1}(\Omega^*)} \leq C
    h^{m+1-q} \| u \|_{H^{m+1}(\Omega_0)},
  \end{displaymath}
  with $q = 0, 1$. For any $e \in \MEho$, let $e$ be
  shared by $K_+$ and $K_-$, and by the standard trace estimate, we
  obtain 
  \begin{displaymath}
    \begin{aligned}
      \sum_{e \in \MEho} h_e \| \aver{ \nabla u  -  \nabla v_h}
      \|_{L^2(e \cap \Omega_0)}^2 &\leq \sum_{e \in \MEho} h_e \|
      \aver{ \nabla (E_0 u) - \nabla v_h} \|_{L^2(e)}^2 \\
      & \leq \sum_{e \in \MEho} C \left( \|  E_0 u - v_h
      \|_{H^1(K_+)}^2 + \|  E_0 u - v_h
      \|_{H^1(K_-)}^2 \right) \\
      & \leq C h^{2m} \| u \|_{H^{m+1}(\Omega_0)}^2. \\
    \end{aligned}
  \end{displaymath}
  Similarly, there holds
  \begin{displaymath}
    \sum_{e \in \MEho} h_e^{-1} \| \jump{u - v_h} \|_{L^2(e \cap
    \Omega_0)}^2 \leq C h^{2m} \| u \|_{H^{m+1}(\Omega_0)}^2. 
  \end{displaymath}
  By the trace estimate \eqref{eq_H1trace}, we have  
  \begin{displaymath}
    \begin{aligned}
      \sum_{K \in \MThG} h_K \| \aver{\nabla u - \nabla v_h}
      \|_{L^2(\Gamma_K)}^2 &\leq C \sum_{K \in \MThG} \| \nabla E_0 u -
      \nabla v_h \|_{H^1(K)}^2 \\
      & \leq C h^{2m} \| u \|_{H^{m+1}(\Omega_0)}^2
    \end{aligned}
  \end{displaymath}
  and  
  \begin{displaymath}
    \sum_{K \in \MThG} h_K^{-1} \| \jump{u - u_h} \|_{L^2(\Gamma_K)}^2
       \leq C h^{2m} \| u \|_{H^{m+1}(\Omega_0)}^2. \\
  \end{displaymath}
  Collecting all above estimates immediately leads to the error
  estimate \eqref{eq_DGaperror}, which completes the proof.
\end{proof}
Now, we are ready to give   {\it a priori} error estimates for our
method.
\begin{theorem}
  Let $u \in H^{m+1}(\Omega_0)$ be the exact
  solution to \eqref{eq_elliptic} and $u_h \in V_{h, 0}^m$ be the
  numerical solution to \eqref{eq_dvariation}, and let $a_h(\cdot, \cdot)$
  be defined as in \eqref{eq_bilinear} with a sufficiently large $\mu$. Then
  for $0 < h \leq h_0$, there exists a constant $C$ such that 
  \begin{equation}
    \DGnorm{u - u_h} \leq Ch^m \|u \|_{H^{m+1}(\Omega_0)},
    \label{eq_DGerror}
  \end{equation}
  and 
  \begin{equation}
    \| u - u_h  \|_{L^2(\Omega_0)} \leq C h^{m+1} \|u
    \|_{H^{m+1}(\Omega_0)}.
    \label{eq_L2error}
  \end{equation}
  \label{th_error}
\end{theorem}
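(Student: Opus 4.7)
The plan is to treat the two bounds separately, with the energy-norm estimate \eqref{eq_DGerror} coming directly from a standard Céa-type argument and the $L^2$ estimate \eqref{eq_L2error} coming from an Aubin--Nitsche duality argument. For \eqref{eq_DGerror}, I would take an arbitrary $v_h \in V_{h,0}^m$, split $u - u_h = (u - v_h) + (v_h - u_h)$, and control the discrete part by the coercivity estimate \eqref{eq_coercive} applied to $v_h - u_h$; then, using the Galerkin orthogonality \eqref{eq_Galerkinorth} to replace $a_h(v_h - u_h, v_h - u_h)$ by $a_h(v_h - u, v_h - u_h)$, and applying the boundedness \eqref{eq_bounded} on the enlarged space $V_{h,0} = V_{h,0}^m + H^2(\Omega_0)$, I would conclude $\DGnorm{v_h - u_h} \leq C \DGnorm{v_h - u}$. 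A triangle inequality and Theorem \ref{th_DGaperror} then yield \eqref{eq_DGerror}. The crucial point here is that boundedness and Galerkin orthogonality both hold on the sum space $V_{h,0}^m + H^2(\Omega_0)$, which is exactly what the energy norm $\DGnorm{\cdot}$ was designed for.

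For \eqref{eq_L2error}, I would introduce the dual problem: let $w \in H^1_0(\Omega_0)$ solve $-\Delta w = u - u_h$ in $\Omega_0$ with $w = 0$ on $\Gamma$. Since $\Omega_0$ has $C^2$-smooth or convex polygonal boundary, elliptic regularity gives $w \in H^2(\Omega_0)$ with $\|w\|_{H^2(\Omega_0)} \leq C \|u - u_h\|_{L^2(\Omega_0)}$. Multiplying the dual equation by $u - u_h$ and performing element-wise integration by parts, using $\jump{w} = 0$ and $w|_\Gamma = 0$, I would show that
\begin{displaymath}
\|u - u_h\|_{L^2(\Omega_0)}^2 = a_h(u - u_h, w).
\end{displaymath}
Then, for any $w_h \in V_{h,0}^m$, Galerkin orthogonality gives $a_h(u - u_h, w) = a_h(u - u_h, w - w_h)$, and boundedness yields $\|u - u_h\|_{L^2(\Omega_0)}^2 \leq C \DGnorm{u - u_h}\, \DGnorm{w - w_h}$. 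Choosing $w_h = E_{h,0}(I_{h,0}(E_0 w))$ and invoking Theorem \ref{th_DGaperror} with $m=1$ bounds $\DGnorm{w - w_h} \leq C h \|w\|_{H^2(\Omega_0)} \leq C h \|u - u_h\|_{L^2(\Omega_0)}$, and combining with \eqref{eq_DGerror} and dividing by $\|u-u_h\|_{L^2(\Omega_0)}$ delivers \eqref{eq_L2error}.

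The main obstacle I anticipate is verifying the duality identity $\|u - u_h\|_{L^2(\Omega_0)}^2 = a_h(u - u_h, w)$ rigorously. The proof of Lemma \ref{le_Galerkinorth} already gives the template: one must perform element-wise integration by parts on both interior elements of $\MThoc$ and on cut portions $K \cap \Omega_0$ for $K \in \MThG$, and then carefully track the boundary contributions on $\Gamma_K$, on interior faces $e \cap \Omega_0$, and on the cut faces. The continuity of $w$ across interior faces and the vanishing trace $w|_\Gamma = 0$ kill the average/jump terms that multiply $\jump{w}$ and $w|_\Gamma$, so the only surviving terms match $a_h(u - u_h, w)$ exactly. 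Once this consistency identity is in hand, the rest is routine.
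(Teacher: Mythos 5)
Your proposal is correct and follows essentially the same route as the paper: the energy estimate is obtained by the identical Céa/Lax--Milgram argument combining coercivity, Galerkin orthogonality, boundedness and Theorem \ref{th_DGaperror}, and the $L^2$ estimate by the same Aubin--Nitsche duality with the dual solution interpolated via $E_{h,0}\circ I_{h,0}$ and the approximation result applied at first order. The only difference is that you spell out the consistency identity $\|u-u_h\|_{L^2(\Omega_0)}^2=a_h(u-u_h,w)$ which the paper asserts without detail; your sketch of how to verify it (element-wise integration by parts, using $\jump{w}=0$ on interior faces and $w|_\Gamma=0$, exactly as in the proof of Lemma \ref{le_Galerkinorth}) is the right one.
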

\begin{proof}
  The proof follows from the standard Lax-Milgram framework. For any $v_h
  \in V_{h, 0}^m$, the boundedness \eqref{eq_bounded} and the
  coercivity \eqref{eq_coercive} give 
  \begin{displaymath}
    \begin{aligned}
      \DGnorm{u_h - v_h}^2 & \leq C a_h(u_h - v_h, u_h - v_h) = C
      a_h(u - v_h, u_h - v_h) \\
      & \leq C \DGnorm{u_h - v_h} \DGnorm{u - v_h}. \\
    \end{aligned}
  \end{displaymath}
  Applying the triangle inequality and the approximation estimate
  \eqref{eq_DGaperror} yields the  error estimate
  \eqref{eq_DGerror}.

  We prove the $L^2$ estimate by the dual argument. Let $\phi \in
  H^2(\Omega_0)$ solve the problem 
  \begin{displaymath}
    - \Delta \phi = u - u_h, \  \text{in } \Omega_0, \quad \phi = 0, \ 
    \text{on } \Gamma, 
  \end{displaymath}
  with the regularity estimate $\|\phi \|_{H^2(\Omega)} \leq C \| u -
  u_h \|_{L^2(\Omega)}$. Let $\phi_I$ be the linear interpolant
  of $\phi$ into the space $V_{h, 0}^{m, \circ}$, we have that
  \begin{displaymath}
    \begin{aligned}
      \|u - u_h\|_{L^2(\Omega)}^2  &= a_h(\phi, u - u_h) = a_h(\phi -
      E_{h, 0} \phi_I, u - u_h) \\
      & \leq C \DGnorm{\phi - E_{h, 0} \phi_I} \DGnorm{ u - u_h} \\ 
      & \leq C h \| \phi\|_{H^2(\Omega)} \DGnorm{ u - u_h} \\
      & \leq C h  \|u - u_h\|_{L^2(\Omega)} \DGnorm{ u - u_h}, \\
    \end{aligned}
  \end{displaymath}
  which implies \eqref{eq_L2error} and completes the proof.
\end{proof}
In the rest of this section, we give an upper bound of the condition number of final sparse
linear system, which is still independent of how the boundary $\Gamma$
cuts the mesh. The main ingredient is to prove a Poincar\'e type
inequality. 
\begin{lemma}
  For $0 < h \leq h_0$, there exist constants $C_1, C_2$ such that 
  \begin{equation}
    C_1 \| v_h \|_{L^2(\MThoc)} \leq \DGnorm{v_h} \leq C_2 h^{-1} \|
    v_h \|_{L^2(\MThoc)}, \quad \forall v_h \in V_{h, 0}^{m}.
    \label{eq_L2DGnorm}
  \end{equation}
  \label{le_L2DGnorm}
\end{lemma}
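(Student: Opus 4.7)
The plan is to establish the two inequalities separately. The upper bound $\DGnorm{v_h} \leq C_2 h^{-1} \|v_h\|_{L^2(\MThoc)}$ is essentially an assembly of inverse inequalities. For each $K \in \MThoc$ the standard polynomial inverse inequality gives $\|\nabla v_h\|_{L^2(K)} \leq C h_K^{-1} \|v_h\|_{L^2(K)}$, while on a cut element $K \in \MThG$ the inverse estimate \eqref{eq_dinverse} first transfers the gradient to $K^\circ$, and the standard inverse inequality on $K^\circ$ then yields $\|\nabla v_h\|_{L^2(K \cap \Omega_0)} \leq C h_K^{-1} \|v_h\|_{L^2(K^\circ)}$. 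The face terms $h_e \|\aver{\nabla v_h}\|_{L^2(e \cap \Omega_0)}^2$, $h_e^{-1} \|\jump{v_h}\|_{L^2(e \cap \Omega_0)}^2$, and their $\Gamma_K$-analogues are treated in the same way by combining the discrete trace inequality \eqref{eq_dtrace} (in cut situations) or a standard trace (on purely interior faces) with the volume inverse inequality. Summing, and using quasi-uniformity so that each $K^\circ \in \MThoc$ is the designated interior neighbour of only a uniformly bounded number of cut elements, produces the upper bound.

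For the lower bound $C_1 \|v_h\|_{L^2(\MThoc)} \leq \DGnorm{v_h}$, I first note the trivial $\|v_h\|_{L^2(\MThoc)} \leq \|v_h\|_{L^2(\Omega_0)}$, so it suffices to prove a DG Poincar\'e-type inequality $\|v_h\|_{L^2(\Omega_0)} \leq C \DGnorm{v_h}$. I would use a duality argument. Let $\phi \in H^2(\Omega_0) \cap H^1_0(\Omega_0)$ solve $-\Delta \phi = v_h$ in $\Omega_0$ with $\phi = 0$ on $\Gamma$, which by elliptic regularity (valid since $\Omega_0$ has $C^2$-smooth or convex polygonal boundary) satisfies $\|\phi\|_{H^2(\Omega_0)} \leq C \|v_h\|_{L^2(\Omega_0)}$. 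Elementwise Green's formula on $K \cap \Omega_0$, together with $\jump{v_h}|_{\Gamma_K} = v_h \un$ and $\aver{\nabla \phi}|_{\Gamma_K} = \nabla \phi$ to recast the curved-boundary flux, produces
\begin{displaymath}
\|v_h\|_{L^2(\Omega_0)}^2 = \sum_{K \in \MTho} \int_{K \cap \Omega_0} \nabla v_h \cdot \nabla \phi \d{x} - \sum_{e \in \MEho} \int_{e \cap \Omega_0} \jump{v_h} \cdot \aver{\nabla \phi} \d{s} - \sum_{K \in \MThG} \int_{\Gamma_K} \jump{v_h} \cdot \aver{\nabla \phi} \d{s}.
\end{displaymath}
Applying Cauchy--Schwarz to each sum, together with the $H^1$-trace bound $\sum_e h_e \|\nabla \phi\|_{L^2(e \cap \Omega_0)}^2 \leq C \|\phi\|_{H^2(\Omega_0)}^2$ for interior faces and the curved-boundary trace estimate \eqref{eq_H1trace} for the $\Gamma_K$-terms, gives $\|v_h\|_{L^2(\Omega_0)}^2 \leq C \DGnorm{v_h} \|\phi\|_{H^2(\Omega_0)} \leq C \DGnorm{v_h} \|v_h\|_{L^2(\Omega_0)}$, from which the lower bound follows after dividing by $\|v_h\|_{L^2(\Omega_0)}$.

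I expect the main delicate point to be the element-by-element integration by parts on cut elements and the combinatorics of assembling all trace terms against the dual solution: the crucial trick is to rewrite the $\Gamma_K$ flux as a jump--average pairing, so that it is controlled by the penalty component $\sum h_K^{-1} \|\jump{v_h}\|_{L^2(\Gamma_K)}^2$ of $\DGnorm{v_h}$ via the curved trace inequality \eqref{eq_H1trace}. The inverse and trace estimates powering the upper bound are routine once Lemma \ref{le_dtrace} is in hand.
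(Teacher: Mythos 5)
Your proposal is correct and follows essentially the same route as the paper: the upper bound by assembling the inverse estimate \eqref{eq_dinverse} and the discrete trace estimate \eqref{eq_dtrace} over elements and faces, and the lower bound by a duality argument with the auxiliary problem $-\Delta\phi = v_h$, $\phi|_{\Gamma}=0$, elementwise integration by parts, and the trace bounds on $e\cap\Omega_0$ and $\Gamma_K$ (the paper routes the face/interface trace bounds for $\phi$ through the Sobolev extension $E_0\phi$, a detail you leave implicit but which causes no difficulty).
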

\begin{proof}
  By the inverse inequality \eqref{eq_dinverse}, we immediately have 
  \begin{equation}
    C \| v_h \|_{L^2(\Omega_0)} \leq  \| v_h \|_{L^2(\MThoc)} \leq \|
    v_h \|_{L^2( \Omega_0)}.
    \label{eq_L2L2L2}
  \end{equation}
  Let $\phi \in H^2(\Omega_0)$ be the solution of the   problem 
  \begin{displaymath}
    - \Delta \phi = v_h, \  \text{in } \Omega_0, \quad \phi = 0, \ 
    \text{on } \partial \Omega_0
  \end{displaymath}
  and satisfy $\| \phi \|_{H^2(\Omega_0)} \leq C \| v_h
  \|_{L^2(\Omega_0)}$.  Applying  integration by parts,  we find 
  that 
  \begin{displaymath}
    \begin{aligned}
      \| v_h &\|_{L^2(\Omega_0)}^2  = (-\Delta \phi,
      v_h)_{L^2(\Omega_0)} \\
      & = \sum_{K \in \MTho} (\nabla \phi, \nabla v_h)_{L^2(K \cap
      \Omega_0)} - \sum_{e \in \MEho} (\nabla \phi, \jump{v_h})_{L^2(e
      \cap \Omega_0)} - \sum_{K \in \MThG} (\nabla \phi,
      \jump{v_h})_{L^2(\Gamma_K)} \\
      & \leq C \DGnorm{v_h} \left( | \nabla  \phi|_{L^2(\Omega)}^2 +
      \sum_{e \in \MEho} h_e | \nabla \phi|_{L^2(e \cap \Omega_0)}^2 +
      \sum_{K \in \MThG} h_K | \nabla \phi|_{L^2(\Gamma_K)}^2
      \right)^{1/2}. \\
    \end{aligned}
  \end{displaymath}
  From Lemma \ref{le_H1trace} and the trace estimate, we deduce 
  \begin{displaymath}
    \begin{aligned}
      \sum_{K \in \MThG} h_K | \nabla \phi|_{L^2(\Gamma_K)}^2 \leq C
      \sum_{K \in \MThG} \| E_0 \phi \|_{H^2(K)}^2 \leq C\| \phi
      \|_{H^2(\Omega_0)}^2,
    \end{aligned}
  \end{displaymath}
  and
  \begin{displaymath}
    \begin{aligned}
      \sum_{e \in \MEho} h_e | \nabla \phi|_{L^2(e \cap \Omega_0)}^2
      \leq \sum_{e \in \MEho} h_e | \nabla (E_0 \phi)|_{L^2(e)}^2 \leq 
      C \sum_{K \in \MTho} \| E_0 \phi \|_{H^2(K)} ^2\leq C \| \phi
      \|_{H^2(\Omega_0)}^2.
    \end{aligned}
  \end{displaymath}
 These two inequalities, together with \eqref{eq_L2L2L2} and the
  regularity of $\phi$, imply $C \| v_h \|_{L^2(\MThoc)} \leq
  \DGnorm{v_h}$. Further, the inverse estimate \eqref{eq_dinverse}
  directly leads to 
  \begin{displaymath}
    \sum_{K \in \MTho} \| \nabla v_h \|_{L^2(K \cap \Omega_0)}^2 \leq 
    C h^{-2} \| v_h \|_{L^2(\MThoc)}^2. 
  \end{displaymath}
  Similar to the proof of the coercivity \eqref{eq_coercive}, by 
  the trace estimate and the inverse estimate we 
  can bound the trace terms of $\DGnorm{v_h}$ as follows:
  \begin{displaymath}
    \sum_{e \in \MEhoc} \left( h_e \| \aver{\nabla v_h} \|_{L^2(e \cap
    \Omega_0)}^2 + h_e^{-1}  \| \jump{v_h} \|_{L^2(e \cap \Omega_0)}^2
    \right) \leq C h^{-2} \| v_h \|_{L^2(\MThoc)}^2,
  \end{displaymath}
  \begin{displaymath}
    \sum_{K \in \MThG} \left( h_K \| \aver{\nabla v_h}\|_{L^2(\Gamma_K
    )}^2 + h_K^{-1}  \| \jump{v_h} \|_{L^2(\Gamma_K)}^2 \right)
    \leq C h^{-2} \| v_h \|_{L^2(\MThoc)}^2,
  \end{displaymath}
 which give  $\DGnorm{v_h} \leq C h^{-1} \| v_h \|_{L^2(\MThoc)}$ and
 finish the proof.
\end{proof}

Based on Lemma \ref{le_L2DGnorm}, the upper bound of the condition number of the discrete system can be obtained   similarly   as
in the standard finite element method
\cite{Brenner2007mathematical}. Let $\{ \phi_i\}(1 \leq i \leq N)$ be
the Lagrange basis of the space $V_{h, 0}^{m, \circ}$.  Clearly
$V_{h, 0}^m$ shares the same degrees of freedom and  
basis as that of $V_{h, 0}^{m, \circ}$. Let $A = (a_h(\phi_i, \phi_j))_{N \times N}$ be the resulting
stiff matrix and   $M = (\phi_i, \phi_j)_{N \times N}$ be the global
mass matrix. Then, for any vector $\bmr{v} \in \mb{R}^N$  there are
\begin{displaymath}
  a_h(v_h, v_h) = \bmr{v}^T A \bmr{v}, \quad (v_h, v_h) = \bmr{v}^T M
  \bmr{v}, \quad v_h = \sum_{i = 1}^N v_i \phi_i,
\end{displaymath}
where $\bmr{v} = (v_1, v_2, \ldots, v_N)^T$. 
\begin{theorem}
  For $0 < h \leq h_0$, there exists a constant $C$ such that 
  \begin{equation}
    \kappa(A) \leq Ch^{-2}.
    \label{eq_KA}
  \end{equation}
  \label{th_KA}
\end{theorem}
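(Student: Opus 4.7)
The plan is to follow the classical Courant--style estimate for $\kappa(A)=\lambda_{\max}(A)/\lambda_{\min}(A)$, combining the coercivity and boundedness of $a_h(\cdot,\cdot)$ with the norm equivalence $C_1\|v_h\|_{L^2(\MThoc)}\le \DGnorm{v_h}\le C_2 h^{-1}\|v_h\|_{L^2(\MThoc)}$ from Lemma \ref{le_L2DGnorm}, and with the standard spectral scaling of the interior mass matrix.

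First, I would use Lemma \ref{le_bc} to convert between $\bmr{v}^T A\bmr{v}$ and the energy norm: coercivity gives $\bmr{v}^T A\bmr{v}=a_h(v_h,v_h)\ge C\DGnorm{v_h}^2$, while boundedness applied with $u_h=v_h$ gives $\bmr{v}^T A\bmr{v}\le C\DGnorm{v_h}^2$. Inserting Lemma \ref{le_L2DGnorm} then yields
\begin{displaymath}
  C_1\,\|v_h\|_{L^2(\MThoc)}^2 \;\le\; \bmr{v}^T A\bmr{v}\;\le\; C_2\,h^{-2}\|v_h\|_{L^2(\MThoc)}^2.
\end{displaymath}

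Next, since the degrees of freedom of $V_{h,0}^m$ coincide with those of $V_{h,0}^{m,\circ}$, the global mass matrix $M=((\phi_i,\phi_j))_{N\times N}$ is nothing but the mass matrix of the shape-regular, quasi-uniform piecewise-polynomial space on $\MThoc$. By the standard scaling argument (Brenner--Scott), there are constants independent of $h$ such that
\begin{displaymath}
  c_1 h^d\,\|\bmr{v}\|_{\ell^2}^2 \;\le\; \bmr{v}^T M \bmr{v} = \|v_h\|_{L^2(\MThoc)}^2 \;\le\; c_2 h^d\,\|\bmr{v}\|_{\ell^2}^2,
\end{displaymath}
where the lower bound relies on the fact that every element of $\MThoc$ is a regular simplex of diameter $\sim h$ with a full set of $\mathbb{P}_m$ degrees of freedom, and the upper bound follows from a local $L^\infty$--$L^2$ inverse estimate plus elementwise summation.

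Substituting this into the previous two-sided bound gives $\lambda_{\max}(A)\le C h^{d-2}$ and $\lambda_{\min}(A)\ge C h^d$, whence $\kappa(A)\le C h^{-2}$. The only delicate point, and the reason the result is notable in the unfitted setting, is already handled upstream in Lemma \ref{le_L2DGnorm}: normally one would worry that a cut element $K\in\MThG$ with tiny $|K\cap\Omega_0|$ would force $\lambda_{\min}(M)$ to degenerate, but here $M$ is assembled from the interior basis on $\MThoc$ only, so the arbitrariness of the cuts never enters the mass-matrix scaling, and the proof reduces to the standard finite element argument.
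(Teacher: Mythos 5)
Your proposal is correct and follows essentially the same route as the paper: both combine the coercivity/boundedness of $a_h(\cdot,\cdot)$ from Lemma \ref{le_bc} with the norm equivalence of Lemma \ref{le_L2DGnorm}, and then relate $\|v_h\|_{L^2(\MThoc)}$ to the Euclidean norm of the coefficient vector via the interior mass matrix. The only difference is that you carry the $h^d$ mass-matrix scaling explicitly while the paper absorbs it into the asserted equivalence $\bmr{v}^T\bmr{v}\sim\|v_h\|_{L^2(\MThoc)}^2$; this factor cancels in the condition number either way.
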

\begin{proof}
  We seek the lower and upper bounds of $(\bmr{v}^T A
  \bmr{v})/(\bmr{v}^T \bmr{v}) (\bmr{v} \neq \bm{0})$ to verify
  \eqref{eq_KA}. For any $\bmr{v}$, let $\bm{v}_h = \sum_{i = 1}^N v_i
  \phi_i$,  then $(\bmr{v}^T A \bmr{v})/(\bmr{v}^T \bmr{v})$ can be expressed as 
  \begin{displaymath}
    \begin{aligned}
      \frac{\bmr{v}^T A \bmr{v}}{\bmr{v}^T \bmr{v}} =
      \frac{a_h(v_h, v_h)}{\|v_h \|_{L^2(\Omega_0)}^2}
      \frac{\bmr{v}^T M \bmr{v}}{\bmr{v}^T \bmr{v}}.  
    \end{aligned}
  \end{displaymath}
  From Lemmas \ref{le_bc} and   \ref{le_L2DGnorm},  it follows 
  \begin{displaymath}
    C_1 \| v_h \|_{L^2(\Omega_0)}^2 \leq a_h(v_h, v_h) \leq C_2 h^{-2}
    \| v_h \|_{L^2(\Omega_0)}^2.
  \end{displaymath}
  Since $\bmr{v}$ corresponds to the degrees of freedom of the
  standard finite element space $V_{h, 0}^{m, \circ}$, we can know
  that
  \begin{displaymath}
    C_1 \|\bm{v}_h \|_{L^2(\MThoc)}^2 \leq \bmr{v}^T \bmr{v} \leq 
    C_2 \|\bm{v}_h \|_{L^2(\MThoc)}^2. 
  \end{displaymath}
  Putting all above results and the estimate \eqref{eq_L2L2L2}
  together, we arrive at
  \begin{displaymath}
    C_1 \leq \frac{\bmr{v}^T A \bmr{v}}{\bmr{v}^T \bmr{v}} \leq C_2
    h^{-2},
  \end{displaymath}
  which yields the bound \eqref{eq_KA} and completes the
  proof.
\end{proof}

We have shown that the unfitted scheme \eqref{eq_dvariation} for the   problem \eqref{eq_elliptic} 
 is stable and   can
achieve an arbitrarily high order accuracy  without any mesh adjustment or any special
stabilization technique.  In next section, we will extend this method
to the elliptic interface problem. 

\section{Approximation to Elliptic Interface Problem}
\label{sec_interface}
In this section, we are concerned with the following elliptic interface problem:
seek $u$ such that 
\begin{equation}
  \begin{aligned}
    - \nabla \cdot (\alpha \nabla u) & = f, && \text{in } \Omega_0
    \cup \Omega_1, \\
    u & = g, && \text{on } \partial \Omega, \\
    \jump{u} & = a \un, && \text{on } \Gamma, \\
    \jump{\alpha \nabla u} & = b, && \text{on  } \Gamma. \\
  \end{aligned}
  \label{eq_interfaceproblem}
\end{equation}
Here the definitions of domain $\Omega$ and $\Omega_0$ are consistent
with those in Section \ref{sec_preliminaries}. The domain $\Omega_1$ is
defined as $\Omega_1 := \Omega \backslash \overline{\Omega}_0$, and
$\alpha $ is 
{a} piecewise constant with  $\alpha|_{\Omega_i}=\alpha_i >0 \ (i=0,1)$.    $\Gamma$ is assumed to be $C^2$ smooth and
the domain $\Omega$ is regarded as being divided by the smooth interface
$\Gamma$ into two disjoint subdomains $\Omega_0$ and $\Omega_1$, where
$\Gamma = \partial \Omega_0$, see Fig.~\ref{fig_MThMTh0MTh1}. The data functions are assumed to
satisfy that $f \in L^2(\Omega)$, $g \in H^{3/2}(\partial \Omega)$, $a
\in H^{3/2}(\Gamma)$ and $b \in H^{1/2}(\Gamma)$, which make
\eqref{eq_interfaceproblem} possess a unique solution $u \in
H^2(\Omega_0 \cup \Omega_1)$. We refer to \cite{
Kellogg1972higher,Kellogg1976regularity} for more regularity results to such an interface
problem.

\begin{figure}[htp]
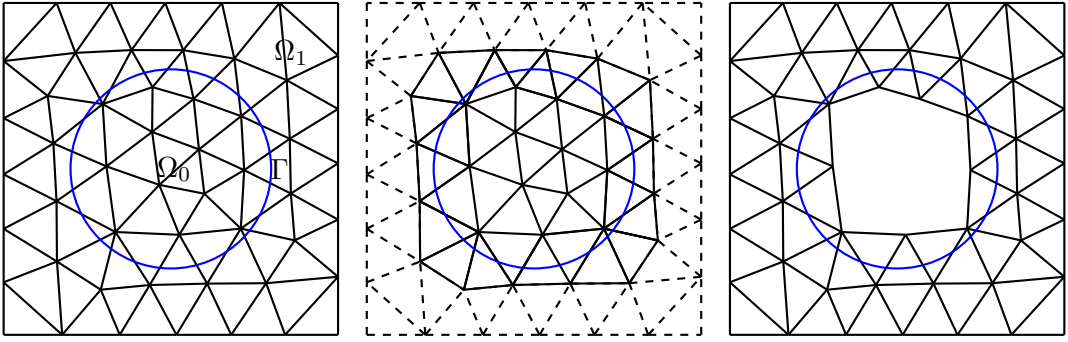

  \centering
  \begin{minipage}[t]{0.31\textwidth}
    \centering
    \begin{tikzpicture}[scale=2.2]
      \input{figure/MTh.tex}
      \draw[blue, thick] (0, 0) circle [radius=0.6]; 
      \node at (0.02, 0) {$\Omega_0$};
      \node at (0.72, 0.7) {$\Omega_1$};
      \node at (0.65, 0) {$\Gamma$};
    \end{tikzpicture}
  \end{minipage} 
  \begin{minipage}[t]{0.31\textwidth}
    \centering
    \begin{tikzpicture}[scale=2.2]
      \input{figure/eiMTh0.tex}
      \draw[blue, thick] (0, 0) circle [radius=0.6]; 
    \end{tikzpicture}
  \end{minipage}
  \begin{minipage}[t]{0.31\textwidth}
    \centering
    \begin{tikzpicture}[scale=2.2]
      \draw[thick,black] (-1, -1) -- (-0.65, -1);
\draw[thick,black] (-0.65, -1) -- (-1, -0.683333333333333);
\draw[thick,black] (-1, -1) -- (-1, -0.683333333333333);
\draw[thick,black] (-0.65, -1) -- (-0.681017500462872, -0.558204896788668);
\draw[thick,black] (-1, -0.683333333333333) -- (-0.681017500462872, -0.558204896788668);
\draw[thick,black] (-1, -0.361111111111111) -- (-0.681017500462872, -0.558204896788668);
\draw[thick,black] (-1, -0.361111111111111) -- (-1, -0.683333333333333);
\draw[thick,black] (-0.681017500462872, -0.558204896788668) -- (-0.41922890740978, -0.728075525788096);
\draw[thick,black] (-0.65, -1) -- (-0.41922890740978, -0.728075525788096);
\draw[thick,black] (-0.681017500462872, -0.558204896788668) -- (-0.68372439519797, -0.194463365458646);
\draw[thick,black] (-1, -0.361111111111111) -- (-0.68372439519797, -0.194463365458646);
\draw[thick,black] (-0.305555555555556, -1) -- (-0.41922890740978, -0.728075525788096);
\draw[thick,black] (-0.65, -1) -- (-0.305555555555556, -1);
\draw[thick,black] (-0.41922890740978, -0.728075525788096) -- (-0.331311786613425, -0.38047369369828);
\draw[thick,black] (-0.681017500462872, -0.558204896788668) -- (-0.331311786613425, -0.38047369369828);
\draw[thick,black] (-0.68372439519797, -0.194463365458646) -- (-0.331311786613425, -0.38047369369828);
\draw[thick,black] (-1, -0.0333333333333332) -- (-1, -0.361111111111111);
\draw[thick,black] (-1, -0.0333333333333332) -- (-0.68372439519797, -0.194463365458646);
\draw[thick,black] (-0.305555555555556, -1) -- (-0.126341050445663, -0.699940012213765);
\draw[thick,black] (-0.126341050445663, -0.699940012213765) -- (-0.41922890740978, -0.728075525788096);
\draw[thick,black] (-0.126341050445663, -0.699940012213765) -- (-0.331311786613425, -0.38047369369828);
\draw[thick,black] (-0.331311786613425, -0.38047369369828) -- (-0.385116189353426, 0.015281650560661);
\draw[thick,black] (-0.68372439519797, -0.194463365458646) -- (-0.385116189353426, 0.015281650560661);
\draw[thick,black] (-1, -0.0333333333333332) -- (-0.702474581762637, 0.153534370725956);
\draw[thick,black] (-0.702474581762637, 0.153534370725956) -- (-0.68372439519797, -0.194463365458646);
\draw[thick,black] (-0.702474581762637, 0.153534370725956) -- (-0.385116189353426, 0.015281650560661);
\draw[thick,black] (-0.305555555555556, -1) -- (0.0333333333333332, -1);
\draw[thick,black] (0.0333333333333332, -1) -- (-0.126341050445663, -0.699940012213765);
\draw[thick,black] (-0.126341050445663, -0.699940012213765) -- (0.0505799685141533, -0.396979325896565);
\draw[thick,black] (0.0505799685141533, -0.396979325896565) -- (-0.331311786613425, -0.38047369369828);
\draw[thick,black] (-1, 0.305555555555556) -- (-0.702474581762637, 0.153534370725956);
\draw[thick,black] (-1, 0.305555555555556) -- (-1, -0.0333333333333332);
\draw[thick,black] (0.0333333333333332, -1) -- (0.217303138112957, -0.691351145215774);
\draw[thick,black] (0.217303138112957, -0.691351145215774) -- (-0.126341050445663, -0.699940012213765);
\draw[thick,black] (0.217303138112957, -0.691351145215774) -- (0.0505799685141533, -0.396979325896565);
\draw[thick,black] (-0.702474581762637, 0.153534370725956) -- (-0.407739386523686, 0.390804950287482);
\draw[thick,black] (-0.407739386523686, 0.390804950287482) -- (-0.385116189353426, 0.015281650560661);
\draw[thick,black] (-0.702474581762637, 0.153534370725956) -- (-0.736374344891963, 0.439575866031186);
\draw[thick,black] (-1, 0.305555555555556) -- (-0.736374344891963, 0.439575866031186);
\draw[thick,black] (-0.736374344891963, 0.439575866031186) -- (-0.407739386523686, 0.390804950287482);
\draw[thick,black] (0.0333333333333332, -1) -- (0.361111111111111, -1);
\draw[thick,black] (0.361111111111111, -1) -- (0.217303138112957, -0.691351145215774);
\draw[thick,black] (0.0505799685141533, -0.396979325896565) -- (0.418488591484342, -0.359685730686808);
\draw[thick,black] (0.217303138112957, -0.691351145215774) -- (0.418488591484342, -0.359685730686808);
\draw[thick,black] (-1, 0.65) -- (-1, 0.305555555555556);
\draw[thick,black] (-1, 0.65) -- (-0.736374344891963, 0.439575866031186);
\draw[thick,black] (-0.407739386523686, 0.390804950287482) -- (-0.109476768044487, 0.492734073236539);
\draw[thick,black] (-0.736374344891963, 0.439575866031186) -- (-0.570731464576903, 0.699216048312196);
\draw[thick,black] (-0.570731464576903, 0.699216048312196) -- (-0.407739386523686, 0.390804950287482);
\draw[thick,black] (0.217303138112957, -0.691351145215774) -- (0.569823110531106, -0.688910763924027);
\draw[thick,black] (0.361111111111111, -1) -- (0.569823110531106, -0.688910763924027);
\draw[thick,black] (0.569823110531106, -0.688910763924027) -- (0.418488591484342, -0.359685730686808);
\draw[thick,black] (-1, 0.65) -- (-0.570731464576903, 0.699216048312196);
\draw[thick,black] (0.418488591484342, -0.359685730686808) -- (0.438537395964999, -0.00743031881140727);
\draw[thick,black] (0.13611041161168, 0.421257453635345) -- (-0.109476768044487, 0.492734073236539);
\draw[thick,black] (-0.235010081461715, 0.716088451794265) -- (-0.407739386523686, 0.390804950287482);
\draw[thick,black] (-0.235010081461715, 0.716088451794265) -- (-0.109476768044487, 0.492734073236539);
\draw[thick,black] (-0.235010081461715, 0.716088451794265) -- (-0.570731464576903, 0.699216048312196);
\draw[thick,black] (0.683333333333333, -1) -- (0.569823110531106, -0.688910763924027);
\draw[thick,black] (0.361111111111111, -1) -- (0.683333333333333, -1);
\draw[thick,black] (0.740807858779759, -0.430817581990893) -- (0.418488591484342, -0.359685730686808);
\draw[thick,black] (0.569823110531106, -0.688910763924027) -- (0.740807858779759, -0.430817581990893);
\draw[thick,black] (-0.683333333333333, 1) -- (-1, 0.65);
\draw[thick,black] (-0.683333333333333, 1) -- (-0.570731464576903, 0.699216048312196);
\draw[thick,black] (0.717699833536028, -0.148461769814811) -- (0.418488591484342, -0.359685730686808);
\draw[thick,black] (0.717699833536028, -0.148461769814811) -- (0.438537395964999, -0.00743031881140727);
\draw[thick,black] (0.420522653546179, 0.318138302188572) -- (0.13611041161168, 0.421257453635345);
\draw[thick,black] (0.420522653546179, 0.318138302188572) -- (0.438537395964999, -0.00743031881140727);
\draw[thick,black] (0.717699833536028, -0.148461769814811) -- (0.740807858779759, -0.430817581990893);
\draw[thick,black] (0.0736303644514813, 0.715481150106727) -- (0.13611041161168, 0.421257453635345);
\draw[thick,black] (0.0736303644514813, 0.715481150106727) -- (-0.109476768044487, 0.492734073236539);
\draw[thick,black] (-0.235010081461715, 0.716088451794265) -- (0.0736303644514813, 0.715481150106727);
\draw[thick,black] (-0.361111111111111, 1) -- (-0.235010081461715, 0.716088451794265);
\draw[thick,black] (-0.361111111111111, 1) -- (-0.570731464576903, 0.699216048312196);
\draw[thick,black] (-0.683333333333333, 1) -- (-1, 1);
\draw[thick,black] (-1, 1) -- (-1, 0.65);
\draw[thick,black] (-0.361111111111111, 1) -- (-0.683333333333333, 1);
\draw[thick,black] (0.683333333333333, -1) -- (1, -0.65);
\draw[thick,black] (1, -0.65) -- (0.569823110531106, -0.688910763924027);
\draw[thick,black] (1, -0.65) -- (0.740807858779759, -0.430817581990893);
\draw[thick,black] (0.379547875026244, 0.664627697913229) -- (0.420522653546179, 0.318138302188572);
\draw[thick,black] (0.379547875026244, 0.664627697913229) -- (0.13611041161168, 0.421257453635345);
\draw[thick,black] (0.0736303644514813, 0.715481150106727) -- (0.379547875026244, 0.664627697913229);
\draw[thick,black] (0.71125978886655, 0.181561221832313) -- (0.717699833536028, -0.148461769814811);
\draw[thick,black] (0.71125978886655, 0.181561221832313) -- (0.438537395964999, -0.00743031881140727);
\draw[thick,black] (0.71125978886655, 0.181561221832313) -- (0.420522653546179, 0.318138302188572);
\draw[thick,black] (-0.0333333333333332, 1) -- (-0.235010081461715, 0.716088451794265);
\draw[thick,black] (-0.0333333333333332, 1) -- (0.0736303644514813, 0.715481150106727);
\draw[thick,black] (-0.0333333333333332, 1) -- (-0.361111111111111, 1);
\draw[thick,black] (1, -0.305555555555556) -- (0.717699833536028, -0.148461769814811);
\draw[thick,black] (1, -0.305555555555556) -- (0.740807858779759, -0.430817581990893);
\draw[thick,black] (0.683333333333333, -1) -- (1, -1);
\draw[thick,black] (1, -1) -- (1, -0.65);
\draw[thick,black] (1, -0.65) -- (1, -0.305555555555556);
\draw[thick,black] (1, 0.0333333333333332) -- (0.71125978886655, 0.181561221832313);
\draw[thick,black] (1, 0.0333333333333332) -- (0.717699833536028, -0.148461769814811);
\draw[thick,black] (0.693308227213339, 0.534598649713033) -- (0.379547875026244, 0.664627697913229);
\draw[thick,black] (0.693308227213339, 0.534598649713033) -- (0.420522653546179, 0.318138302188572);
\draw[thick,black] (1, -0.305555555555556) -- (1, 0.0333333333333332);
\draw[thick,black] (0.71125978886655, 0.181561221832313) -- (0.693308227213339, 0.534598649713033);
\draw[thick,black] (0.305555555555556, 1) -- (0.0736303644514813, 0.715481150106727);
\draw[thick,black] (0.305555555555556, 1) -- (0.379547875026244, 0.664627697913229);
\draw[thick,black] (0.305555555555556, 1) -- (-0.0333333333333332, 1);
\draw[thick,black] (1, 0.0333333333333332) -- (1, 0.361111111111111);
\draw[thick,black] (1, 0.361111111111111) -- (0.71125978886655, 0.181561221832313);
\draw[thick,black] (1, 0.361111111111111) -- (0.693308227213339, 0.534598649713033);
\draw[thick,black] (0.65, 1) -- (0.693308227213339, 0.534598649713033);
\draw[thick,black] (0.65, 1) -- (0.379547875026244, 0.664627697913229);
\draw[thick,black] (0.65, 1) -- (0.305555555555556, 1);
\draw[thick,black] (1, 0.683333333333333) -- (0.693308227213339, 0.534598649713033);
\draw[thick,black] (1, 0.361111111111111) -- (1, 0.683333333333333);
\draw[thick,black] (1, 0.683333333333333) -- (0.65, 1);
\draw[thick,black] (1, 0.683333333333333) -- (1, 1);
\draw[thick,black] (1, 1) -- (0.65, 1);
      \draw[blue, thick] (0, 0) circle [radius=0.6]; 
    \end{tikzpicture}
  \end{minipage}
  \caption{The domain and the meshes $\MTh$ (left) /  $\MTh^0$
  (middle) /   $\MTh^1$ (right).}
  \label{fig_MThMTh0MTh1}
\end{figure}

Given the partition $\MTh$ (see the definition in Section
\ref{sec_preliminaries}), we introduce the following notations related
to the partition that will be used in this section (see
Fig.~\ref{fig_MThMTh0MTh1}):
\begin{displaymath}
  \begin{aligned}
    \MThl &:= \{ K \in \MTh \ | \ K \cap \Omega_1 \neq \varnothing\},
    \quad \MThlc :=  \{ K \in \MThl \ | \ K \subset \Omega_1 \}, \\
    \MEhl & := \{ e \in \MEh \ | \ e \cap \Omega_1 \neq \varnothing
    \}, \quad \MEhlc := \{ e \in \MEhl \ | \ e \subset \Omega_1 \}, \\
\end{aligned}
\end{displaymath}
and the notations $\MTho, \MThoc, \MThG, \MEhG$ follow the same
definitions as in Section \ref{sec_preliminaries}. Obviously, $\MThic
= \MThi \backslash \MThG (i = 0, 1)$.  For any element $K \in \MTh$
and any face $e \in \MEh$, we define 
\begin{displaymath}
  K^0 := K \cap \Omega_0, \quad K^1 := K \cap \Omega_1, \quad 
  e^0 := e \cap \Omega_0, \quad e^1 := e \cap \Omega_1. \\
\end{displaymath}
We suppose that   Assumption \ref{as_mesh2} holds individually for
$\MTho$ and $\MThl$, which reads
\begin{assumption}
  For any element $K \in \MThG$, there are two elements $K^\circ_0,
  K^\circ_1 \in \Delta(K)$ satisfying $K^\circ_0 \in \MThoc$ and
  $K^\circ_1 \in \MThlc$. 
  \label{as_mesh3}
\end{assumption}
The trace operators in \eqref{eq_traceG} on the interface $\Gamma$ are
specified as 
\begin{equation}
  \begin{aligned}
    \aver{v}|_{\Gamma_K} &:= \frac{1}{2}(v^0|_{\Gamma_K} +
    v^1|_{\Gamma_K}), \quad   \jump{v}|_{\Gamma_K} := (v^0 - v^1)\un,
    \\
    \aver{\bm{q}}|_{\Gamma_K} &:= \frac{1}{2}(\bm{q}^0|_{\Gamma_K} +
    \bm{q}^1|_{\Gamma_K}), \quad   \jump{\bm{q}}|_{\Gamma_K} :=
    (\bm{q}^0 - \bm{q}^1) \cdot \un, \quad 
  \end{aligned}
  \label{eq_ei_trace}
\end{equation}
for any $K \in \MThG$, where $v^0 = v|_{K^0}, v^1 = v|_{K^1}, \bm{q}^0
= \bm{q}|_{K^0}, \bm{q}^1 = \bm{q}|_{K^1}$ and $\un$ denotes the unit
normal vector on $\Gamma_K$ pointing to $\Omega_1$.  

Let us define the approximation space. For $i = 0, 1$, we let $V_{h,
i}^{m, \circ}$ be the $C^0$ finite element space or the discontinuous
finite element space with respect to the partition $\MThic$. Note that  the
spaces $V_{h, i}^{m, \circ}$ will be extended to $\MTh$ in a similar
way as in Section \ref{sec_preliminaries}. Let the extension operator
$E_h$ be piecewise defined as
\begin{equation}
  (E_h(v_{h, 0}, v_{h, 1}))|_K := \begin{cases}
    (v_{h, 0})|_K, & \forall K \in \MThoc, \\
    (v_{h, 1})|_K, & \forall K \in \MThlc, \\
    (E_{K^\circ_0} v_{h, 0})|_{K^0}, & \forall K \in \MThG, \\
    (E_{K^\circ_1} v_{h, 1})|_{K^1}, & \forall K \in \MThG, \\
  \end{cases}
  \label{eq_ei_Eh}
\end{equation}
for any $v_{h, 0} \in V_{h, 0}^{m, \circ}$ and any $v_{h, 1} \in V_{h,
1}^{m, \circ}$, where $K^\circ_i$ are the associated elements in
Assumption \ref{as_mesh3} and $E_K$ is the local extension operator given in
\eqref{eq_Ek}. We denote by $V_h^m$ the image space of $E_h$,
\begin{displaymath}
 V_h^m := \{ E_h(v_{h, 0}, v_{h, 1}) \ | \  \forall v_{h, 0} \in V_{h,
 0}^{m, \circ}, \ \forall v_{h, 1} \in V_{h, 1}^{m, \circ} \}.
\end{displaymath}
The space $V_h^m$ is actually the approximation space that will be
applied in numerically solving the interface problem
\eqref{eq_interfaceproblem}.  The space $V_h^m$ is a combination of
the extensions of spaces $V_{h, 0}^{m, \circ}$ and $V_{h, 1}^{m,
\circ}$. In addition, the degrees of freedom of $V_h^m$ are formed by
all degrees of freedom of $V_{h, 0}^{m, \circ}$ and $V_{h,
1}^{m, \circ}$, which are entirely located in $\Omega_0$ and
  $\Omega_1$, respectively. 

The discrete variational problem for 
\eqref{eq_interfaceproblem} reads: seek $u_h \in V_h^m$ such that 
\begin{equation}
  a_h(u_h, v_h) = l_h(v_h), \quad \forall v_h \in V_h^m, 
  \label{eq_ei_dvariance}
\end{equation}
where
\begin{eqnarray}
    a_h(u_h, v_h) &:=& \sum_{K \in \MTh} \int_{K^0 \cup K^1} \nabla u_h
    \cdot \nabla v_h \d{x}  \nonumber\\
    &&- \sum_{e \in \MEh} \int_{e^0 \cup e^1} \left( \aver{\alpha \nabla u_h}
    \cdot \jump{v_h}  +
    \aver{\alpha \nabla v_h} \cdot \jump{u_h} - \frac{\eta}{h_e} \jump{u_h} \cdot \jump{v_h}
    \right)\d{s}\\
    &&- \sum_{K \in \MThG} \int_{\Gamma_K}\left( \aver{\alpha \nabla u_h}
    \cdot \jump{v_h} +
    \aver{\alpha \nabla v_h} \cdot \jump{u_h} - \frac{\eta}{h_K} \jump{u_h} \cdot
    \jump{v_h} \right)\d{s}, \nonumber
  \label{eq_ei_bilinear}
\end{eqnarray}
for any $u_h, v_h \in V_h := V_h^m + H^2(\Omega_0 \cup \Omega_1)$, and
\begin{displaymath}
  \begin{aligned}
    l_h(v_h) := & \sum_{K \in \MTh} \int_{K^0 \cup K^1} f v_h \d{x} -
    \sum_{e \in \MEhb} \int_{e} \aver{\alpha \nabla v_h} \cdot \un g
    \d{s} + \sum_{e \in \MEhb} \int_{e} \frac{\eta}{h_e} g v_h \d{s}
    \\ 
    + & \sum_{K \in \MThG} \int_{\Gamma_K} b \aver{v_h} \d{s} -
    \sum_{K \in \MThG} \int_{\Gamma_K} \aver{\alpha \nabla v_h} \cdot
    \un a \d{s} + \sum_{K \in \MThG} \int_{\Gamma_K}
    \frac{\eta}{h_K} \jump{v_h} \cdot \un a \d{s}, \\
  \end{aligned}
\end{displaymath}
with the penalty parameter $\eta$. 

\begin{remark}
  If $V_{h, 0}^{m, \circ}$ and $V_{h, 1}^{m, \circ}$ are
  $C^0$ finite element spaces, then the terms 
  \begin{displaymath}
    \sum_{e \in \MEh} \int_{e^0 \cup e^1} \aver{\alpha \nabla u_h}
    \cdot \jump{v_h} \d{s}, \quad \sum_{e \in \MEh} \int_{e^0 \cup
    e^1} \aver{\alpha \nabla v_h} \cdot \jump{u_h} \d{s}, \quad
    \sum_{e \in \MEh} \int_{e^0 \cup e^1} \frac{\eta}{h_e} \jump{u_h}
    \cdot \jump{v_h} \d{s}
  \end{displaymath}
  in the bilinear form $a_h(\cdot, \cdot)$ can be simplified as 
  \begin{displaymath}
    \sum_{e \in \MEhG} \int_{e^0 \cup e^1} \aver{\alpha \nabla u_h}
    \cdot \jump{v_h} \d{s}, \quad \sum_{e \in \MEhG} \int_{e^0 \cup
    e^1} \aver{\alpha \nabla v_h} \cdot \jump{u_h} \d{s}, \quad
    \sum_{e \in \MEhG} \int_{e^0 \cup e^1} \frac{\eta}{h_e} \jump{u_h}
    \cdot \jump{v_h} \d{s}, 
  \end{displaymath}
  respectively.
  \label{re_ei_c0space}
\end{remark}
{
\begin{remark}
  As in Remark \ref{re_nipg}, the trace term $\int_{e^0 \cup e^1}
  \aver{\alpha \nabla v_h} \cdot \jump{u_h} \d{s}$ and
  $\int_{\Gamma_K} \aver{\alpha \nabla v_h} \cdot \jump{u_h} \d{s}$ in
  \eqref{eq_bilinear} can also be substituted respectively with $\int_{e^0 \cup e^1
  } - \aver{\alpha \nabla v_h} \cdot \jump{u_h} \d{s}$ and
  $\int_{\Gamma_K} - \aver{\alpha \nabla v_h} \cdot \jump{u_h} \d{s}$,
  which leads to  the non-symmetric scheme. The estimate
  \eqref{eq_ei_DGerror} can also be validated with any $\eta > 0$
  by following  the analysis of the symmetric case.
\end{remark}}

We introduce the energy norm $\DGenorm{\cdot}$ on $V_h$: 
\begin{displaymath}
  \begin{aligned}
    \DGenorm{v_h}^2 := \sum_{K \in \MTh} \| \nabla v_h \|_{L^2(K^0
    \cup K^1)}^2 + &\sum_{e \in \MEh} h_e \| \aver{\nabla v_h}
    \|_{L^2(e^0 \cup e^1)}^2 +  \sum_{e \in \MEh} h_e^{-1} \|
    \jump{v_h} \|_{L^2(e^0 \cup e^1)}^2 \\
    + & \sum_{K \in \MThG} h_K \| \aver{ \nabla v_h}
    \|_{L^2(\Gamma_K)}^2 + \sum_{K \in \MThG} h_K^{-1} \| \jump{v_h}
    \|_{L^2(\Gamma_K)}^2,
  \end{aligned}
\end{displaymath}
for any $v_h \in V_h$. Note that this norm is an direct extension of the norm
$\DGnorm{\cdot}$ defined in Section \ref{sec_elliptic}. 

The trace estimate \eqref{eq_dtrace} and the inverse estimate
\eqref{eq_dinverse} also hold for the space $V_h^m$:
\begin{lemma}
  For $i = 0, 1$,  there exists a
  constant $C$ such that for any element $K \in \MThG$,
  \begin{displaymath}
    \| D^\alpha v_h \|_{L^2( (\partial K)^i)} \leq C h_K^{-1/2} \|
    D^\alpha v_h \|_{L^2(K_i^\circ)}, \quad \forall v_h \in V_h^m,
    \quad \alpha = 0, 1, 
  \end{displaymath}
  \begin{displaymath}
    \|D^\alpha v_h \|_{L^2(K^i)} \leq C \| D^\alpha v_h
    \|_{L^2(K_i^{\circ})}, \quad \forall v_h \in V_h^m, \quad \alpha =
    0, 1,
  \end{displaymath}
  where $(\partial K)^i = (\partial K \cap \Omega_i) \cup \Gamma_K$.
  \label{le_ei_dtrace}
\end{lemma}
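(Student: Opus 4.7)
The plan is to mimic the proof of Lemma \ref{le_dtrace} essentially verbatim, applying the argument once for each index $i \in \{0,1\}$. By Assumption \ref{as_mesh3}, every cut element $K \in \MThG$ has an assigned interior neighbour $K_i^\circ \in \Delta(K) \cap \MThic$ on each side of $\Gamma$, and by construction \eqref{eq_ei_Eh} the restriction $v_h|_{K^i}$ coincides with a polynomial $p_i \in \mb{P}_m(B(\bm{x}_{K_i^\circ}, C_\Delta h_{K_i^\circ}))$ obtained via the local extension $E_{K_i^\circ}$. Thus both $(\partial K)^i$ and $K^i$ lie inside the ball $B(\bm{x}_{K_i^\circ}, C_\Delta h_{K_i^\circ})$, while the inscribed ball $B(\bm{x}_{K_i^\circ}, \rho_{K_i^\circ})$ sits inside $K_i^\circ$.

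The first step is to use polynomial norm equivalence on balls: on the unit ball $B(0,1)$ all norms on $\mb{P}_m$ are equivalent, and by mesh regularity the ratio $\rho_{K_i^\circ}/(C_\Delta h_{K_i^\circ})$ is bounded below uniformly in $K$. Pulling back through the affine map from $B(\bm{x}_{K_i^\circ}, C_\Delta h_{K_i^\circ})$ to $B(0,1)$ and then restricting to the inscribed ball in $K_i^\circ$ yields, for $\alpha = 0, 1$, an $L^\infty$ control of $D^\alpha p_i$ on $B(\bm{x}_{K_i^\circ}, C_\Delta h_{K_i^\circ})$ by a scaled $L^2$ norm of $D^\alpha p_i$ on $K_i^\circ$, with scaling exponent $-d/2$ in $h_K$.

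The second step converts this $L^\infty$ control into the two claimed bounds by multiplying by the appropriate measure factors: $|(\partial K)^i|^{1/2} \leq C h_K^{(d-1)/2}$ for the trace estimate and $|K^i|^{1/2} \leq C h_K^{d/2}$ for the inverse estimate. The measure bound on $(\partial K)^i$ is valid because $\Gamma$ is $C^2$-smooth (exactly the fact invoked in Lemma \ref{le_dtrace} via the reference to \cite{Wu2012unfitted}), and $h_{K_i^\circ} \simeq h_K$ by mesh quasi-uniformity. This produces the factor $h_K^{-1/2}$ on the trace side and a constant independent of $h$ on the volumetric side, matching the stated inequalities.

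I do not expect any genuine obstacle: the only novelty compared to Lemma \ref{le_dtrace} is that there are now two interior neighbours $K_0^\circ$ and $K_1^\circ$ instead of one, and Assumption \ref{as_mesh3} guarantees both exist with the required shape-regular geometry. Since $p_i$ is a single polynomial on the ball covering both the portion $\partial K \cap \Omega_i$ and the piece $\Gamma_K$ of the interface, the two components of $(\partial K)^i$ are handled together, and no delicate treatment of how $\Gamma$ cuts $K$ enters anywhere.
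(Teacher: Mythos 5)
Your proposal is correct and follows essentially the same route as the paper: the paper's own proof simply observes that the case $i=0$ is Lemma \ref{le_dtrace} verbatim and that $i=1$ follows by the same routine using the second interior neighbour $K_1^\circ$ from Assumption \ref{as_mesh3}, which is exactly the argument you spell out. Your accounting of the scaling factors ($|(\partial K)^i|^{1/2}\leq C h_K^{(d-1)/2}$ versus $|K^i|^{1/2}\leq C h_K^{d/2}$ against the $h_K^{-d/2}$ from the $L^\infty$--$L^2$ inverse estimate on the ball) matches the paper's computation.
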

\begin{proof}
  For $i = 0$, this result is the same as Lemma \ref{le_dtrace}, and the  case $i = 1$ follows from 
  the same routine as in the proof of Lemma \ref{le_dtrace}. 
\end{proof}
From Lemma \ref{le_ei_dtrace}, the bilinear form $a_h(\cdot, \cdot)$
is bounded and coercive under the energy norm $\DGenorm{\cdot}$. 
\begin{lemma}
  Let $a_h(\cdot, \cdot)$ be defined as \eqref{eq_ei_bilinear}, there
  exists a constant $C$ such that 
  \begin{equation}
    |a_h(u, v)| \leq C \DGenorm{u} \DGenorm{v}, \quad \forall u, v \in
    V_h, 
    \label{eq_ei_bounded}
  \end{equation}
  and with a sufficiently large $\eta$, there exists a constant $C$
  such that 
  \begin{equation}
    a_h(v_h, v_h) \geq C \DGenorm{v_h}^2, \quad \forall v_h \in V_h^m.
    \label{eq_ei_coercive}
  \end{equation}
  \label{le_ei_bc}
\end{lemma}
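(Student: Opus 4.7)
The plan is to adapt the proof of Lemma~\ref{le_bc} essentially verbatim to the two-sided interface setting, replacing Lemma~\ref{le_dtrace} by Lemma~\ref{le_ei_dtrace}. The boundedness \eqref{eq_ei_bounded} is the easy part: for each of the four trace terms in \eqref{eq_ei_bilinear}, Cauchy--Schwarz with the splitting $h_e^{1/2}\cdot h_e^{-1/2}$ (respectively $h_K^{1/2}\cdot h_K^{-1/2}$ on $\Gamma_K$) produces an $\aver{\nabla\cdot}$-factor scaled by $h_e^{1/2}$ and a $\jump{\cdot}$-factor scaled by $h_e^{-1/2}$, which together with the bulk gradient assemble into $\DGenorm{u_h}\DGenorm{v_h}$; the piecewise-constant coefficient $\alpha$ contributes only the harmless factor $\max(\alpha_0,\alpha_1)$.

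For coercivity \eqref{eq_ei_coercive} I would follow the three-step strategy of Lemma~\ref{le_bc}. First, introduce the weaker norm
$$\DGesnorm{w_h}^2 := \sum_{K\in\MTh}\|\nabla w_h\|_{L^2(K^0\cup K^1)}^2 + \sum_{e\in\MEh}h_e^{-1}\|\jump{w_h}\|_{L^2(e^0\cup e^1)}^2 + \sum_{K\in\MThG}h_K^{-1}\|\jump{w_h}\|_{L^2(\Gamma_K)}^2,$$
and prove the equivalence $\DGenorm{w_h}\leq C\DGesnorm{w_h}$ on $V_h^m$. This amounts to bounding the two average contributions in $\DGenorm{w_h}^2$. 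For the interface sum, Lemma~\ref{le_ei_dtrace} applied on each side of $\Gamma$ yields $\sum_{K\in\MThG}h_K\|\aver{\nabla w_h}\|_{L^2(\Gamma_K)}^2\leq C\sum_{K\in\MTh}\|\nabla w_h\|_{L^2(K^0\cup K^1)}^2$. For a face $e\in\MEh$ shared by $K_1,K_2$, I expand $\aver{\nabla w_h}$ into contributions from $(\partial K_j)^0\cup (\partial K_j)^1$ and invoke Lemma~\ref{le_ei_dtrace} when $K_j\in\MThG$ or the standard trace estimate when $K_j\in\MThic$; summing with the finite-overlap mesh regularity gives $\sum_{e\in\MEh}h_e\|\aver{\nabla w_h}\|_{L^2(e^0\cup e^1)}^2\leq C\DGesnorm{w_h}^2$.

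Second, the cross terms $-2\int \aver{\alpha \nabla v_h}\cdot \jump{v_h}\,ds$ on $e^0\cup e^1$ and $\Gamma_K$ are split by Cauchy--Schwarz with a free parameter $\varepsilon>0$; the resulting $h_e\|\aver{\alpha\nabla v_h}\|^2$ (and $h_K\|\aver{\alpha\nabla v_h}\|^2$) piece is then absorbed into the bulk gradient via the equivalence just established. Collecting all contributions in $a_h(v_h,v_h)$ produces, for constants $C_1,C_2,C_3$,
$$a_h(v_h,v_h)\geq (1-\varepsilon C_1)\sum_{K\in\MTh}\|\nabla v_h\|_{L^2(K^0\cup K^1)}^2+(\eta-C_2/\varepsilon)\sum_{e\in\MEh}h_e^{-1}\|\jump{v_h}\|_{L^2(e^0\cup e^1)}^2+(\eta-C_3/\varepsilon)\sum_{K\in\MThG}h_K^{-1}\|\jump{v_h}\|_{L^2(\Gamma_K)}^2.$$
Choosing $\varepsilon=1/(2C_1)$ and $\eta$ sufficiently large yields $a_h(v_h,v_h)\geq C\DGesnorm{v_h}^2$, and the norm equivalence then gives \eqref{eq_ei_coercive}.

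The only real obstacle is organizational: every interior face must be split as $e^0\cup e^1$, and when invoking Lemma~\ref{le_ei_dtrace} one has to keep track of the correct associated interior element $K^\circ_0$ or $K^\circ_1$ on each side of $\Gamma$ (both cases being possible simultaneously when both $K_1,K_2\in\MThG$). No new analytic ingredient beyond Lemma~\ref{le_ei_dtrace} is required---the argument is structurally identical to that of Lemma~\ref{le_bc}.
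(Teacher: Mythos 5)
Your proposal is correct and follows essentially the same route as the paper: Cauchy--Schwarz for boundedness, then the weaker norm $\DGesnorm{\cdot}$, the norm equivalence via Lemma \ref{le_ei_dtrace}, and Young's inequality with a free parameter to absorb the average terms for a sufficiently large $\eta$. The only cosmetic difference is that the paper reuses the one-sided inequality already established in the proof of Lemma \ref{le_bc} for each subdomain $i=0,1$ and sums, whereas you rerun the $\varepsilon$-argument globally; the two are interchangeable.
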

\begin{proof}
  The proof is analogous to that of Lemma \ref{le_bc}.
  Applying the Cauchy-Schwarz inequality and  the definition
  of $\DGenorm{\cdot}$  immediately gives the   estimate
  \eqref{eq_ei_bounded}. 

  To obtain the coercivity \eqref{eq_ei_coercive}, we   introduce a
  weaker norm $\DGesnorm{\cdot}$ 
  \begin{displaymath}
    \DGesnorm{w_h}^2 := \sum_{K \in \MTh} \| \nabla w_h \|_{L^2(K^0
    \cup K^1)}^2 +  \sum_{e \in \MEh} h_e^{-1} \| \jump{w_h}
    \|_{L^2(e^0 \cup e^1)}^2 + \sum_{K \in \MThG} h_K^{-1} \|
    \jump{w_h} \|_{L^2(\Gamma_K)}^2,  
  \end{displaymath}
  for any $w_h \in V_h^m$.  The equivalence between $\DGnorm{\cdot}$
  and $\DGsnorm{\cdot}$ in Lemma \ref{le_bc} can be easily extended to
  $\DGenorm{\cdot}$ and $\DGesnorm{\cdot}$. Hence, it is sufficient to
  verify \eqref{eq_ei_coercive} under the norm $\DGesnorm{\cdot}$.
  From Lemma \ref{le_bc}, we actually have proven that for $i = 0$,
  there holds
  \begin{displaymath}
    \begin{aligned}
      \sum_{K \in \MTh^i} \| &\nabla v_h \|_{L^2(K^i)}^2 - \sum_{e \in
      \MEh^i} \int_{e^i} \aver{\alpha \nabla v_h} \cdot \jump{v_h}
      \d{s}  - \sum_{K \in \MThG} \int_{\Gamma_K \cap K^i}
      \aver{\alpha \nabla v_h} \cdot \jump{v_h} \d{s}\\
      + & \sum_{e \in \MEh^i} \eta h_e^{-1} \| \jump{v_h}
      \|_{L^2(e^i)}^2 + \sum_{K \in \MThG} \eta h_K^{-1} \| \jump{v_h}
      \|_{L^2(\Gamma_K)}^2 \\
      & \geq C \left( \sum_{K \in \MTh^i} \| \nabla v_h
      \|_{L^2(K^i)}^2 +  \sum_{e \in \MEh^i} \eta h_e^{-1} \|
      \jump{v_h} \|_{L^2(e^i)}^2 + \sum_{K \in \MThG} \eta h_K^{-1} \|
      \jump{v_h} \|_{L^2(\Gamma_K)}^2  \right),
    \end{aligned}
  \end{displaymath}
  with a sufficient large penalty $\eta$. Note that the above estimate
  can be shown to be valid for $i = 1$ by the same skill based
  on Lemma \ref{le_ei_dtrace}. Combining the above estimates for $i =
  0, 1$ and the definition of $\DGesnorm{\cdot}$ immediately yields
  the inequality \eqref{eq_ei_coercive}, which completes the proof.
\end{proof}
The   proof of Lemma \ref{le_Galerkinorth} also gives the Galerkin orthogonality   for this problem.
\begin{lemma}
  Let $u \in H^2(\Omega_0 \cup \Omega_1)$ be the exact solution to the
  problem \eqref{eq_interfaceproblem}, and let $u_h \in V_h^m$ be the
  numerical solution to the problem \eqref{eq_ei_dvariance}.  There
  holds
  \begin{displaymath}
    a_h(u - u_h, v_h) = l_h(v_h), \quad \forall v_h \in V_h^m.
  \end{displaymath}
  \label{le_ei_Galerkinorth}
\end{lemma}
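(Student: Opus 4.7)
The plan is to mimic the argument of Lemma \ref{le_Galerkinorth} with careful bookkeeping of the interface contributions. As there, the core task is to establish the consistency identity $a_h(u, v_h) = l_h(v_h)$ for the exact solution $u$; combined with the discrete equation \eqref{eq_ei_dvariance}, this immediately gives the desired orthogonality.

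First I would substitute $u$ into $a_h(u, v_h)$. Since $u|_{\Omega_i} \in H^2(\Omega_i)$ but $u$ is only piecewise smooth across $\Gamma$, I would apply integration by parts on each element $K \in \MTh$ separately on $K^0$ and on $K^1$. Using the PDE $-\nabla \cdot (\alpha \nabla u) = f$, each volume integral $\int_{K^i} \alpha \nabla u \cdot \nabla v_h$ becomes $\int_{K^i} f v_h$ plus a boundary integral over $\partial K^i$.

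Next I would regroup the boundary contributions according to whether they live on (i) an interior face piece $e^i$ with $e \in \MEhc$, (ii) a boundary face $e \in \MEhb$, or (iii) an interface piece $\Gamma_K$. On each such piece I would apply the standard DG identity
\begin{equation*}
  \bm{q}^+ \cdot \un^+ v^+ + \bm{q}^- \cdot \un^- v^- = \aver{\bm{q}} \cdot \jump{v} + \jump{\bm{q}} \aver{v}.
\end{equation*}
On type (i), the $H^2$-regularity of $u$ inside each subdomain forces $\jump{u} = 0$ and $\jump{\alpha \nabla u} = 0$ on $e^i$, so only the $\aver{\alpha \nabla u} \cdot \jump{v_h}$ contribution survives and is exactly cancelled by the corresponding term in $a_h$. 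On type (ii), the Dirichlet condition $u = g$ gives $\jump{u} = g \un$, and substituting this into the symmetrization and penalty terms reproduces the two $g$-dependent boundary contributions in $l_h$.

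The step demanding the most care is type (iii). On $\Gamma_K$ both $\jump{u}$ and $\jump{\alpha \nabla u}$ are nonzero, so both halves of the DG identity are active: it produces an $\aver{\alpha \nabla u} \cdot \jump{v_h}$ piece that cancels against $a_h$, together with a $\jump{\alpha \nabla u} \, \aver{v_h} = b \, \aver{v_h}$ piece that matches the $b$-term of $l_h$. Concurrently, substituting $\jump{u} = a \un$ into the remaining interface terms $\aver{\alpha \nabla v_h} \cdot \jump{u}$ and $(\eta/h_K) \jump{u} \cdot \jump{v_h}$ of $a_h$ produces the two $a$-dependent terms of $l_h$. Once all contributions are identified, $a_h(u, v_h) = l_h(v_h)$ follows, and subtracting \eqref{eq_ei_dvariance} concludes. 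The main subtlety throughout is the simultaneous appearance of the primal jump $a\un$ and the flux jump $b$ on $\Gamma_K$, whereas in Lemma \ref{le_Galerkinorth} only the primal-jump half of the DG identity was active on the Dirichlet boundary.
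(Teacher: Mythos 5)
Your proposal is correct and follows essentially the same route as the paper, which simply remarks that the proof of Lemma \ref{le_Galerkinorth} carries over: elementwise integration by parts on $K^0$ and $K^1$, the splitting identity $\bm{q}^+\cdot\un^+v^+ + \bm{q}^-\cdot\un^-v^- = \aver{\bm{q}}\cdot\jump{v}+\jump{\bm{q}}\aver{v}$ on each face piece, and the data relations $\jump{u}=a\un$, $\jump{\alpha\nabla u}=b$ on $\Gamma$ and $u=g$ on $\partial\Omega$ reproduce $l_h(v_h)$ exactly, after which subtracting \eqref{eq_ei_dvariance} gives the orthogonality. Your identification of the extra flux-jump contribution $b\,\aver{v_h}$ on $\Gamma_K$ as the only genuinely new ingredient relative to Lemma \ref{le_Galerkinorth} is exactly right.
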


For $i = 0, 1$, there exists an extension operator $E_i: H^s(\Omega_i)
\rightarrow H^s(\Omega)(s \geq 2)$ \cite{Adams2003sobolev} such that 
\begin{displaymath}
  (E_i w)|_{\Omega_i} = w, \quad \|E_i w \|_{H^q(\Omega)} \leq C \| w
  \|_{H^q(\Omega_i)}, \quad 2 \leq q \leq s.
\end{displaymath}
Then we state the approximation property of the space $V_h^m$. 
\begin{theorem}
  For $0 < h \leq h_0$, there exists a constant $C$ such that 
  \begin{equation}
    \inf_{v_h \in V_h^m} \DGenorm{u - v_h} \leq Ch^m \|u \|_{H^{m+1}(
    \Omega_0 \cup \Omega_1)}, \quad \forall u \in H^{m+1}(\Omega_0
    \cup \Omega_1).
    \label{eq_ei_aperror}
  \end{equation}
  \label{th_ei_aperror}
\end{theorem}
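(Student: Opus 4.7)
The plan is to extend the strategy of Theorem \ref{th_DGaperror} to the two-subdomain setting. First I would lift each restriction $u|_{\Omega_i}$ to a globally defined function $\tilde u_i := E_i(u|_{\Omega_i}) \in H^{m+1}(\Omega)$ using the Sobolev extension operators. Let $I_{h, i}$ denote the Lagrange interpolation operator into $V_{h, i}^{m, \circ}$ (the natural analogue of $I_{h, 0}$ for $i = 1$), and take as candidate
$$
v_h := E_h(I_{h, 0}\tilde u_0, I_{h, 1}\tilde u_1) \in V_h^m.
$$
By the construction \eqref{eq_ei_Eh}, $v_h|_K$ agrees with $I_{h, i}\tilde u_i$ on any interior element $K \in \MThic$, while on a cut element $K \in \MThG$ the restriction $v_h|_{K^i}$ is the polynomial extension $E_{K_i^\circ}(I_{h, i}\tilde u_i)$ from the associated interior neighbour $K_i^\circ$.

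Next I would bound each of the five summands in $\DGenorm{u - v_h}^2$ separately. The interior gradient contribution splits across $K^0$ and $K^1$ and is handled by Lemma \ref{le_Vhoaperror}, applied to $\tilde u_0$ and $\tilde u_1$ respectively; the stability $\|\tilde u_i\|_{H^{m+1}(\Omega)} \leq C\|u\|_{H^{m+1}(\Omega_i)}$ of the Sobolev extensions converts the resulting local ball-norms into a global bound of order $h^{2m}\|u\|_{H^{m+1}(\Omega_0 \cup \Omega_1)}^2$, provided a finite-overlap property of the neighbourhoods $B(\bm{x}_{K_i^\circ}, C_\Delta h_{K_i^\circ})$ is invoked. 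The face terms on $e^0 \cup e^1$ reduce to the argument used in Theorem \ref{th_DGaperror} repeated once per subdomain, since $\jump{u}|_{e^0} = \jump{u}|_{e^1} = 0$ (each piece only involves traces of a function smooth within a single subdomain), so it remains to estimate $\jump{\tilde u_i - v_h}$ on each side by a standard trace inequality followed by Lemma \ref{le_Vhoaperror}.

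The interface contributions on $\Gamma_K$ are the most delicate. According to \eqref{eq_ei_trace}, the jump $\jump{u - v_h} = ((u - v_h)^0 - (u - v_h)^1)\un$ and the average $\aver{\nabla(u - v_h)} = \tfrac12(\nabla(u - v_h)^0 + \nabla(u - v_h)^1)$ decouple into one-sided contributions, so it suffices to control $\|D^q(\tilde u_i - v_h|_{K^i})\|_{L^2(\Gamma_K)}$ for $i = 0, 1$ and $q = 0, 1$. I would apply the trace inequality \eqref{eq_H1trace} on $K$ to the function $\tilde u_i - E_{K_i^\circ}(I_{h, i}\tilde u_i)$, which is well defined on all of $K$ because the polynomial lives on the ball $B(\bm{x}_{K_i^\circ}, C_\Delta h_{K_i^\circ})$ containing $K$, and then invoke Lemma \ref{le_Vhoaperror} to bound the resulting $L^2$ and $H^1$ norms by $h^{m+1}$ and $h^m$ times the ball-norms of $\tilde u_i$. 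Note that the interface condition $\jump{u}|_\Gamma = a\un$ plays no role here, since the energy norm only weighs $\jump{u - v_h}$.

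The main obstacle I anticipate is the bookkeeping on $\Gamma_K$: one must split every jump and average according to \eqref{eq_ei_trace}, track which extended function $\tilde u_i$ and which locally extended polynomial $E_{K_i^\circ}(I_{h, i}\tilde u_i)$ appears on each side, and verify that the local ball-norms $\|\tilde u_i\|_{H^{m+1}(B(\bm{x}_{K_i^\circ}, C_\Delta h_{K_i^\circ}))}$ assemble with bounded multiplicity into $\|u\|_{H^{m+1}(\Omega_i)}$. Once this is in place, the remaining computations are essentially those of Theorem \ref{th_DGaperror} applied twice, and summing the five bounds yields \eqref{eq_ei_aperror}.
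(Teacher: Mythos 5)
Your proposal is correct and follows essentially the same route as the paper, which proves Theorem \ref{th_ei_aperror} by exactly the ingredients you list: the Sobolev extensions $E_i$, the interpolant pushed through $E_h$, Lemma \ref{le_Vhoaperror} for the volume and face terms, and Lemma \ref{le_H1trace} for the $\Gamma_K$ terms, repeating the argument of Theorem \ref{th_DGaperror} once per subdomain. The paper only sketches this in one line, so your filled-in version (including the finite-overlap bookkeeping for the balls $B(\bm{x}_{K_i^\circ}, C_\Delta h_{K_i^\circ})$) is a faithful elaboration rather than a different proof.
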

\begin{proof}
  The estimate \eqref{eq_ei_aperror} is based on the extension
  operators $E_i(i = 0, 1)$ and Lemma \ref{le_H1trace}, and the proof follows from the same line as in
  the proof of Theorem \ref{th_DGaperror}.
\end{proof}

Let us give    {\it a priori} error estimates for the proposed method. 
\begin{theorem}
  Let $u \in H^{m+1}(\Omega_0 \cup \Omega_1)$ be the exact solution to
  \eqref{eq_interfaceproblem} and $u_h \in V_h^m$ be the numerical
  solution to \eqref{eq_ei_dvariance}, and let $a_h(\cdot, \cdot)$ be
  defined as $\eqref{eq_ei_bilinear}$ with a sufficiently large
  $\eta$. Then  for $0 < h \leq h_0$, there exists a constant $C$ such that 
  \begin{equation}
    \DGenorm{u - u_h} \leq C h^m \| u \|_{H^{m+1}(\Omega_0 \cup
    \Omega_1)},
    \label{eq_ei_DGerror}
  \end{equation}
  and 
  \begin{equation}
    \| u - u_h \|_{L^2(\Omega)} \leq C h^{m+1} \|u
    \|_{H^{m+1}(\Omega_0 \cup \Omega_1)}.
    \label{eq_ei_L2error}
  \end{equation}
  \label{th_ei_error}
\end{theorem}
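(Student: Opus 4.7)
The plan is to mirror the Lax--Milgram style argument used for Theorem \ref{th_error}, replacing the ingredients from Section \ref{sec_elliptic} with their interface-problem analogues from the current section.

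For the energy norm estimate \eqref{eq_ei_DGerror}, I would first pick an arbitrary $v_h \in V_h^m$ and apply the coercivity \eqref{eq_ei_coercive}, then insert the Galerkin orthogonality from Lemma \ref{le_ei_Galerkinorth}, namely
\begin{displaymath}
  \DGenorm{u_h - v_h}^2 \leq C\, a_h(u_h - v_h, u_h - v_h) = C\, a_h(u - v_h, u_h - v_h),
\end{displaymath}
and close the argument using the boundedness \eqref{eq_ei_bounded} to get $\DGenorm{u_h-v_h} \leq C\DGenorm{u-v_h}$. A triangle inequality then yields $\DGenorm{u-u_h} \leq C \inf_{v_h \in V_h^m} \DGenorm{u-v_h}$, and Theorem \ref{th_ei_aperror} supplies the rate $Ch^m\|u\|_{H^{m+1}(\Omega_0\cup\Omega_1)}$.

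For the $L^2$ estimate \eqref{eq_ei_L2error}, I would use an Aubin--Nitsche duality argument adapted to the interface setting. Let $\phi$ solve the adjoint interface problem
\begin{displaymath}
  -\nabla\cdot(\alpha\nabla\phi) = u-u_h \ \text{ in } \Omega_0\cup\Omega_1, \quad \phi = 0 \ \text{ on } \partial\Omega, \quad \jump{\phi} = 0, \quad \jump{\alpha\nabla\phi}=0 \ \text{ on } \Gamma,
\end{displaymath}
with the piecewise regularity $\|\phi\|_{H^2(\Omega_0\cup\Omega_1)} \leq C\|u-u_h\|_{L^2(\Omega)}$ (citing Kellogg as the paper already does). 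Because the true jumps vanish, testing with $u-u_h$ and integrating by parts element-by-element recovers $\|u-u_h\|_{L^2(\Omega)}^2 = a_h(\phi, u-u_h)$; Galerkin orthogonality then lets me subtract any discrete function, so I pick $E_h$ applied to the pair of Lagrange interpolants $(\phi_{I,0}, \phi_{I,1})$ of the two extensions $E_0\phi, E_1\phi$. The boundedness \eqref{eq_ei_bounded} and the approximation Theorem \ref{th_ei_aperror} at $m=1$ give $\DGenorm{\phi - E_h(\phi_{I,0},\phi_{I,1})} \leq Ch\|\phi\|_{H^2(\Omega_0\cup\Omega_1)}$, yielding
\begin{displaymath}
  \|u-u_h\|_{L^2(\Omega)}^2 \leq C h \|\phi\|_{H^2(\Omega_0\cup\Omega_1)} \DGenorm{u-u_h} \leq Ch \|u-u_h\|_{L^2(\Omega)}\DGenorm{u-u_h},
\end{displaymath}
and combining with \eqref{eq_ei_DGerror} finishes the proof.

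The main obstacle I anticipate is the duality step: one needs to verify that the adjoint problem really has the $H^2$-piecewise regularity uniformly in how $\Gamma$ cuts the mesh (which is standard for smooth $\Gamma$ and piecewise constant $\alpha$, so the paper's citation covers it), and one needs to carry out the element-wise integration by parts so that every interior face and interface trace term in $a_h(\phi,u-u_h)$ combines with $-\nabla\cdot(\alpha\nabla\phi)$ to reproduce $\|u-u_h\|_{L^2(\Omega)}^2$; this uses $\jump{\phi}=0$, $\jump{\alpha\nabla\phi}=0$ on $\Gamma$ and the continuity of $\phi$ across interior faces to cancel the consistency terms in \eqref{eq_ei_bilinear} exactly as in the proof of Lemma \ref{le_Galerkinorth}.
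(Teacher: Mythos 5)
Your proposal matches the paper's own proof essentially step for step: the energy estimate via coercivity, Galerkin orthogonality, boundedness and Theorem \ref{th_ei_aperror}, and the $L^2$ estimate via the same adjoint interface problem with vanishing jumps, the piecewise $H^2$ regularity bound, and the interpolant of $\phi$ in $V_h^m$. The additional details you flag (the element-wise integration by parts establishing $\|u-u_h\|_{L^2(\Omega)}^2=a_h(\phi,u-u_h)$ and the uniform regularity of the dual problem) are exactly the points the paper leaves implicit, so no gap remains.
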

\begin{proof}
  The estimate  \eqref{eq_ei_DGerror} can be obtained by following the same line as in the proof of  
  \eqref{eq_DGerror} under the Lax-Milgram framework based on Lemma
  \ref{le_ei_bc}, \ref{le_ei_Galerkinorth} and Theorem
  \ref{th_ei_aperror}.  So  we only prove the $L^2$ error estimate
  \eqref{eq_ei_L2error}.  
  
  Let $\phi \in H^2(\Omega_0 \cup \Omega_1)$ be
  the solution to the problem 
  \begin{displaymath}
    \begin{aligned}
      - \nabla \cdot (\alpha \nabla \phi) & = u - u_h, && \text{in }
      \Omega_0 \cup \Omega_1, \\
      \phi & = 0, && \text{on } \partial \Omega, \\
      \jump{\phi} & = 0, && \text{on } \Gamma, \\
      \jump{\alpha \nabla \phi} & = 0, && \text{on  } \Gamma, \\
    \end{aligned}
  \end{displaymath}
  such that $\| \phi \|_{H^2(\Omega_0 \cup \Omega_1)} \leq C \|u - u_h
  \|_{L^2(\Omega)}$. We denote by $\phi_I$ the interpolant of $\phi$
  corresponding to the space $V_h^m$. Thus, it can be seen that
  \begin{displaymath}
    \begin{aligned}
       \| u - u_h \|_{L^2(\Omega)}^2 &= a_h(\phi, u - u_h) = a_h(\phi -
       \phi_I, u - u_h) \\
       & \leq C \DGenorm{\phi - \phi_I} \DGenorm{u - u_h} \\
       & \leq C h \|u - u_h\|_{L^2(\Omega)}  \DGenorm{u - u_h}, \\
    \end{aligned}
  \end{displaymath}
  which gives  \eqref{eq_ei_L2error}, and completes the proof.
\end{proof}
Ultimately, we present the estimate of the condition number for  the discrete system \eqref{eq_ei_dvariance}.
\begin{lemma}
  For $0 < h \leq h_0$, there exists constants $C_1, C_2$ such that 
  \begin{equation}
    C_1 \| v_h \|_{L^2(\MThoc \cup \MThlc)} \leq \DGenorm{v_h} \leq
    C_2 h^{-1} \| v_h \|_{L^2(\MThoc \cup \MThlc)}, \quad \forall v_h
    \in V_h^m.
    \label{eq_ei_L2DGnorm}
  \end{equation}
  \label{le_ei_L2DGnorm}
\end{lemma}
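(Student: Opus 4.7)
The plan is to mirror the proof of Lemma \ref{le_L2DGnorm} (the analogous Poincaré-type estimate in the curved boundary setting), adapting it to the two-subdomain/interface situation by invoking Lemma \ref{le_ei_dtrace} instead of Lemma \ref{le_dtrace} and a transmission-type dual problem instead of a pure Dirichlet one.

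First, I would establish the $L^2$ equivalence
\begin{displaymath}
  C\|v_h\|_{L^2(\Omega)} \leq \|v_h\|_{L^2(\MThoc \cup \MThlc)} \leq \|v_h\|_{L^2(\Omega)},
\end{displaymath}
which is immediate from the inverse estimate $\|v_h\|_{L^2(K^i)} \leq C \|v_h\|_{L^2(K_i^\circ)}$ in Lemma \ref{le_ei_dtrace} applied to each cut element $K \in \MThG$ for $i = 0, 1$. This reduces the task to controlling $\|v_h\|_{L^2(\Omega)}$ by $\DGenorm{v_h}$ on one side, and $\DGenorm{v_h}$ by $h^{-1}\|v_h\|_{L^2(\Omega)}$ on the other.

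For the lower bound, I would run a duality argument with the transmission problem: let $\phi \in H^2(\Omega_0 \cup \Omega_1)$ solve $-\nabla\cdot(\alpha\nabla\phi) = v_h$ in $\Omega_0 \cup \Omega_1$ with $\phi = 0$ on $\partial\Omega$ and homogeneous interface conditions $\jump{\phi} = 0$, $\jump{\alpha \nabla \phi} = 0$ on $\Gamma$, together with the regularity bound $\|\phi\|_{H^2(\Omega_0 \cup \Omega_1)} \leq C\|v_h\|_{L^2(\Omega)}$. Elementwise integration by parts then gives
\begin{displaymath}
  \|v_h\|_{L^2(\Omega)}^2 = \sum_{K \in \MTh} (\alpha\nabla\phi, \nabla v_h)_{L^2(K^0 \cup K^1)} - \sum_{e \in \MEh} (\alpha\nabla\phi, \jump{v_h})_{L^2(e^0\cup e^1)} - \sum_{K\in\MThG}(\alpha\nabla\phi, \jump{v_h})_{L^2(\Gamma_K)},
\end{displaymath}
where the vanishing jumps of $\phi$ and $\alpha\nabla\phi$ across $\Gamma$ and across interior faces are used to write everything in terms of $\jump{v_h}$. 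A Cauchy–Schwarz application in the spirit of the energy norm, combined with Lemma \ref{le_H1trace} applied to $E_0\phi$ and $E_1\phi$ to control the $L^2$ norms of $\nabla\phi$ on the interface pieces $\Gamma_K$ and on faces $e^0, e^1$, yields
\begin{displaymath}
  \|v_h\|_{L^2(\Omega)}^2 \leq C \DGenorm{v_h}\, \|\phi\|_{H^2(\Omega_0 \cup \Omega_1)} \leq C\DGenorm{v_h}\,\|v_h\|_{L^2(\Omega)},
\end{displaymath}
which after combining with the $L^2$ equivalence above gives $C_1\|v_h\|_{L^2(\MThoc \cup \MThlc)} \leq \DGenorm{v_h}$.

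The upper bound is then obtained by directly invoking the inverse and trace estimates of Lemma \ref{le_ei_dtrace} termwise on $\DGenorm{v_h}$: each volume term is bounded by $Ch^{-2}\|v_h\|_{L^2(K^0 \cup K^1)}^2 \leq Ch^{-2}\|v_h\|_{L^2(K_0^\circ \cup K_1^\circ)}^2$, each face trace term by $h_e\|\aver{\nabla v_h}\|^2 + h_e^{-1}\|\jump{v_h}\|^2 \leq Ch^{-2}\|v_h\|_{L^2}^2$ on neighbouring interior elements, and similarly for the $\Gamma_K$-terms using Lemma \ref{le_ei_dtrace} for cut elements and the standard inverse/trace estimates for non-cut elements. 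Summation yields $\DGenorm{v_h}^2 \leq C_2^2 h^{-2}\|v_h\|_{L^2(\MThoc \cup \MThlc)}^2$.

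The main obstacle I expect is bookkeeping in the dual argument: one must carefully use the homogeneous interface conditions on $\phi$ so that only $\jump{v_h}$ (and not $\jump{\phi}$) appears after integration by parts, and then correctly bound $\nabla\phi$ on both the cut face pieces $e^0 \cup e^1$ and the curved interface arcs $\Gamma_K$ by extending $\phi|_{\Omega_i}$ via $E_i$ and invoking Lemma \ref{le_H1trace} on each extension; this is the only place where the interface regularity genuinely enters, and it is the step that determines the constant $C_1$.
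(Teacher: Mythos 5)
Your proposal follows essentially the same route as the paper: the $L^2$ equivalence via the inverse estimate of Lemma \ref{le_ei_dtrace}, a duality argument with a transmission problem and elementwise integration by parts for the lower bound, and termwise inverse/trace estimates for the upper bound. The only (immaterial) deviation is that you pose the dual problem with the coefficient $\alpha$, whereas the paper uses the unit-coefficient Laplacian so that the regularity constant, and hence $C_1$, does not inherit a dependence on the contrast $\alpha_0/\alpha_1$.
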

\begin{proof}
  From Lemma \ref{le_ei_dtrace} and the proof of Lemma \ref{le_L2DGnorm}, we conclude that 
  \begin{displaymath}
    C \| v_h \|_{L^2(\Omega)} \leq \| v_h \|_{L^2(\MThoc \cup
    \MThlc)} \leq \|v_h \|_{L^2(\Omega)}.
  \end{displaymath}
 Let $\phi \in H^2(\Omega_0 \cup
  \Omega_1)$ solve the interface problem 
  \begin{displaymath}
    \begin{aligned}
      - \nabla \cdot \nabla \phi & = v_h, && \text{in }
      \Omega_0 \cup \Omega_1, \\
      \phi & = 0, && \text{on } \partial \Omega, \\
      \jump{\phi} & = 0, && \text{on } \Gamma, \\
      \jump{ \nabla \phi} & = 0, && \text{on  } \Gamma, \\
    \end{aligned}
  \end{displaymath}
  with the regularity $\|\phi \|_{H^2(\Omega)} \leq C
  \|v_h \|_{L^2(\Omega)}$. From the integration by parts, we find that 
  \begin{displaymath}
    \begin{aligned}
      \| &v_h \|_{L^2(\Omega)}^2 = (- \nabla \cdot  \nabla
      \phi, v_h )_{L^2(\Omega)} \\
      &= \sum_{K \in \MTh} (\nabla \phi, \nabla v_h)_{L^2(K^0 \cup
      K^1)} - \sum_{e \in \MEh} (\nabla \phi, \jump{v_h})_{L^2(e^0
      \cup e^1)} - \sum_{K \in \MThG} (\nabla \phi,
      \jump{v_h})_{L^2(\Gamma_K)} \\
      & \leq C \DGenorm{v_h} \left( \sum_{K \in \MTh} \|\nabla \phi
      \|_{L^2(K^0 \cup K^1)}^2 + \sum_{e \in \MEh} h_e \| \nabla \phi
      \|_{L^2(e)}^2 + \sum_{K \in \MThG} h_K \| \nabla \phi
      \|_{L^2(\Gamma_K)}^2 \right)^{1/2}. 
    \end{aligned}
  \end{displaymath}
  From the trace estimate, we have 
  \begin{displaymath}
    \begin{aligned}
      \sum_{e \in \MEh} h_e \| \nabla \phi \|_{L^2(e)}^2 \
      \leq C \| \phi \|_{H^2(\Omega)}^2, \quad \sum_{K \in \MThG} h_K
      \| \nabla \phi \|_{L^2(\Gamma_K)}^2 \leq C \| \phi
      \|_{H^2(\Omega)}^2,
    \end{aligned}
  \end{displaymath}
  which give $\| v_h \|_{L^2(\Omega)} \leq C \DGenorm{v_h}$.
  Moreover, it is easy to verify $\DGenorm{v_h} \leq h^{-1} \|
  v_h \|_{L^2(\Omega)}$. This completes the proof.
\end{proof}
 
\begin{theorem}
  For $0 < h \leq h_0$, there exists a constant $C$ such that 
  \begin{equation}
    \kappa(A) \leq C h^{-2},
    \label{eq_ei_KA}
  \end{equation}
  \label{th_ei_KA}
 where $A$ denotes   the resulting stiff matrix  of the discrete system \eqref{eq_ei_dvariance}. 
\end{theorem}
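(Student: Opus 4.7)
The plan is to imitate the proof of Theorem \ref{th_KA} almost line for line, since every ingredient used there has a direct analogue in the interface setting. Fix the Lagrange basis $\{\phi_i\}_{1 \leq i \leq N}$ of $V_h^m$ obtained by uniting the standard bases of $V_{h,0}^{m,\circ}$ on $\MThoc$ and $V_{h,1}^{m,\circ}$ on $\MThlc$ and then applying $E_h$. Let $A = (a_h(\phi_i,\phi_j))$ be the stiffness matrix, and for a coefficient vector $\bmr{v} \in \mb{R}^N$ let $v_h = \sum_i v_i \phi_i \in V_h^m$. I would factor the Rayleigh quotient as
\[
  \frac{\bmr{v}^T A \bmr{v}}{\bmr{v}^T \bmr{v}} = \frac{a_h(v_h, v_h)}{\|v_h\|_{L^2(\Omega)}^2} \cdot \frac{\|v_h\|_{L^2(\Omega)}^2}{\bmr{v}^T \bmr{v}}
\]
and estimate the two factors separately.

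For the first factor, I would combine the coercivity and boundedness in Lemma \ref{le_ei_bc} with the norm equivalence in Lemma \ref{le_ei_L2DGnorm}, together with the $L^2$-$L^2$ equivalence $\|v_h\|_{L^2(\Omega)} \sim \|v_h\|_{L^2(\MThoc \cup \MThlc)}$ which follows from the inverse estimate of Lemma \ref{le_ei_dtrace} in the same way that \eqref{eq_L2L2L2} was derived in the single-domain case. This yields constants $C_1, C_2$ independent of how $\Gamma$ cuts $\MTh$ such that
\[
  C_1 \|v_h\|_{L^2(\Omega)}^2 \leq a_h(v_h,v_h) \leq C_2 h^{-2} \|v_h\|_{L^2(\Omega)}^2.
\]
For the second factor, a standard element-by-element finite-element scaling argument, applied on each interior element of $\MThoc$ and $\MThlc$ separately, gives $\bmr{v}^T \bmr{v} \sim \|v_h\|_{L^2(\MThoc \cup \MThlc)}^2$; combining with the $L^2$-$L^2$ equivalence just mentioned yields $\bmr{v}^T \bmr{v} \sim \|v_h\|_{L^2(\Omega)}^2$. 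Multiplying the two factors then gives $C_1 \leq \bmr{v}^T A \bmr{v}/(\bmr{v}^T \bmr{v}) \leq C_2 h^{-2}$, which is exactly \eqref{eq_ei_KA}.

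I do not expect any genuine obstacle here. The difficulty that would normally appear in an unfitted setting, namely producing bounds independent of how $\Gamma$ cuts the mesh, has already been absorbed into Lemma \ref{le_ei_L2DGnorm}; after that, everything is routine finite-element book-keeping. The only point that deserves a moment of care is the structural fact that all degrees of freedom of $V_h^m$ sit on interior elements of either $\MThoc$ or $\MThlc$, with none placed on cut elements, so the standard equivalence between $\bmr{v}^T\bmr{v}$ and $\|v_h\|_{L^2(\MThoc \cup \MThlc)}^2$ is inherited directly from the component spaces $V_{h,i}^{m,\circ}$.
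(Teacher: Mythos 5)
Your proposal is correct and follows essentially the same route as the paper, which itself simply refers back to the proof of Theorem \ref{th_KA} and invokes Lemma \ref{le_ei_L2DGnorm} in place of Lemma \ref{le_L2DGnorm}. The factorization of the Rayleigh quotient, the use of coercivity/boundedness together with the norm equivalence, and the observation that the degrees of freedom all lie on uncut elements so that $\bmr{v}^T\bmr{v} \sim \|v_h\|_{L^2(\MThoc \cup \MThlc)}^2$ are exactly the ingredients the paper intends.
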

\begin{proof}
  The estimate \eqref{eq_ei_KA} is a consequence of Lemma
  \ref{le_ei_L2DGnorm}; see the proof of Theorem \ref{th_KA}.
\end{proof}
The unfitted method in Section \ref{sec_elliptic} has been extended to the
interface problem. The used approximation space $V_h^m$ is easily implemented, since its
basis functions come from two common finite element spaces. This method neither requires  any constraint
on how the interface intersects the mesh nor includes any special
stabilization item.

\section{Numerical Results}
\label{sec_numericalresults}
In this section, a series of numerical results are presented to
illustrate the performance of the methods proposed in Sections
\ref{sec_elliptic} and   \ref{sec_interface}. In all tests, the
data functions $g$, $f$ in \eqref{eq_elliptic}, as well as the
functions $g$, $f$, $a$, $b$ in \eqref{eq_interfaceproblem}, are taken
suitably from the exact solution. The boundary or the interface for
each case is described by a level set function $\phi$. We note that
the scheme involves the numerical integration on the intersections of
the boundary/interface with elements. We refer to \cite{Cui2019quadratures,
Saye2015high} for some methods to seek the quadrature rules on the
curved domain, and  the codes are freely available online.

\subsection{Convergence Studies for Elliptic Problems}
We present several numerical examples to demonstrate the convergence
rates of the unfitted method \eqref{eq_dvariation} for the problem
\eqref{eq_elliptic}.  To obtain the approximation space $V_{h, 0}^m$,
the space $V_{h, 0}^{m, \circ}$ is selected to be the standard $C^0$
finite element space. The penalty parameter $\mu$ is taken as 
$\mu=3m^2+10$. 

\paragraph{\textbf{Example 1.}}
In this test,  we set the domain $\Omega_0:=\{(x,y)|\phi(x, y)< 0 \}$
to be a disk  (see Figure \ref{fig_ex1mesh}) with radius $r =
0.7$, that is, $\phi(x, y) = x^2 + y^2 - r^2$.  We take the background
mesh $\MTh$ that partitions the squared domain $\Omega = (-1, 1)^2$
into triangle elements with the mesh size $h = 1/5, \ldots, 1/40$, see
Figure \ref{fig_ex1mesh}.
The exact solution is given as 
\begin{displaymath}
  u(x, y) = \sin(2 \pi x) \sin(4 \pi y).
\end{displaymath}
We solve the discrete problem \eqref{eq_dvariation}   by $V_{h, 0}^{m}$
with   $1 \leq m \leq 3$.
The numerical errors under both the $L^2$ norm and the energy norm are
presented in Table ~\ref{tab_example1}. From the results, the optimal
convergence rates under $\|\cdot\|_{L^2(\Omega_0)}$ and
$\DGnorm{\cdot}$ are observed, which are in perfect agreement with the
theoretical estimates \eqref{eq_DGerror} and \eqref{eq_L2error} for
the 2D case.

\begin{figure}[htp]
  \centering
  \begin{minipage}[t]{0.46\textwidth}
    \centering
    \begin{tikzpicture}[scale=2.5]
      \centering
      \node at (0, 0) {$\Omega_0$};
      \draw[thick, red] (0, 0) circle [radius = 0.7];
      \draw[thick, black] (-1, -1) rectangle (1, 1);
    \end{tikzpicture}
  \end{minipage}
  \begin{minipage}[t]{0.46\textwidth}
    \centering
    \begin{tikzpicture}[scale=2.5]
      \centering
      \input{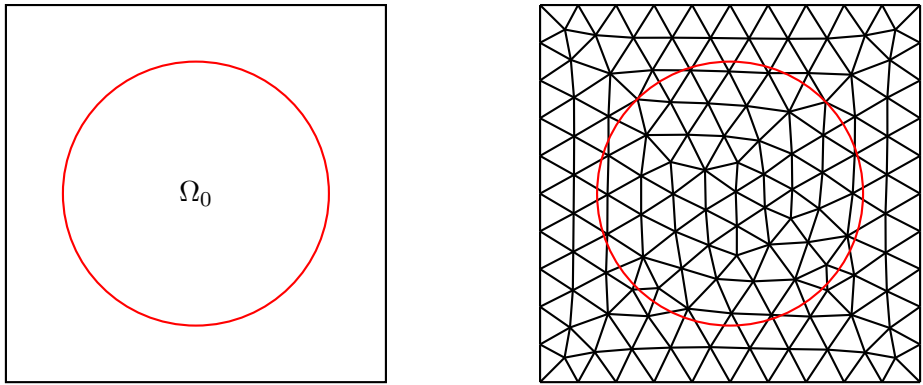}
      \draw[thick, red] (0, 0) circle [radius = 0.7];
    \end{tikzpicture}
  \end{minipage}
  \caption{The curved domain and the partition of Example 1.}
  \label{fig_ex1mesh}
\end{figure}

\begin{table}
  \centering
  \renewcommand\arraystretch{1.3}
  \begin{tabular}{p{0.3cm} | p{2.6cm} | p{1.6cm} | p{1.6cm} |
    p{1.6cm} | p{1.6cm} | p{1cm} }
    \hline\hline
    $m$ & $h$ & 1/5 & 1/10 & 1/20 & 1/40 & order \\
    \hline
    \multirow{2}{*}{$1$} & $\| \bm{u}-\bm{u}_h \|_{L^2(\Omega_0)}$
    & 4.647e-1 & 2.162e-1 & 4.402e-2 & 9.240e-3 & 2.25 \\
    \cline{2-7}
    & $\DGnorm{\bm{u} - \bm{u}_h}$ 
    & 6.868e-0 & 4.438e-0 & 1.992e-0 & 9.065e-2 & 1.13 \\ 
    \hline
    \multirow{2}{*}{$2$} & $\| \bm{u}-\bm{u}_h \|_{L^2(\Omega_0)}$
    & 1.966e-1 & 3.284e-2 & 2.415e-3 & 2.643e-4 & 3.19 \\
    \cline{2-7}
    & $\DGnorm{\bm{u} - \bm{u}_h}$  
    & 4.444e-0 &  1.249e-0 &  2.318e-1 &  4.912e-2 & 2.23 \\ 
    \hline 
    \multirow{2}{*}{$3$} & $\| \bm{u}-\bm{u}_h \|_{L^2(\Omega_0)}$
    & 7.924e-2 & 2.710e-3 &  2.117e-4 & 1.124e-5 & 4.23 \\
    \cline{2-7}
    & $\DGnorm{\bm{u} - \bm{u}_h}$  
    & 1.967e-0 &  1.795e-1 &  2.353e-2 &  2.629e-3 & 3.16 \\
    \hline\hline
  \end{tabular}
  \caption{The numerical errors of  Example 1.}
  \label{tab_example1}
\end{table}

\paragraph{\textbf{Example 2.}}
The second test is to solve the 2D elliptic problem defined on the
flower-like domain \cite{Massing2019stabilized} (see
Figure \ref{fig_ei_ex2mesh}), where $\Omega_0$ is governed by
the level set function $\phi < 0$, where
\begin{displaymath}
  \phi(r, \theta) = r - 0.6 - 0.2\cos(5 \theta), 
\end{displaymath}
with the polar coordinates $(r, \theta)$. The exact solution
\cite{Massing2019stabilized} reads
\begin{displaymath}
  u(x, y) = \cos(2 \pi x) \cos(2 \pi y) + \sin(2 \pi x) \sin(2 \pi y). 
\end{displaymath}
We solve \eqref{eq_dvariation}  on a series of triangular meshes ($h = 1/6, 1/12,
1/24, 1/48$) with   $m = 1, 2, 3$ on the domain $\Omega =
(-1, 1)^2$ (see Figure \ref{fig_ei_ex2mesh}).  The errors under two error
measurements are gathered in Table ~\ref{tab_example2}. For such a
curved domain, our method also demonstrates  that the errors $\|u - u_h
\|_{L^2(\Omega)}$ and $\DGnorm{u - u_h}$ approach zero at the  rates
$O(h^{m+1})$ and $O(h^m)$, respectively, which are well consistent
with the results   in Theorem \ref{th_error}. 

\begin{figure}[htp]
  \centering
  \begin{minipage}[t]{0.46\textwidth}
    \centering
    \begin{tikzpicture}[scale=2.5]
      \centering
      \node at (0, 0) {$\Omega_0$};
      \draw[thick, black] (-1, -1) rectangle (1, 1);
      \draw[thick, domain=0:360, red, samples=120] plot (\x:{(0.6 +
      cos(\x*5)*0.2)*1.02});
    \end{tikzpicture}
  \end{minipage}
  \begin{minipage}[t]{0.46\textwidth}
    \centering
    \begin{tikzpicture}[scale=2.5]
      \centering
      \input{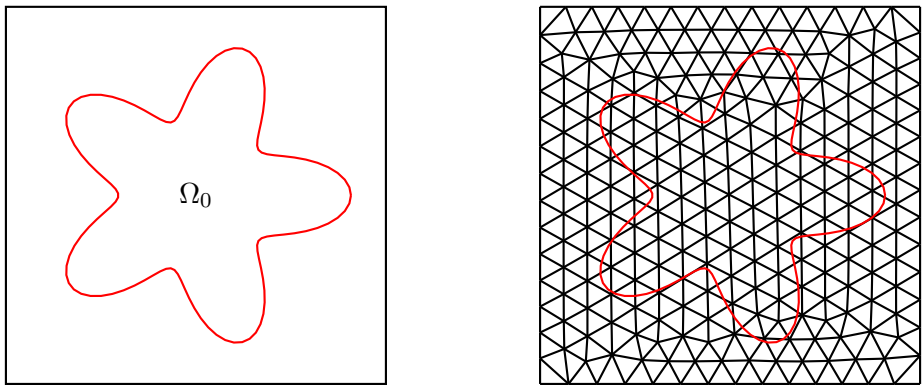}
      \draw[thick, domain=0:360, red, samples=120] plot (\x:{(0.6 +
      cos(\x*5)*0.2)*1.02});
    \end{tikzpicture}
  \end{minipage}
  \caption{The curved domain and the partition of Example 2.}
  \label{fig_ei_ex2mesh}
\end{figure}

\begin{table}
  \centering
  \renewcommand\arraystretch{1.3}
  \begin{tabular}{p{0.3cm} | p{2.6cm} | p{1.6cm} | p{1.6cm} |
    p{1.6cm} | p{1.6cm} | p{1cm} }
    \hline\hline
    $m$ & $h$ & 1/6 & 1/12 & 1/24 & 1/48 & order \\
    \hline
    \multirow{2}{*}{$1$} & $\| \bm{u}-\bm{u}_h \|_{L^2(\Omega_0)}$
    & 7.639e-1 & 2.342e-1 & 6.437e-2 & 1.320e-2 & 2.25 \\
    \cline{2-7} 
    & $\DGnorm{\bm{u} - \bm{u}_h}$ 
    & 7.740e-0 & 4.438e-0 &  1.985e-0 & 9.010e-1 & 1.13 \\ 
    \hline
    \multirow{2}{*}{$2$} & $\| \bm{u}-\bm{u}_h \|_{L^2(\Omega_0)}$
    & 1.702e-1 & 1.657e-2 & 1.785e-3 & 1.643e-4 & 3.44 \\
    \cline{2-7} 
    & $\DGnorm{\bm{u} - \bm{u}_h}$ 
    & 2.434e-0 & 5.163e-1&  9.593e-2 &  1.943e-2 & 2.30 \\
    \hline
    \multirow{2}{*}{$3$} & $\| \bm{u}-\bm{u}_h \|_{L^2(\Omega_0)}$
    & 6.725e-3 & 3.421e-4 & 2.158e-5 &  1.157e-6 & 4.22 \\ 
    \cline{2-7} 
    & $\DGnorm{\bm{u} - \bm{u}_h}$ 
    & 2.800e-1 & 2.649e-2 & 3.079e-3 &  3.128e-4 & 3.29 \\
    \hline\hline
  \end{tabular}
  \caption{The numerical errors of   Example 2.}
  \label{tab_example2}
\end{table}

\paragraph{\textbf{Example 3.}}
In this test, we solve a 3D elliptic problem defined in a spherical
domain $\Omega_0$  (see Figure \ref{fig_ex3interface}), whose corresponding level set function reads
\begin{displaymath}
  \phi(x, y, z) = (x - 0.5)^2 + (y-0.5)^2 + (z - 0.5)^2 - r^2, 
\end{displaymath}
with   radius $r = 0.35$. The exact solution $u$ is chosen as 
\begin{displaymath}
  u(x, y, z) = \cos(\pi x) \cos(\pi y) \cos(\pi z).
\end{displaymath}
We take a series of tetrahedral meshes, with  the mesh size $h = 
1/8, 1/16, 1/32, 1/64$, that cover the domain $\Omega = (0, 1)^3$. The numerical results    in Table~\ref{tab_example3} show
   that the proposed method still has the optimal
convergence rates for the errors $\|u - u_h \|_{L^2(\Omega)}$ and
$\DGnorm{u - u_h}$, respectively, which confirm our theoretical  estimates
\eqref{eq_DGerror} and \eqref{eq_L2error}.

\begin{figure}[htb]
  \centering
  \includegraphics[width=0.35\textwidth, height=0.35\textwidth]{./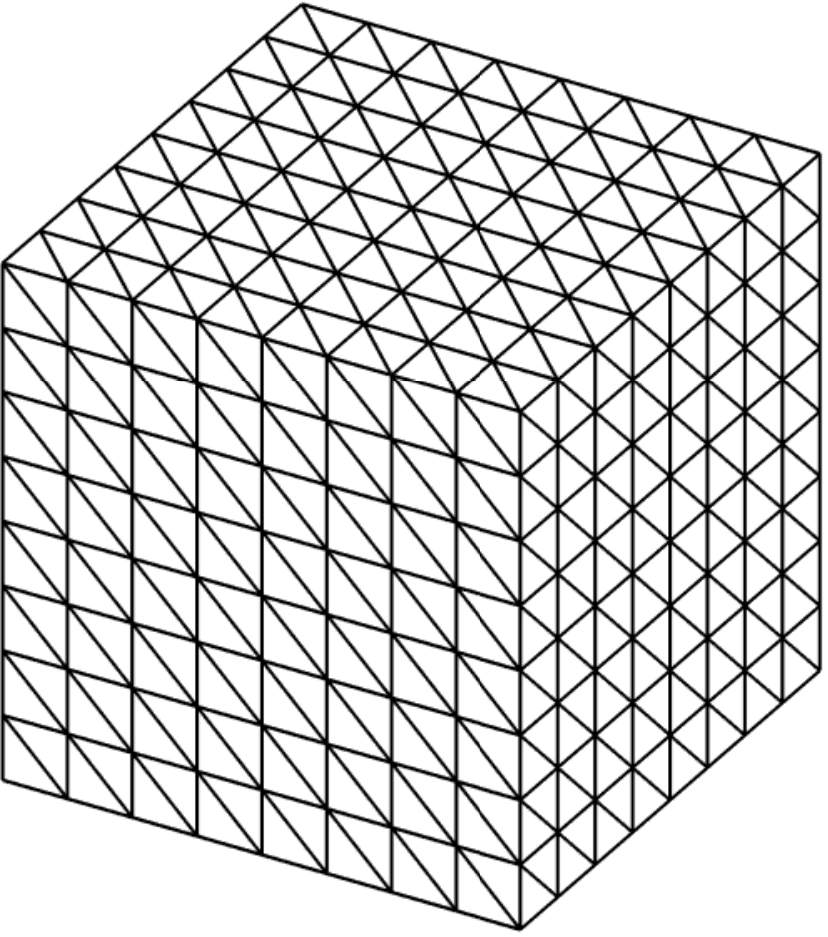}
  \hspace{15pt}
  \includegraphics[width=0.35\textwidth, height=0.35\textwidth]{./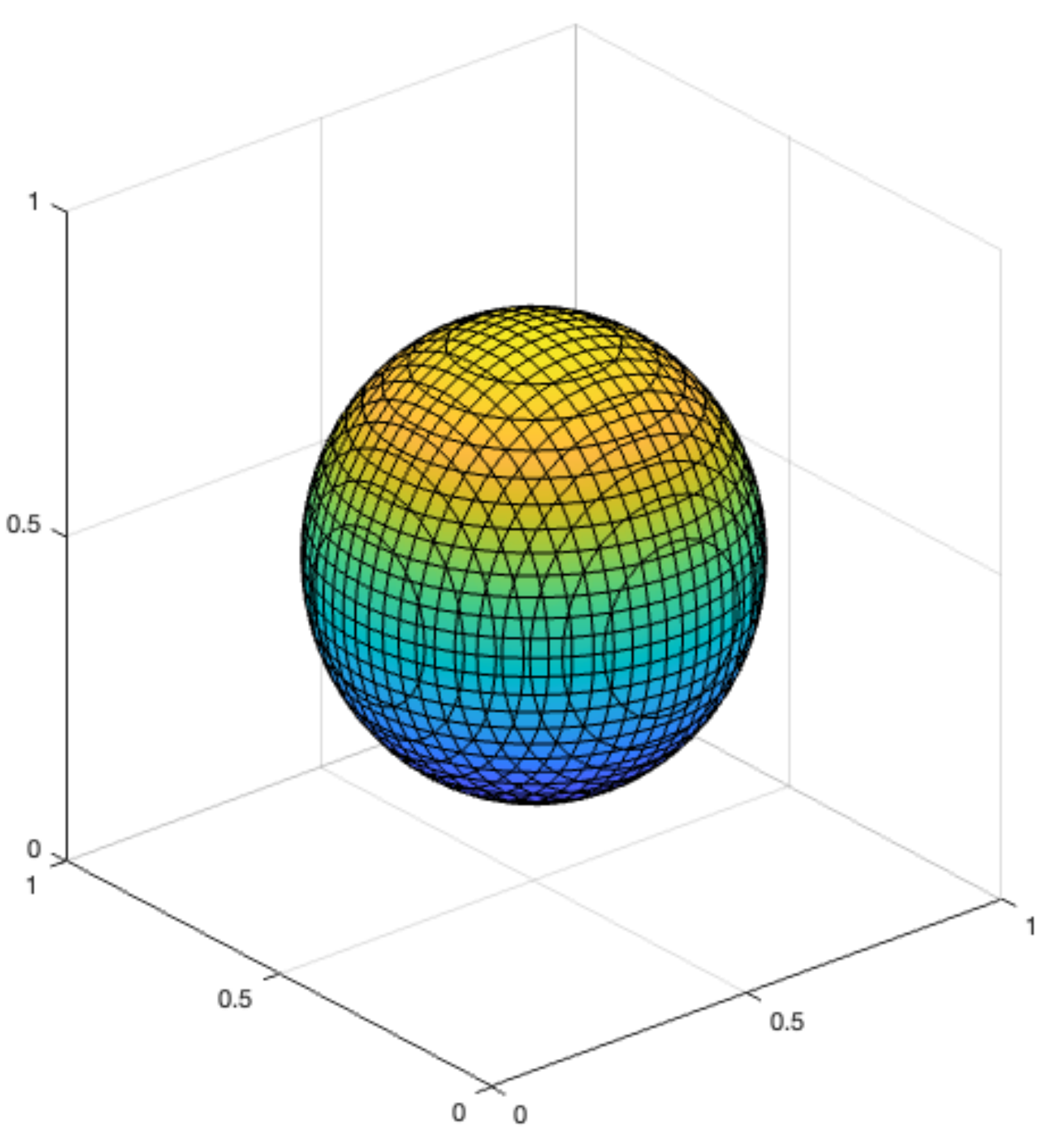}
  \caption{The spherical domain and the tetrahedral mesh of Example 3.}
  \label{fig_ex3interface}
\end{figure}

\begin{table}
  \centering
  \renewcommand\arraystretch{1.3}
  \begin{tabular}{p{0.3cm} | p{2.6cm} | p{1.6cm} | p{1.6cm} |
    p{1.6cm} | p{1.6cm} | p{1cm} }
    \hline\hline
    $m$ & $h$ & 1/8 & 1/16 & 1/32 & 1/64 & order \\
    \hline
    \multirow{2}{*}{$1$} & $\| \bm{u}-\bm{u}_h \|_{L^2(\Omega_0)}$
    & 8.357e-3 & 2.866e-3 & 1.042e-3 & 2.782e-4 & 1.91 \\
    \cline{2-7} 
    & $\DGnorm{\bm{u} - \bm{u}_h}$ 
    & 1.910e-1 & 1.143e-1 &  5.441e-2 & 2.481e-2 & 1.13 \\ 
    \hline
    \multirow{2}{*}{$2$} & $\| \bm{u}-\bm{u}_h \|_{L^2(\Omega_0)}$
    & 1.946e-3 & 8.168e-5 & 7.951e-6 & 7.897e-7 & 3.33 \\
    \cline{2-7} 
    & $\DGnorm{\bm{u} - \bm{u}_h}$ 
    & 5.882e-2 & 8.205e-3&  1.797e-3 &  4.063e-4 & 2.15 \\
    \hline
    \multirow{2}{*}{$3$} & $\| \bm{u}-\bm{u}_h \|_{L^2(\Omega_0)}$
    & 8.379e-5 & 2.828e-6 & 1.348e-7 &  7.699e-9 & 4.13 \\
    \cline{2-7} 
    & $\DGnorm{\bm{u} - \bm{u}_h}$ 
    & 3.793e-3 & 3.260e-4 & 3.357e-5 &  3.602e-6 & 3.22 \\
    \hline\hline
  \end{tabular}
  \caption{The numerical errors of  Example 3.}
  \label{tab_example3}
\end{table}

\subsection{Convergence Studies for Elliptic Interface Problems}
This subsection is devoted to verify the theoretical analysis of the
interface-unfitted scheme \eqref{eq_ei_dvariance}.
The spaces $V_{h, 0}^{m,
\circ}$ and $V_{h, 1}^{m, \circ}$ 
are taken as the $C^0$ finite element spaces. The penalty
parameter $\eta$ is selected as $3m^2+10$.

\paragraph{\textbf{Example 4.}}
This test is a 2D benchmark problem on $\Omega = (-1, 1)^2$
that contains a circular interface (see Figure \ref{fig_ei_eiex1mesh}), 
\begin{displaymath}
  \phi(x, y) = x^2 + y^2 - r^2=0, \quad \forall (x, y) \in (-1, 1)^2,
\end{displaymath}
with   radius $r = 0.5$. The piecewise coefficient $\alpha$ in
\eqref{eq_interfaceproblem} and the exact solution are respectively taken to be 
\begin{displaymath}
  \alpha = \begin{cases}
    b, & \phi(x, y) > 0, \\
    1, & \phi(x, y) < 0, \\
  \end{cases} \quad
  u(x, y) = \begin{cases}
    -\frac{1}{b}\left(  \frac{(x^2 + y^2)^2}{2} + x^2 + y^2
    \right) ,
    & \phi(x, y) > 0, \\
    \sin(2 \pi x) \sin(\pi y),  & \phi(x, y) < 0, \\
  \end{cases}
\end{displaymath}
with $b = 10$.
We adopt triangular meshes with $h
= 1/10, \ldots, 1/80$ with $1 \leq m \leq 3$. Numerical results  are
collected in Table~\ref{tab_example4}. We can observe that the proposed  unfitted method yields $ (m+1)$-th and
$m$-th convergence rates for 
the errors $\| u - u_h \|_{L^2(\Omega)}$ and $\DGenorm{u - u_h}$, respectively. This 
is in accordance with the predicted results in Theorem
\ref{th_ei_error}.

\begin{figure}[htp]
  \centering
  \begin{minipage}[t]{0.46\textwidth}
    \centering
    \begin{tikzpicture}[scale=2.5]
      \centering
      \node at (0, 0) {$\Omega_0$};
      \node at (0.7, 0.7) {$\Omega_1$};
      \draw[thick, black] (-1, -1) rectangle (1, 1);
      \draw[thick, red] (0, 0) circle [radius=0.5];
    \end{tikzpicture}
  \end{minipage}
  \begin{minipage}[t]{0.46\textwidth}
    \centering
    \begin{tikzpicture}[scale=2.5]
      \centering
      \input{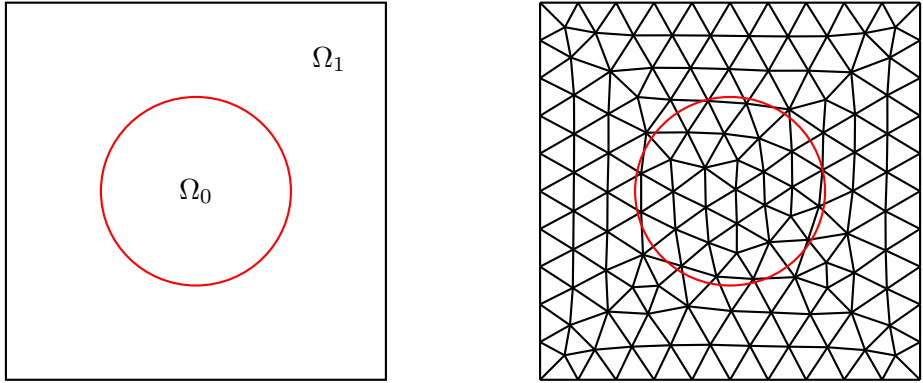}
      \draw[thick, red] (0, 0) circle [radius=0.5];
    \end{tikzpicture}
  \end{minipage}
  \caption{The interface and the partition of Example 4.}
  \label{fig_ei_eiex1mesh}
\end{figure}

\begin{table}
  \centering
  \renewcommand\arraystretch{1.3}
  \begin{tabular}{p{0.3cm} | p{2.6cm} | p{1.6cm} | p{1.6cm} |
    p{1.6cm} | p{1.6cm} | p{1cm} }
    \hline\hline
    $m$ & $h$ & 1/10 & 1/20 & 1/40 & 1/80 & order \\
    \hline
    \multirow{2}{*}{$1$} & $\| \bm{u}-\bm{u}_h \|_{L^2(\Omega)}$
    & 5.258e-2 &  1.558e-2 &  3.081e-3 & 6.888e-4 & 2.16 \\
    \cline{2-7} 
    & $\DGnorm{\bm{u} - \bm{u}_h}$ 
    & 9.881e-1 & 4.747e-1 & 1.953e-1 & 9.087e-2  & 1.10 \\
    \hline
    \multirow{2}{*}{$2$} & $\| \bm{u}-\bm{u}_h \|_{L^2(\Omega)}$
    & 1.055e-3 & 1.446e-4 & 1.130e-5 & 1.208e-6  & 3.23 \\
    \cline{2-7} 
    & $\DGnorm{\bm{u} - \bm{u}_h}$ 
    & 5.895e-2 & 1.450e-2 & 2.788e-3 & 6.590e-4  & 2.08 \\
    \hline
    \multirow{2}{*}{$3$} & $\| \bm{u}-\bm{u}_h \|_{L^2(\Omega)}$
    & 7.192e-5 & 3.885e-6 &  1.639e-7 &  9.199e-9 & 4.15 \\ 
    \cline{2-7} 
    & $\DGnorm{\bm{u} - \bm{u}_h}$ 
    & 5.176e-3 & 5.662e-4 &  5.108e-5 &  5.908e-6 & 3.11 \\ 
    \hline\hline
  \end{tabular}
  \caption{The numerical errors of   Example 4: $b=10$.}
  \label{tab_example4}
\end{table}
Further, we  also test the case,  by choosing $b = 1000$, that the coefficient has a large
jump. The numerical results are shown in
Table~\ref{tab_example4b}. By comparing the errors in
Table \ref{tab_example4} with Table~\ref{tab_example4b},  we demonstrate
the robustness of the proposed method for the problem involving a big
contrast on the interface.

\begin{table}
  \centering
  \renewcommand\arraystretch{1.3}
  \begin{tabular}{p{0.3cm} | p{2.6cm} | p{1.6cm} | p{1.6cm} |
    p{1.6cm} | p{1.6cm} | p{1cm} }
    \hline\hline
    $m$ & $h$ & 1/10 & 1/20 & 1/40 & 1/80 & order \\
    \hline
    \multirow{2}{*}{$1$} & $\| \bm{u}-\bm{u}_h \|_{L^2(\Omega)}$
    & 5.382e-2 &  1.819e-2 & 3.551e-3 & 7.490e-4 & 2.24 \\
    \cline{2-7} 
    & $\DGnorm{\bm{u} - \bm{u}_h}$ 
    & 9.818e-1 & 4.712e-1 &  2.002e-1 &  8.368e-2 & 1.23 \\
    \hline
    \multirow{2}{*}{$2$} & $\| \bm{u}-\bm{u}_h \|_{L^2(\Omega)}$
    & 1.019e-3 &  1.563e-4 & 1.132e-5 & 1.259e-6  & 3.17 \\
    \cline{2-7} 
    & $\DGnorm{\bm{u} - \bm{u}_h}$ 
    & 5.625e-2 & 1.547e-2 & 2.788e-3 & 6.698e-4 & 2.06 \\
    \hline
    \multirow{2}{*}{$3$} & $\| \bm{u}-\bm{u}_h \|_{L^2(\Omega)}$
    & 7.331e-5 & 4.302e-6 & 2.013e-7 &  1.048e-8 & 4.26 \\
    \cline{2-7} 
    & $\DGnorm{\bm{u} - \bm{u}_h}$ 
    & 5.258e-3 & 6.540e-4 & 6.369e-5 & 6.658e-6 & 3.25 \\
    \hline\hline
  \end{tabular}
  \caption{The numerical errors of   Example 4 with a large jump: $b=1000$.}
  \label{tab_example4b}
\end{table}

\paragraph{\textbf{Example 5.}} 
We consider an elliptic interface problem with a star interface
\cite{Zhou2006fictitious} (see Figure \ref{fig_ei_ex5mesh}), where
$\Gamma$ is parametrized with the polar coordinate $(r, \theta)$, 
\begin{displaymath}
  \phi(r, \theta) = r - \frac{1}{2} - \frac{\sin(5 \theta)}{7}.
\end{displaymath}
The domain is $\Omega = (-1, 1)^2$. The coefficient $\alpha$ and the
exact solution are selected to be 
\begin{displaymath}
\alpha = \begin{cases}
    10, & \phi(r, \theta) >0, \\
    1, & \phi(r, \theta) <  0, \\
  \end{cases} \quad
  u(r, \theta) = \begin{cases}
    0.1r^2 - 0.01\ln(2r), & \phi(r, \theta) >0, \\
    e^{r^2}, & \phi(r, \theta) < 0, \\
  \end{cases}   
  \end{displaymath}
  respectively. 
We display the numerical results in
Table~\ref{tab_example5}. Similar as the previous example, the optimal
convergence rates for the errors under the $L^2$ norm and the energy norm can
be still observed. 

\begin{figure}[htp]
  \centering
  \begin{minipage}[t]{0.46\textwidth}
    \centering
    \begin{tikzpicture}[scale=2.5]
      \centering
      \node at (0, 0) {$\Omega_0$};
      \node at (0.7, 0.7) {$\Omega_1$};
      \draw[thick, black] (-1, -1) rectangle (1, 1);
      \draw[thick, domain=0:360, red, samples=120] plot (\x:{(0.5 +
      sin(\x*5)/7)*1.0});
    \end{tikzpicture}
  \end{minipage}
  \begin{minipage}[t]{0.46\textwidth}
    \centering
    \begin{tikzpicture}[scale=2.5]
      \centering
      \input{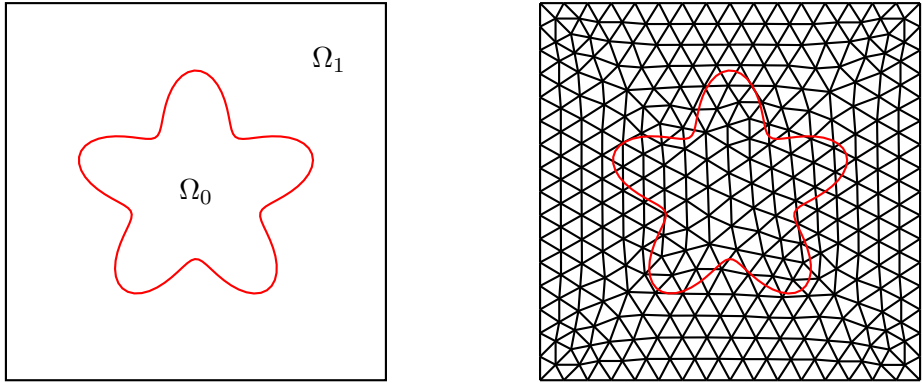}
      \draw[thick, domain=0:360, red, samples=120] plot (\x:{(0.5 +
      sin(\x*5)/7)*1.0});
    \end{tikzpicture}
  \end{minipage}
  \caption{The interface and the partition of Example 5.}
  \label{fig_ei_ex5mesh}
\end{figure}

\begin{table}
  \centering
  \renewcommand\arraystretch{1.3}
  \begin{tabular}{p{0.3cm} | p{2.6cm} | p{1.6cm} | p{1.6cm} |
    p{1.6cm} | p{1.6cm} | p{1cm} }
    \hline\hline
    $m$ & $h$ & 1/8 & 1/16 & 1/32 & 1/64 & order \\
    \hline
    \multirow{2}{*}{$1$} & $\| \bm{u}-\bm{u}_h \|_{L^2(\Omega)}$
    & 9.311e-3 & 2.269e-3 & 4.456e-4 & 9.326e-5 & 2.26 \\
    \cline{2-7} 
    & $\DGnorm{\bm{u} - \bm{u}_h}$ 
    & 1.795e-1 &  8.026e-2 & 3.488e-2 &  1.600e-2 & 1.12 \\ 
    \hline
    \multirow{2}{*}{$2$} & $\| \bm{u}-\bm{u}_h \|_{L^2(\Omega)}$
    & 5.070e-4 & 3.869e-5 & 3.360e-6 & 3.493e-7 & 3.26 \\
    \cline{2-7} 
    & $\DGnorm{\bm{u} - \bm{u}_h}$ 
    & 2.595e-2 & 2.905e-3 & 5.623e-4 & 1.229e-4  & 2.19 \\
    \hline
    \multirow{2}{*}{$3$} & $\| \bm{u}-\bm{u}_h \|_{L^2(\Omega)}$
    & 1.106e-5 &  6.759e-7 & 2.741e-8 & 1.408e-9 & 4.28 \\
    \cline{2-7} 
    & $\DGnorm{\bm{u} - \bm{u}_h}$ 
    & 1.361e-3 & 7.226e-5 & 6.342e-6 & 6.693e-7 & 3.23 \\
    \hline\hline
  \end{tabular}
  \caption{The numerical errors of   Example 5.}
  \label{tab_example5}
\end{table}

\paragraph{\textbf{Example 6.}} In the last example, we consider the
elliptic interface problem \eqref{eq_interfaceproblem} in three
dimensions with the coefficient $\alpha = 1$. The domain is the unit cube $\Omega = (0, 1)^3$ and the
interface is a smooth molecular surface of two atoms (see
Figure \ref{fig_ex6interface}), which is given by the level set
function \cite{Wei2018spatially, Li2018interface}, 
\begin{displaymath}
  \phi(x, y, z) = \left( (2.5(x - 0.5))^2 + (4(y-0.5))^2 + (2.5(z -
  0.5))^2 + 0.6 \right)^2 - 3.5(4(y-0.5))^2 - 0.6.
\end{displaymath}
The exact solution takes the form 
\begin{displaymath}
  u(x, y, z) = \begin{cases}
    e^{2(x + y + z)}, & \phi(x, y, z) > 0, \\
    \sin(2\pi x) \sin(2\pi y) \sin(2\pi z), & \phi(x, y, z) < 0. \\
  \end{cases}
\end{displaymath}
The initial mesh $\MTh$ is taken as
a tetrahedral with $h = 1/4$, and we solve the interface problem on a
series of successively refined meshes (see Figure
\ref{fig_ex3interface}). The convergence histories with
$1 \leq m \leq 3$ are reported in Table~\ref{tab_example6}, which show  that  both errors $\|u -
u_h \|_{L^2(\Omega)}$ and $\DGenorm{u - u_h}$ decrease to zero   at
their optimal convergence rates. This observation again
validates the theoretical predictions in Theorem \ref{th_ei_error}.

\begin{figure}[htb]
  \centering
  \includegraphics[width=0.4\textwidth]{./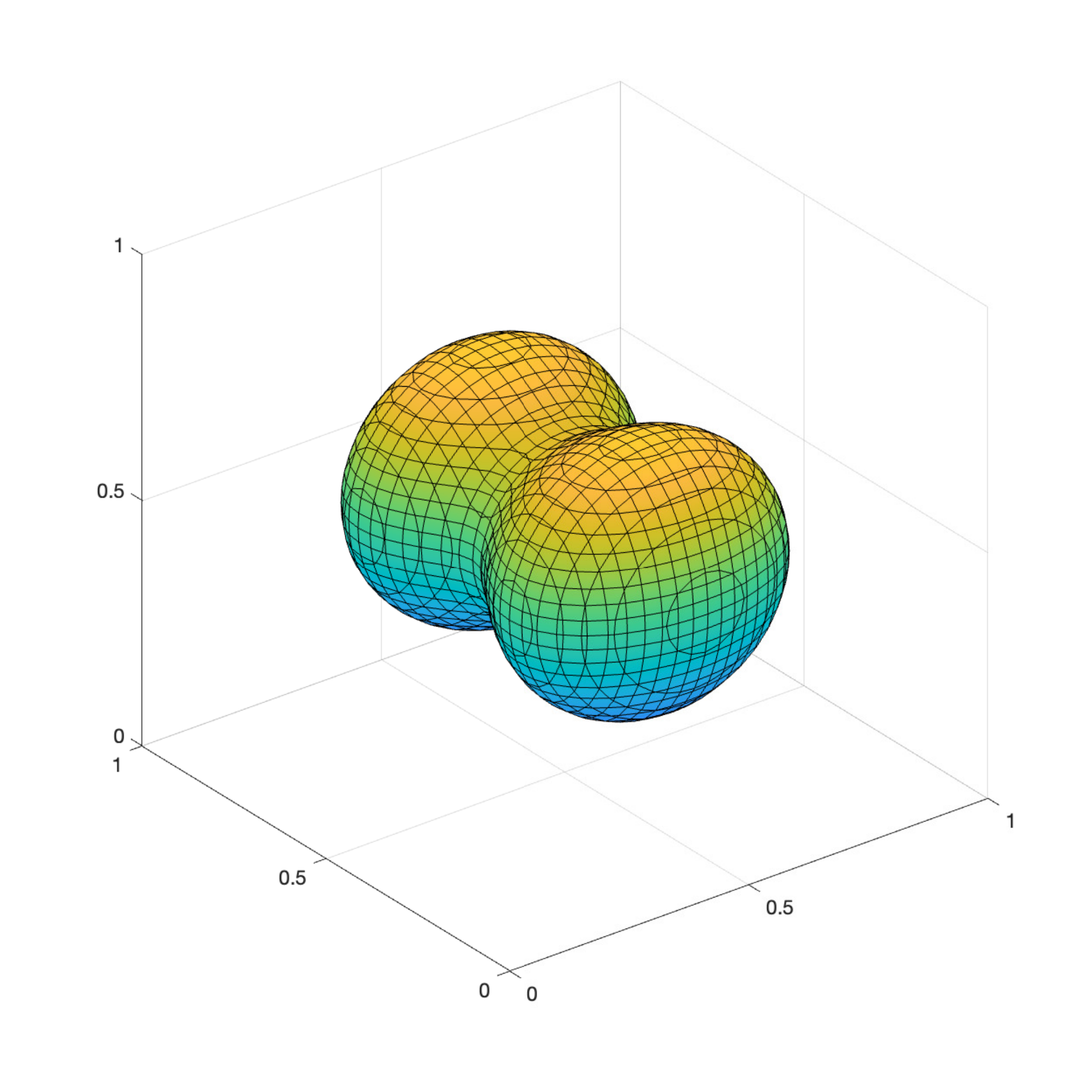}
  \hspace{10pt}
  \includegraphics[width=0.4\textwidth]{./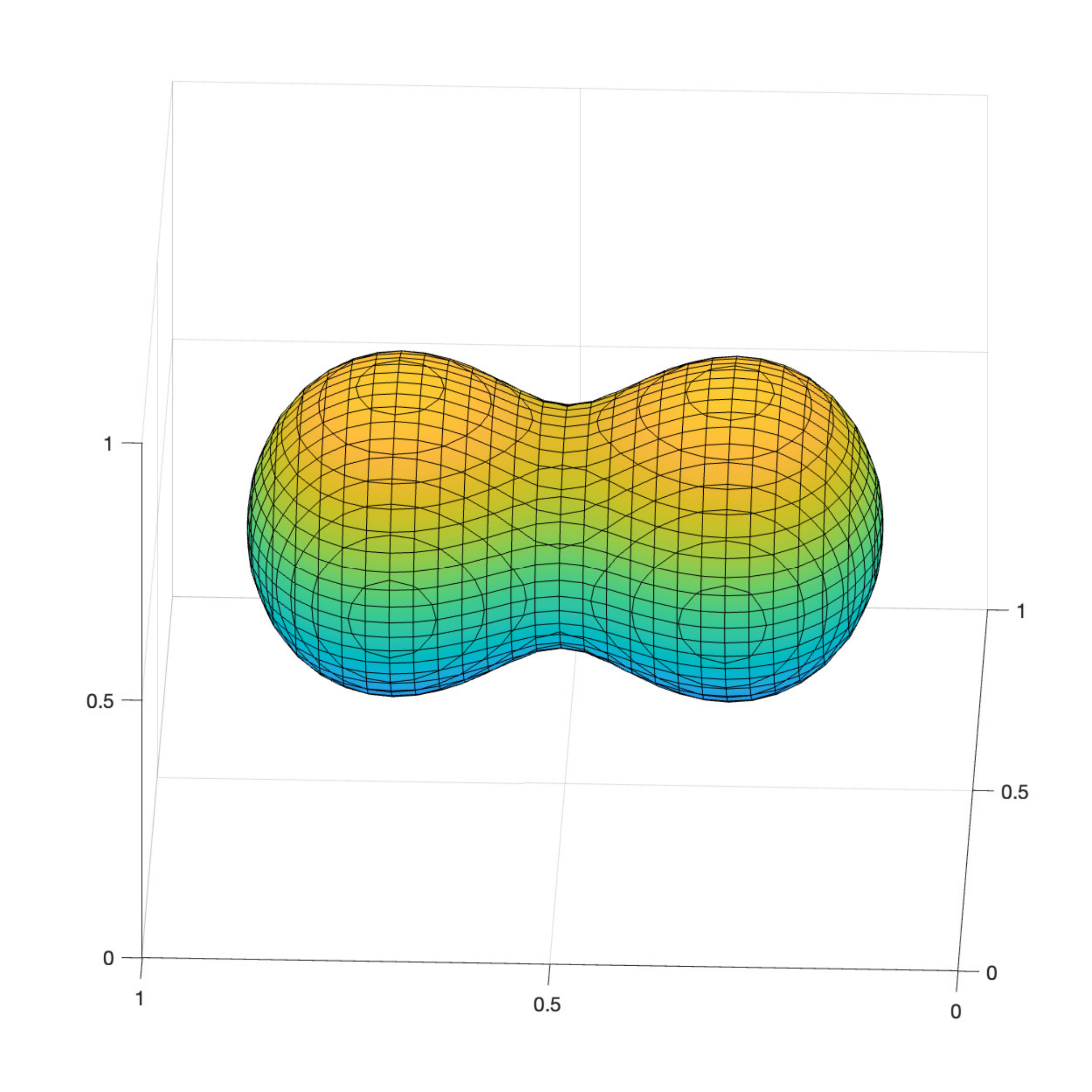}
  \caption{The interface of Example 6.}
  \label{fig_ex6interface}
\end{figure}

\begin{table}
  \centering
  \renewcommand\arraystretch{1.3}
  \begin{tabular}{p{0.3cm} | p{2.6cm} | p{1.6cm} | p{1.6cm} |
    p{1.6cm} | p{1.6cm} | p{1cm} }
    \hline\hline
    $m$ & $h$ & 1/4 & 1/8 & 1/16 & 1/32 & order \\
    \hline
    \multirow{2}{*}{$1$} & $\| \bm{u}-\bm{u}_h \|_{L^2(\Omega)}$
    & 1.705e-0 & 6.021e-1 & 1.260e-1 & 2.829e-2 & 2.15 \\
    \cline{2-7} 
    & $\DGnorm{\bm{u} - \bm{u}_h}$ 
    & 3.656e+1 &  1.783e+1 & 7.547e-0 &  3.675e-2 & 1.03 \\ 
    \hline
    \multirow{2}{*}{$2$} & $\| \bm{u}-\bm{u}_h \|_{L^2(\Omega)}$
    & 3.266e-1 & 2.326e-2 & 1.823e-3 & 1.930e-4 & 3.25 \\
    \cline{2-7} 
    & $\DGnorm{\bm{u} - \bm{u}_h}$ 
    & 4.948e-0 & 9.037e-1 & 1.698e-1 & 3.915e-2  & 2.12 \\
    \hline
    \multirow{2}{*}{$3$} & $\| \bm{u}-\bm{u}_h \|_{L^2(\Omega)}$
    & 3.609e-1 &  1.600e-3 & 7.349e-5 & 4.111e-6 & 4.16 \\
    \cline{2-7} 
    & $\DGnorm{\bm{u} - \bm{u}_h}$ 
    & 3.167e-0 & 7.372e-2 & 9.519e-3 & 1.182e-3 & 3.01 \\
    \hline\hline
  \end{tabular}
  \caption{The numerical errors of the Example 6.}
  \label{tab_example6}
\end{table}

\section{Conclusion}
\label{sec_conclusion}
We have developed unfitted finite element methods for   the elliptic boundary value 
  problem   and the elliptic  interface problem. The degrees of
freedom of the used approximation spaces are totally located in the elements that are not cut by the domain boundary and interface.
 The boundary condition  and the jump condition  
are weakly imposed by Nitsche's method. The stability near the boundary or the
interface does not require any stabilization
technique or any constraint on the mesh.  The optimal
convergence orders under the $L^2$ norm and the energy norm are
proved. In addition, we give  upper bounds of the condition numbers for
the two final linear systems. A series of numerical examples in two and
three dimensions are presented to validate our theoretical results.

\section*{Acknowledgements}
This work was supported   by National Natural Science Foundation of China (11971041, 12171340, 11771312).

\bibliographystyle{amsplain}
\bibliography{ref}

\providecommand{\bysame}{\leavevmode\hbox to3em{\hrulefill}\thinspace}
\providecommand{\MR}{\relax\ifhmode\unskip\space\fi MR }
\providecommand{\MRhref}[2]{%
  \href{http://www.ams.org/mathscinet-getitem?mr=#1}{#2}
}
\providecommand{\href}[2]{#2}
\begin{thebibliography}{10}

\bibitem{Adams2003sobolev}
R.~A. Adams and J.~J.~F. Fournier, \emph{{Sobolev Spaces}}, second ed., Pure
  and Applied Mathematics (Amsterdam), vol. 140, Elsevier/Academic Press,
  Amsterdam, 2003.

\bibitem{Areias2006letter}
P.~M.~A. Areias and T.~Belytschko, \emph{Letter to the editor: {A} comment on
  the article: ``{A} finite element method for the simulation of strong and
  weak discontinuities in solid mechanics'' [{C}omput. {M}ethods {A}ppl.
  {M}ech. {E}ngrg. {\bf 193} (2004), no. 33-35, 3523--3540; mr2075053] by {A}.
  {H}ansbo and {P}. {H}ansbo}, Comput. Methods Appl. Mech. Engrg. \textbf{195}
  (2006), no.~9-12, 1275--1276.

\bibitem{babuska2011Stable}
I.~Babu{\v{s}}ka and U.~Banerjee.
\newblock Stable generalized finite element method ({SGFEM}).
\newblock {\em Computer Methods in Applied Mechanics and Engineering},
  201(1):91--111, 2011.

\bibitem{belytschko2009review}
T.~Belytschko, R.~Gracie, and G.~Ventura.
\newblock A review of extended/generalized finite element methods for material
  modeling.
\newblock {\em Modelling and Simulation in Materials Science and Engineering},
  17(4):043001, 2009.
  
\bibitem{Bordas2017geometrically}
S.~P.~A. Bordas, E.~Burman, M.~G. Larson, and M.~A. Olshanskii (eds.),
  \emph{Geometrically unfitted finite element methods and applications},
  Lecture Notes in Computational Science and Engineering, vol. 121, Springer,
  Cham, 2017, Held January 6--8, 2016.

\bibitem{Brenner2007mathematical}
S.~C. Brenner and L.~R. Scott, \emph{{The Mathematical Theory of Finite Element
  Methods}}, third ed., Texts in Applied Mathematics, vol.~15, Springer, New
  York, 2008.

\bibitem{Burman2010ghost}
E.~Burman, \emph{Ghost penalty}, C. R. Math. Acad. Sci. Paris \textbf{348}
  (2010), no.~21-22, 1217--1220.

\bibitem{Burman2021unfitted}
E.~Burman, M.~Cicuttin, G.~Delay, and A.~Ern, \emph{An unfitted hybrid
  high-order method with cell agglomeration for elliptic interface problems},
  SIAM J. Sci. Comput. \textbf{43} (2021), no.~2, A859--A882.

\bibitem{Burman2015cutfem}
E.~Burman, S.~Claus, P.~Hansbo, M.~G. Larson, and A.~Massing, \emph{Cut{FEM}:
  discretizing geometry and partial differential equations}, Internat. J.
  Numer. Methods Engrg. \textbf{104} (2015), no.~7, 472--501.

\bibitem{Burman2012ficticious}
E.~Burman and P.~Hansbo, \emph{Fictitious domain finite element methods using
  cut elements: {II}. {A} stabilized {N}itsche method}, Appl. Numer. Math.
  \textbf{62} (2012), no.~4, 328--341.

\bibitem{Burman2014fictitious}
\bysame, \emph{Fictitious domain methods using cut elements: {III}. {A}
  stabilized {N}itsche method for {S}tokes' problem}, ESAIM Math. Model. Numer.
  Anal. \textbf{48} (2014), no.~3, 859--874.

\bibitem{Burman2020stable}
E.~Burman, P.~Hansbo, M.~G. Larson, and A.~Massing, \emph{A stable cut finite
  element method for partial differential equations on surfaces: the
  {H}elmholtz-{B}eltrami operator}, Comput. Methods Appl. Mech. Engrg.
  \textbf{362} (2020), 112803, 21.

\bibitem{Cui2019quadratures}
T.~Cui, W.~Leng, H.~Liu, L.~Zhang, and W.~Zheng, \emph{High-order numerical
  quadratures in a tetrahedron with an implicitly defined curved interface},
  ACM Trans. Math. Softw. (2019), to appear.

\bibitem{Prenter2018note}
F.~de~Prenter, C.~Lehrenfeld, and A.~Massing, \emph{A note on the stability
  parameter in {N}itsche's method for unfitted boundary value problems},
  Comput. Math. Appl. \textbf{75} (2018), no.~12, 4322--4336.

\bibitem{Girault1986finite}
V.~Girault and P.-A. Raviart, \emph{Finite element methods for
  {N}avier-{S}tokes equations}, Springer Series in Computational Mathematics,
  vol.~5, Springer-Verlag, Berlin, 1986, Theory and algorithms.

\bibitem{Gurkan2019stabilized}
C.~G\"{u}rkan and A.~Massing, \emph{A stabilized cut discontinuous {G}alerkin
  framework for elliptic boundary value and interface problems}, Comput.
  Methods Appl. Mech. Engrg. \textbf{348} (2019), 466--499.

\bibitem{Massing2019stabilized}
\bysame, \emph{A stabilized cut discontinuous {G}alerkin framework for elliptic
  boundary value and interface problems}, Comput. Methods Appl. Mech. Engrg.
  \textbf{348} (2019), 466--499.

\bibitem{Gurkan2020stabilized}
C.~G\"{u}rkan, S.~Sticko, and A.~Massing, \emph{Stabilized cut discontinuous
  {G}alerkin methods for advection-reaction problems}, SIAM J. Sci. Comput.
  \textbf{42} (2020), no.~5, A2620--A2654.

\bibitem{Hansbo2002unfittedFEM}
A.~Hansbo and P.~Hansbo, \emph{An unfitted finite element method, based on
  {N}itsche's method, for elliptic interface problems}, Comput. Methods Appl.
  Mech. Engrg. \textbf{191} (2002), no.~47-48, 5537--5552.

\bibitem{Hansbo2002discontinuous}
P.~Hansbo and M.~G. Larson, \emph{Discontinuous {G}alerkin methods for
  incompressible and nearly incompressible elasticity by {N}itsche's method},
  Comput. Methods Appl. Mech. Engrg. \textbf{191} (2002), no.~17-18,
  1895--1908.

\bibitem{Hansbo2014cut}
P.~Hansbo, M.~G. Larson, and S.~Zahedi, \emph{A cut finite element method for a
  {S}tokes interface problem}, Appl. Numer. Math. \textbf{85} (2014), 90--114.

\bibitem{HanChenWangXie2019Extended}
Y.~Han, H.~Chen, X.~Wang, and X.~Xie.
\newblock E{X}tended {HDG} methods for second order elliptic interface
  problems.
\newblock {\em Journal of Scientific Computing}, 84(1):22, 2020.

\bibitem{Huang2017unfitted}
P.~Huang, H.~Wu, and Y.~Xiao, \emph{An unfitted interface penalty finite
  element method for elliptic interface problems}, Comput. Methods Appl. Mech.
  Engrg. \textbf{323} (2017), 439--460.

\bibitem{Johansson2013high}
A.~Johansson and M.~G. Larson, \emph{A high order discontinuous {G}alerkin
  {N}itsche method for elliptic problems with fictitious boundary}, Numer.
  Math. \textbf{123} (2013), no.~4, 607--628.

\bibitem{Kellogg1972higher}
R.~B. Kellogg, \emph{Higher order singularities for interface problems}, The
  mathematical foundations of the finite element method with applications to
  partial differential equations ({P}roc. {S}ympos., {U}niv. {M}aryland,
  {B}altimore, {M}d., 1972), 1972, pp.~589--602. \MR{0433926}

\bibitem{Kellogg1976regularity}
R.~B. Kellogg and J.~E. Osborn, \emph{A regularity result for the {S}tokes
  problem in a convex polygon}, J. Functional Analysis \textbf{21} (1976),
  no.~4, 397--431.

\bibitem{Lehrenfeld2016high}
C.~Lehrenfeld, \emph{High order unfitted finite element methods on level set
  domains using isoparametric mappings}, Comput. Methods Appl. Mech. Engrg.
  \textbf{300} (2016), 716--733.

\bibitem{Lehrenfeld2018analysis}
C.~Lehrenfeld and A.~Reusken, \emph{Analysis of a high-order unfitted finite
  element method for elliptic interface problems}, IMA J. Numer. Anal.
  \textbf{38} (2018), no.~3, 1351--1387.

\bibitem{Li2018interface}
R.~Li and F.~Yang, \emph{A discontinuous {G}alerkin method by patch
  reconstruction for elliptic interface problem on unfitted mesh}, SIAM J. Sci.
  Comput. \textbf{42} (2020), no.~2, A1428--A1457.

\bibitem{lizhilin1998immersed}
Z.~Li.
\newblock The immersed interface method using a finite element formulation.
\newblock {\em Applied Numerical Mathematics}, 27(3):253--267, 1998.

\bibitem{lin2015partially}
T.~Lin, Y.~Lin, and X.~Zhang.
\newblock Partially penalized immersed finite element methods for elliptic
  interface problems.
\newblock {\em SIAM Journal on Numerical Analysis}, 53(2):1121--1144, 2015.

\bibitem{Massing2014stabilized}
A.~Massing, M.~G. Larson, A.~Logg, and M.~E. Rognes, \emph{A stabilized
  {N}itsche fictitious domain method for the {S}tokes problem}, J. Sci. Comput.
  \textbf{61} (2014), no.~3, 604--628.

\bibitem{Riviere1999improved}
B.~Rivi\`ere, M.~F. Wheeler, and V.~Girault, \emph{Improved energy estimates
  for interior penalty, constrained and discontinuous {G}alerkin methods for
  elliptic problems. {I}}, Comput. Geosci. \textbf{3} (1999), no.~3-4, 337--360
  (2000).

\bibitem{Saye2015high}
R.~I. Saye, \emph{High-order quadrature methods for implicitly defined surfaces
  and volumes in hyperrectangles}, SIAM J. Sci. Comput. \textbf{37} (2015),
  no.~2, A993--A1019.

\bibitem{strouboulis2000design}
T.~Strouboulis, I.~Babu{\v{s}}ka, and K.~Copps.
\newblock The design and analysis of the generalized finite element method.
\newblock {\em Computer Methods in Applied Mechanics and Engineering},
  181(1-3):43--69, 2000.
  
\bibitem{Thomas2010The}
P.~F. Thomas and B.~Ted.
\newblock The e{X}tended/{G}eneralized finite element method: {A}n overview of
  the method and its applications.
\newblock {\em International Journal for Numerical Methods in Engineering},
  84(3):253--304, 2010.

\bibitem{Wei2018spatially}
Z.~Wei, C.~Li, and S.~Zhao, \emph{A spatially second order alternating
  direction implicit ({ADI}) method for solving three dimensional parabolic
  interface problems}, Comput. Math. Appl. \textbf{75} (2018), no.~6,
  2173--2192.

\bibitem{Wu2012unfitted}
H.~Wu and Y.~Xiao, \emph{An unfitted $hp$-interface penalty finite element
  method for elliptic interface problems}, J. Comput. Math. \textbf{37} (2019),
  no.~3, 316--339.
  

\bibitem{Zhou2006fictitious}
Y.~C. Zhou and G.~W. Wei, \emph{On the fictitious-domain and interpolation
  formulations of the matched interface and boundary ({MIB}) method}, J.
  Comput. Phys. \textbf{219} (2006), no.~1, 228--246.

\end{thebibliography}


\providecommand{\bysame}{\leavevmode\hbox to3em{\hrulefill}\thinspace}
\providecommand{\MR}{\relax\ifhmode\unskip\space\fi MR }
\providecommand{\MRhref}[2]{%
  \href{http://www.ams.org/mathscinet-getitem?mr=#1}{#2}
}
\providecommand{\href}[2]{#2}
\begin{thebibliography}{}

\end{thebibliography}

\end{document}